\theoremstyle{plain}
\newtheorem{theorem}{Theorem}[section]
\newtheorem{lemma}[theorem]{Lemma}
\newtheorem{corollary}[theorem]{Corollary}
\newtheorem{proposition}[theorem]{Proposition}
\theoremstyle{definition}
\newtheorem{definition}[theorem]{Definition}
\newtheorem{remark}[theorem]{Remark}
\DeclareMathOperator{\Real}{Re}
\DeclareMathOperator{\Imag}{Im}
\DeclareMathOperator{\id}{Id}
\DeclareMathOperator{\diff}{d\!}
\DeclarePairedDelimiter{\abs}{\lvert}{\rvert}
\newcommand{\suchthat}{\ifnum\currentgrouptype=16 \mathrel{}\middle|\mathrel{}\else\mid\fi}
\numberwithin{table}{section}
\numberwithin{figure}{section}
\numberwithin{equation}{section}
\begin{document}

\setlist[enumerate, 1]{label={\textnormal{(\alph*)}}, ref={(\alph*)}, leftmargin=0pt, itemindent=*}
\setlist[enumerate, 2]{label={\textnormal{(\roman*)}}, ref={(\roman*)}}
\setlist[description, 1]{leftmargin=0pt, itemindent=*}
\setlist[itemize, 1]{label={\textbullet}, leftmargin=0pt, itemindent=*}

\title{Multiplicity-induced-dominancy for delay-differential equations of retarded type}

\author{Guilherme Mazanti\thanks{Universit\'e Paris-Saclay, CNRS, CentraleSup\'elec, Laboratoire des signaux et syst\`emes, Inria Saclay--Île-de-France, DISCO Team, 91190, Gif-sur-Yvette, France. E-mails: \{first name.last name\}@l2s.centralesupelec.fr}~\thanks{Institut Polytechnique des Sciences Avanc\'ees (IPSA), 63 boulevard de Brandebourg, 94200 Ivry-sur-Seine, France.}, Islam Boussaada\footnotemark[1]~\footnotemark[2], Silviu-Iulian Niculescu\footnotemark[1]}

\maketitle

\begin{abstract}
An important question of ongoing interest for linear time-delay systems is to provide conditions on its parameters guaranteeing exponential stability of solutions. Recent works have explored spectral techniques to show that, for some low-order delay-differential equations of retarded type, spectral values of maximal multiplicity are dominant, and hence determine the asymptotic behavior of the system, a property known as \emph{multiplicity-induced-dominancy}. This work further explores such a property and shows its validity for general linear delay-differential equations of retarded type of arbitrary order including a single delay in the system's representation. More precisely, an interesting link between characteristic functions with a real root of maximal multiplicity and Kummer's confluent hypergeometric functions is exploited. We also provide examples illustrating our main result.
\end{abstract}

\noindent \small \textbf{Keywords.} Time-delay equations, multiplicity-induced-dominancy, stability analysis, confluent hypergeometric functions, spectral methods, root assignment.

\noindent \small \textbf{2020 Mathematics Subject Classification.} 34K20, 34K35, 34K20, 93D15, 33C90.


\hypersetup{pdftitle={}, pdfauthor={Guilherme Mazanti, Islam Boussaada, Silviu-Iulian Niculescu}, pdfkeywords={Time-delay equations, multiplicity-induced-dominancy, stability analysis, confluent hypergeometric functions, spectral methods, root assignment}}

\bigskip

\noindent \textbf{Notation.} In this paper, $\mathbb N^\ast$ denotes the set of positive integers and $\mathbb N = \mathbb N^\ast \cup \{0\}$. The set of all integers is denoted by $\mathbb Z$ and, for $a, b \in \mathbb R$, we denote $\llbracket a, b\rrbracket = [a, b] \cap \mathbb Z$, with the convention that $[a, b] = \emptyset$ if $a > b$. For a complex number $s$, $\Real s$ and $\Imag s$ denote its real and imaginary parts, respectively. Given $k, n \in \mathbb N$ with $k \leq n$, the binomial coefficient $\binom{n}{k}$ is defined as $\binom{n}{k} = \frac{n!}{k! (n-k)!}$ and this notation is extended to $k, n \in \mathbb Z$ by setting $\binom{n}{k} = 0$ when $n < 0$, $k < 0$, or $k > n$. For $I \subset \mathbb R$, we denote the indicator function of $I$ by $\chi_I$.

For sake of simplicity in the formulations, we consider that the indices of rows and columns of matrices start from $0$. More precisely, given $n, m \in \mathbb N^\ast$, an $n \times m$ matrix $A$ is described by its coefficients $a_{ij}$ for integers $i, j$ with $0 \leq i < n$ and $0 \leq j < n$.

\section{Introduction}

This paper addresses the asymptotic behavior of the generic delay differential equation
\begin{equation}
\label{MainSystTime}
y^{(n)}(t) + a_{n-1} y^{(n-1)}(t) + \dotsb + a_0 y(t) + \alpha_{n-1} y^{(n-1)}(t - \tau) + \dotsb + \alpha_0 y(t - \tau) = 0,
\end{equation}
where the unknown function $y$ is real-valued, $n$ is a positive integer, $a_k, \alpha_k \in \mathbb R$ for $k \in \{0, \dotsc, n-1\}$ are constant coefficients, and $\tau > 0$ is a delay. Equation \eqref{MainSystTime} is a delay differential equation of \emph{retarded} type since the derivative of highest order appears only in the non-delayed term $y^{(n)}(t)$ (see, e.g., \cite{Hale1993Introduction} and references therein).

Systems and equations with time-delays have found numerous applications in a wide range of scientific and technological domains, such as in biology, chemistry, economics, physics, or engineering, in which time-delays are often used as simplified models for finite-speed propagation of mass, energy, or information. Due to their applications and the challenging mathematical problems arising in their analysis, they have been extensively studied in the scientific literature by researchers from several fields, in particular since the 1950s and 1960s. We refer to \cite{JMP-RN-89,Michiels2014Stability, Hale1993Introduction, Gu2003Stability, Stepan1989Retarded, Sipahi2011Stability, Bellman1963Differential, Krasovskii1963Stability, Diekmann1995Delay} for more details on time-delay systems and their applications.

Among others, some motivation for considering \eqref{MainSystTime} comes from the study of linear control systems with a delayed feedback under the form
\begin{equation}
\label{MainSystControl}
y^{(n)}(t) + a_{n-1} y^{(n-1)}(t) + \dotsb + a_0 y(t) = u(t - \tau),
\end{equation}
where $u$ is the control input, typically chosen in such a way that \eqref{MainSystControl} behaves in some prescribed manner. In the absence of the delay $\tau$ and if the function $y$ and its first $n-1$ derivatives are instantaneously available for measure, an usual choice is $u(t) = - \alpha_{n-1} y^{(n-1)}(t) - \dotsb - \alpha_0 y(t)$, in which case \eqref{MainSystControl} becomes
\begin{equation}
\label{IfWeHadNoDelays}
y^{(n)}(t) + (a_{n-1} + \alpha_{n-1}) y^{(n-1)}(t) + \dotsb + (a_0 + \alpha_0) y(t) = 0,
\end{equation}
and hence, by a suitable choice of the coefficients $\alpha_0, \dotsc, \alpha_{n-1}$, one may choose the roots of the characteristic equation of \eqref{IfWeHadNoDelays} and hence the asymptotic behavior of its solutions. Such a method, called \emph{pole placement}, does not hold if the model includes a delay. Control systems often operate in the presence of delays, primarily due to the time it takes to acquire the information needed for decision-making, to create control decisions and to execute these decisions \cite{Sipahi2011Stability}. Equation \eqref{MainSystTime} can be seen as the counterpart of \eqref{IfWeHadNoDelays} in the presence of the delay $\tau$.

The stability analysis of time-delay systems has attracted much research effort and is an active field \cite{Gu2003Stability, Michiels2014Stability, Sipahi2011Stability, Cruz1970Stability, Avellar1980Zeros, Datko1977Linear, Hale2003Stability, Henry1974Linear}. One of its main difficulties is that, contrarily to the delay-free situation where Routh--Hurwitz stability criterion is available \cite{Barnett1983Polynomials}, there is no simple known criterion for determining the asymptotic stability of a general linear time-delay system based only on its coefficients and delays. The investigation of conditions on coefficients and delays guaranteeing asymptotic stability of solutions is a question of ongoing interest, see for instance \cite{Gu2003Stability, Ma2019Delay}.

In the absence of delays, stability of linear systems and equations such as \eqref{IfWeHadNoDelays} can be addressed by spectral methods by considering the corresponding characteristic polynomial, whose complex roots determine the asymptotic behavior of solutions of the system. For systems with delays, spectral methods can also be used to understand the asymptotic behavior of solutions by considering the roots of some characteristic function (see, e.g., \cite{Hale1993Introduction, Michiels2014Stability, Bellman1963Differential, Cooke1986Zeroes, Engelborghs2002Stability, Sipahi2011Stability, Stepan1989Retarded, Wright1961Stability}) which, for \eqref{MainSystTime}, is the function $\Delta: \mathbb C \to \mathbb C$ defined for $s \in \mathbb C$ by
\begin{equation}
\label{Delta}
\Delta(s) = s^n + \sum_{k=0}^{n-1} a_k s^k + e^{-s\tau} \sum_{k=0}^{n-1} \alpha_k s^k.
\end{equation}
More precisely, the exponential behavior of solutions of \eqref{MainSystTime} is given by the real number $\gamma_0 = \sup\{\Real s \suchthat s \in \mathbb C,\; \Delta(s) = 0\}$, called the \emph{spectral abscissa} of $\Delta$, in the sense that, for every $\varepsilon > 0$, there exists $C > 0$ such that, for every solution $y$ of \eqref{MainSystTime}, one has $\abs{y(t)} \leq C e^{(\gamma_0 + \varepsilon) t} \max_{\theta \in [-\tau, 0]} \abs{y(\theta)}$ \cite[Chapter~1, Theorem~6.2]{Hale1993Introduction}. Moreover, all solutions of \eqref{MainSystTime} converge exponentially to $0$ if and only if $\gamma_0 < 0$. An important difficulty in the analysis of the asymptotic behavior of \eqref{MainSystTime} is that, contrarily to the situation for \eqref{IfWeHadNoDelays}, the corresponding characteristic function $\Delta$ has infinitely many roots.

The function $\Delta$ is a particular case of a \emph{quasipolynomial}. Quasipolynomials have been extensively studied due to their importance in the spectral analysis of time-delay systems \cite{Gromova1968Asymptotic, Neimark1948Structure, Cebotarev1949RouthHurwitz, Stavskii1968Division, Ronkin1978Quasipolynomials, Kharitonov1991Stability, Boussaada2016Tracking}. The precise definition of a quasipolynomial is recalled in Section~\ref{SecQuasipoly}, in which we also provide some useful classical properties of this class of functions, including the fact that the multiplicity of a root of a quasipolynomial is bounded by some integer, called the \emph{degree} of the quasipolynomial, which corresponds to the number of its free coefficients. In particular, according to Definition~\ref{DefiQuasipoly}, the degree of $\Delta$ is $2n$. Recent works such as \cite{Boussaada2016Tracking, Boussaada2016Characterizing} have provided characterizations of multiple roots of quasipolynomials using approaches based on Birkhoff and Vandermonde matrices, which we briefly present in Section~\ref{SecVander}.

The spectral abscissa of $\Delta$ is related to the notion of \emph{dominant} roots, i.e., roots with the largest real part (see Definition~\ref{DefiDominant}). Generally speaking, dominant roots may not exist for a given function of a complex variable, but they always exist for functions of the form \eqref{Delta}, as a consequence, for instance, of the fact that $\Delta$ has finitely many roots on any vertical strip in the complex plane \cite[Chapter~1, Lemma~4.1]{Hale1993Introduction}. Notice also that exponential stability of \eqref{MainSystTime} is equivalent to the dominant root of $\Delta$ having negative real part.

It turns out that, for characteristic quasipolynomials of some time-delay systems, real roots of maximal multiplicity are often dominant, a property known as \emph{multiplicity-induced-dominancy} (MID for short). This link between maximal multiplicity and dominance has been suggested in \cite[Chapter~III, \S~10; Chapter~IV, \S~6; Chapter~V, \S~7]{Pinney1958Ordinary} after the study of some simple, low-order cases, but without any attempt to address the general case. Up to the authors' knowledge, very few works have considered this question in more details until recently in works such as \cite{Boussaada2016Multiplicity, Boussaada2018Further, Boussaada2020Multiplicity}. MID has been shown to hold, for instance, in the case $n = 1$, proving dominance by introducing a factorization of $\Delta$ in terms of an integral expression when it admits a root of multiplicity $2$ \cite{Boussaada2016Multiplicity}; in the case $n = 2$ and $\alpha_1 = 0$, using also the same factorization technique \cite{Boussaada2018Further}; and in the case $n = 2$ and $\alpha_1 \neq 0$, using Cauchy's argument principle to prove dominance of the multiple root \cite{Boussaada2020Multiplicity}.

Another motivation for studying roots of high multiplicity is that, for delay-free systems, if the spectral abscissa admits a minimizer among a class of polynomials with an affine constraint on their coefficients, then one such minimizer is a polynomial with a single root of maximal multiplicity (see \cite{Blondel2012Explicit, Chen1979Output}). Similar properties have also been obtained for some time-delay systems in \cite{Michiels2002Continuous, Ramirez2016Design, Vanbiervliet2008Nonsmooth}. Hence, the interest in investigating multiple roots does not rely on the multiplicity itself, but rather on its connection with dominance of this root and the corresponding consequences for stability.

The aim of this paper is to extend previous results on multiplicity-induced-dominancy for low-order single-delay systems from \cite{Boussaada2016Multiplicity, Boussaada2018Further, Boussaada2020Multiplicity} to the general setting of linear single-delay differential equations of arbitrary order \eqref{MainSystTime} by exploiting the  integral factorization introduced in \cite{Boussaada2016Multiplicity}. Our main result, Theorem~\ref{MainTheo}, states that, given any $s_0 \in \mathbb R$, there exists a unique choice of $a_0, \dotsc, a_{n-1}, \alpha_0, \dotsc, \alpha_{n-1} \in \mathbb R$ such that $s_0$ is a root of multiplicity $2n$ of $\Delta$, and that, under this choice, $s_0$ is a strictly dominant root of $\Delta$, determining thus the asymptotic behavior of solutions of \eqref{MainSystTime}.

The strategy of our proof of Theorem~\ref{MainTheo} starts by a suitable classical change of variables allowing to treat only the case $s_0 = 0$ and $\tau = 1$ (see, for instance, \cite{Boussaada2020Multiplicity}). The coefficients $a_0, \dotsc, a_{n-1}, \alpha_0, \dotsc, \alpha_{n-1} \in \mathbb R$ ensuring that $0$ is a root of multiplicity $2n$ of $\Delta$ are characterized as solutions of a linear system, which allows to prove their existence as well as uniqueness (note that this characterization can be seen as a particular case of that of \cite{Boussaada2016Characterizing}). The key part of the proof, concerning the dominance of the multiple root at $0$, makes use of a suitable factorization of $\Delta$ in terms of an integral expression that turns out to be a particular confluent hypergeometric function, whose roots have been studied in \cite{Wynn1973Zeros}.

The paper is organized as follows. Section~\ref{SecPrelim} presents some preliminary material on quasipolynomials, functional Birkhoff matrices, confluent hypergeometric functions, and binomial coefficients that shall be of use in the sequel of the paper. The main result of the paper is stated in Section~\ref{SecMainResult}, which also contains some of its consequences. In order to improve the organization and the readability of the paper and since the main ideas of the proof are themselves of independent interest, we expose the detailed proof of the main result in a dedicated section, Section~\ref{SecProof}. Section~\ref{SecFurther} provides some further remarks on the factorization of the characteristic quasipolynomial and an interpretation in terms of Laplace transforms. Finally, Section~\ref{Appli} illustrates the applicability of the main result and presents the P3$\delta$ toolbox, a Python software covering the numerical issues related to the proposed problem.

\section{Preliminaries and prerequisites}
\label{SecPrelim}

This section contains some preliminary results on quasipolynomials (Section~\ref{SecQuasipoly}), functional Birkhoff matrices (Section~\ref{SecVander}), confluent hypergeometric functions (Section~\ref{SecHypergeom}), and binomial coefficients (Section~\ref{SecBinom}) which are used in the sequel of the paper. Before turning to the core of this section, we present the following result on the integral of the product of a polynomial and an exponential, which is rather simple but of crucial importance to prove our main result.

\begin{proposition}
\label{PropIntegralPExp}
Let $d \in \mathbb N$ and $p$ be a polynomial of degree at most $d$. Then, for every $z \in \mathbb C \setminus \{0\}$,
\begin{equation}
\label{IntegralPExp}
\int_0^1 p(t) e^{-zt} \diff t = \sum_{k=0}^d \frac{p^{(k)}(0) - p^{(k)}(1) e^{-z}}{z^{k+1}}.
\end{equation}
\end{proposition}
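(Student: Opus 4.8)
The plan is to prove the identity by a straightforward integration by parts, iterated $d+1$ times. Write $I = \int_0^1 p(t) e^{-zt}\diff t$ and integrate by parts once, differentiating $p$ and integrating $e^{-zt}$ (using $z \neq 0$, so that $e^{-zt}$ has the antiderivative $-e^{-zt}/z$). This produces the boundary term $\frac{p(0) - p(1) e^{-z}}{z}$ plus $\frac1z \int_0^1 p'(t) e^{-zt} \diff t$. Repeating this, after $k$ steps one obtains $\sum_{j=0}^{k-1} \frac{p^{(j)}(0) - p^{(j)}(1) e^{-z}}{z^{j+1}} + \frac{1}{z^k}\int_0^1 p^{(k)}(t) e^{-zt}\diff t$, which is easy to establish by induction on $k$. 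Since $p$ has degree at most $d$, we have $p^{(d+1)} \equiv 0$, so taking $k = d+1$ makes the remaining integral vanish, leaving exactly the claimed sum $\sum_{k=0}^d \frac{p^{(k)}(0) - p^{(k)}(1) e^{-z}}{z^{k+1}}$.

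Alternatively, and perhaps more cleanly for a written proof, one can verify the identity by differentiating the right-hand side, or rather by checking that both sides agree after multiplying through. Concretely, let $F(t) = -e^{-zt}\sum_{k=0}^{d} \frac{p^{(k)}(t)}{z^{k+1}}$ and compute $F'(t)$: the derivative of the exponential factor contributes $e^{-zt}\sum_{k=0}^d \frac{p^{(k)}(t)}{z^k}$, while differentiating the sum contributes $-e^{-zt}\sum_{k=0}^d \frac{p^{(k+1)}(t)}{z^{k+1}} = -e^{-zt}\sum_{k=1}^{d+1}\frac{p^{(k)}(t)}{z^k}$. Adding these, the sum telescopes: all terms with $k \geq 1$ cancel (the $k = d+1$ term vanishes since $p^{(d+1)} \equiv 0$), leaving $F'(t) = e^{-zt} p(t)$. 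Hence $F$ is an antiderivative of the integrand, so $\int_0^1 p(t) e^{-zt}\diff t = F(1) - F(0) = \sum_{k=0}^d \frac{p^{(k)}(0) - p^{(k)}(1) e^{-z}}{z^{k+1}}$, which is exactly \eqref{IntegralPExp}.

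There is essentially no obstacle here: the only mild point to keep track of is the bookkeeping of the telescoping sum and the role of the hypothesis $\deg p \leq d$, which guarantees that the boundary index $d+1$ contributes nothing. The assumption $z \neq 0$ is of course essential since we divide by powers of $z$. I would present the antiderivative argument, since it is the shortest and makes the structure of the formula transparent.
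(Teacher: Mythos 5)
Your proposal is correct. Your first argument (iterated integration by parts with the explicit remainder $\tfrac{1}{z^k}\int_0^1 p^{(k)}(t)e^{-zt}\diff t$, killed at $k=d+1$ because $p^{(d+1)}\equiv 0$) is essentially the paper's proof with the induction reorganized: the paper inducts on the degree $d$, applying the inductive hypothesis to $p'$ after a single integration by parts, whereas you induct on the number of integration-by-parts steps; the bookkeeping is equivalent. Your preferred second argument is a genuinely different (if closely related) route: instead of any induction you exhibit the closed-form antiderivative $F(t) = -e^{-zt}\sum_{k=0}^{d} p^{(k)}(t)/z^{k+1}$ and verify by the telescoping computation that $F'(t) = p(t)e^{-zt}$, after which the formula is just $F(1)-F(0)$; I checked the telescoping and the boundary evaluation, and both are right, with the hypotheses $z\neq 0$ and $\deg p \leq d$ used exactly where you say. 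What the antiderivative check buys is brevity and transparency — the formula is seen at once to be an exact evaluation of the integral — at the cost of pulling $F$ out of a hat; the paper's induction is marginally longer but shows how the expression is generated one integration by parts at a time. Either version would serve the role this proposition plays in the paper (it underlies the factorization of $\widetilde\Delta$ in Lemma~\ref{LemmFactorization}).
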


\begin{proof}
The proof is done by induction on $d$. If $d = 0$, then $p$ is constant and one immediately verifies that \eqref{IntegralPExp} holds. Let now $d \in \mathbb N$ be such that \eqref{IntegralPExp} holds for every polynomial of degree at most $d$ and let $p$ be a polynomial of degree $d+1$. Integrating by parts and using the fact that $p^\prime$ is a polynomial of degree at most $d$, one gets
\begin{align*}
\int_0^1 p(t) e^{-zt} \diff t & = \frac{p(0) - p(1) e^{-z}}{z} + \frac{1}{z} \int_0^1 p^\prime(z) e^{-zt} \diff t \displaybreak[0] \\
& = \frac{p(0) - p(1) e^{-z}}{z} + \frac{1}{z} \sum_{k=0}^d \frac{p^{(k+1)}(0) - p^{(k+1)}(1) e^{-z}}{z^{k+1}} \displaybreak[0] \\
& = \sum_{k=0}^{d+1} \frac{p^{(k)}(0) - p^{(k)}(1) e^{-z}}{z^{k+1}},
\end{align*}
establishing thus \eqref{IntegralPExp} by induction.
\end{proof}

\subsection{Quasipolynomials}
\label{SecQuasipoly}

Let us start by recalling the classical definition of a quasipolynomial and its degree (see, e.g., \cite{Wielonsky2001Rolle, Berenstein1995Complex}).

\begin{definition}
\label{DefiQuasipoly}
A \emph{quasipolynomial} $Q$ is an entire function $Q: \mathbb C \to \mathbb C$ which can be written under the form
\begin{equation}
\label{GenericQuasipolynomial}
Q(s) = \sum_{k = 0}^\ell P_k(s) e^{\lambda_k s},
\end{equation}
where $\ell$ is a nonnegative integer, $\lambda_0, \dotsc, \lambda_\ell$ are pairwise distinct real numbers, and, for $k \in \{0, \dotsc, \ell\}$, $P_k$ is a nonzero polynomial with complex coefficients of degree $d_k \geq 0$. The integer $D = \ell + \sum_{k=0}^\ell d_k$ is called the \emph{degree} of $Q$.
\end{definition}

The main motivation for the study of quasipolynomials is that, when $\lambda_0 = 0$ and $\lambda_k < 0$ for $k \in \{1, \dotsc, \ell\}$ in the above definition, $Q$ is the characteristic function of a linear time-delay system with delays $-\lambda_1, \dotsc, -\lambda_\ell$.

The roots of a quasipolynomial do not change when its coefficients are all multiplied by the same nonzero number, and hence one may always assume, without loss of generality, that one nonzero coefficient of a quasipolynomial is normalized to $1$, such as the coefficient of the term of highest degree in $P_0$, which is the case in \eqref{Delta}, for instance. The degree of a quasipolynomial is the number of the remaining coefficients. Since each polynomial $P_k$ of degree $d_k$ has $d_k+1$ coefficients, the degree is then the sum of these numbers discounting the normalized coefficient, giving rise to the number $D = \sum_{k=0}^\ell (d_k + 1) - 1 = \ell + \sum_{k = 0}^\ell d_k$ from Definition~\ref{DefiQuasipoly}.

Contrarily to the case of polynomials, the degree of a quasipolynomial does not determine the number of its roots, which is infinite except in trivial cases. However, there does exist a link between the degree of a quasipolynomial and the number of its roots in horizontal strips of the complex plane, thanks to a classical result known as \emph{P\'{o}lya--Szeg\H{o} bound} (see \cite[Part Three, Problem~206.2]{Polya1998Problems}), which we state in the next proposition.

\begin{proposition}
\label{PropPolyaSzego}
Let $Q$ be a quasipolynomial of degree $D$ given under the form \eqref{GenericQuasipolynomial}, $\alpha, \beta \in \mathbb R$ be such that $\alpha \leq \beta$, and $\lambda_\delta = \max_{j, k \in \{0, \dotsc, \ell\}} \lambda_j - \lambda_k$. Let $m_{\alpha, \beta}$ denote the number of roots of $Q$ contained in the set $\{s \in \mathbb C \suchthat \alpha \leq \Imag s \leq \beta\}$ counting multiplicities. Then
\[
\frac{\lambda_\delta (\beta - \alpha)}{2 \pi} - D \leq m_{\alpha, \beta} \leq \frac{\lambda_\delta (\beta - \alpha)}{2 \pi} + D.
\]
\end{proposition}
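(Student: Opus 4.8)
The plan is to apply the argument principle to a tall horizontal rectangle and let its width tend to infinity. Write $Q$ as in \eqref{GenericQuasipolynomial} with $\lambda_0 < \lambda_1 < \dotsb < \lambda_\ell$, put $d_k = \deg P_k$ and let $L_k$ denote the leading coefficient of $P_k$, so that $\lambda_\delta = \lambda_\ell - \lambda_0$ and $\sum_{k=0}^\ell (d_k + 1) = D + 1$. A first, routine reduction brings us to the case where neither $\alpha$ nor $\beta$ is the imaginary part of a zero of $Q$: the zero set of $Q$ (a nonzero entire function) is countable, hence so is the set of imaginary parts of its zeros, so one proves the estimate for generic $\alpha', \beta'$ arbitrarily close to $\alpha, \beta$ — taken both just outside and just inside $[\alpha, \beta]$ — and recovers the stated two-sided inequality by sandwiching, the bound for one slightly larger generic strip already providing the finiteness that makes the limiting arguments legitimate.

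Assume henceforth that $\alpha, \beta$ are generic. For $A > 0$ large, $Q$ is dominated by a single exponential term near the vertical lines $\Real s = \pm A$, hence does not vanish there, and the argument principle says that $2\pi\, m_{\alpha, \beta}$ equals the increment of $\arg Q(s)$ as $s$ runs once counterclockwise around $\partial R_A$, where $R_A = \{s \in \mathbb{C} : \abs{\Real s} \le A,\ \alpha \le \Imag s \le \beta\}$; it remains to evaluate the four side contributions as $A \to \infty$. On $\Real s = A$, factoring out $P_\ell(s) e^{\lambda_\ell s}$ gives $Q(A + iy) = P_\ell(A + iy) e^{\lambda_\ell (A + iy)} (1 + o(1))$ uniformly for $y \in [\alpha, \beta]$, since the terms with $k < \ell$ are exponentially smaller; as $\arg P_\ell(A + iy)$ moreover varies by $O(1/A)$ over $y \in [\alpha, \beta]$, the increment of $\arg Q$ along this side tends to $\lambda_\ell(\beta - \alpha)$. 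By symmetry the term $P_0(s) e^{\lambda_0 s}$ dominates on $\Real s = -A$ and the increment there tends to $-\lambda_0(\beta - \alpha)$, so the two vertical sides together contribute $\lambda_\delta(\beta - \alpha) + o(1)$.

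The heart of the matter is the two horizontal sides. Along a horizontal line $\Imag s = c$, the real and imaginary parts of $t \mapsto Q(t + ic)$ are real exponential sums of the form $\sum_{k=0}^\ell q_k(t) e^{\lambda_k t}$ with $\deg q_k \le d_k$, and, by the classical bound on the number of real zeros of such a sum — proved by induction on the number of its terms using Rolle's theorem — each of them has at most $\sum_{k=0}^\ell (d_k + 1) - 1 = D$ real zeros. Consequently $t \mapsto Q(t + ic)$ meets the coordinate axes only finitely often, and since it tends to infinity in fixed directions as $t \to +\infty$ and as $t \to -\infty$ (governed by the dominant terms $L_\ell$ and $L_0$, respectively), the increment of $\arg Q$ along each horizontal side is finite; a careful tally of this increment between consecutive axis crossings bounds the combined contribution of the two horizontal sides. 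Granting that this combined bound is exactly $2\pi D$ in absolute value, collecting the four contributions and letting $A \to \infty$ gives $\abs{2\pi\, m_{\alpha, \beta} - \lambda_\delta(\beta - \alpha)} \le 2\pi D$, which is the assertion.

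I expect the last claim — that the horizontal sides contribute no more than $2\pi D$ in absolute value — to be the main obstacle. A crude count of the at most $D + 1$ sign-constancy intervals of $\Real Q(\cdot + ic)$, on each of which the continuous argument stays within an interval of length $\pi$, only produces a $2\pi(D + 1)$-type bound; extracting the sharp constant $2\pi D$ requires using more delicately the interplay between the real- and imaginary-axis crossings and the definite limiting directions at $t = \pm\infty$. A clean alternative is to establish the estimate first for pure exponential sums $\sum_{j=0}^{D} c_j e^{\mu_j s}$, where it is precisely \cite[Part Three, Problem~206.2]{Polya1998Problems}, and then to deduce the general case by approximating an arbitrary quasipolynomial of degree $D$ locally uniformly by such sums with $\max_j \mu_j - \min_j \mu_j \to \lambda_\delta$ and transferring the count of zeros in the strip via Hurwitz's theorem.
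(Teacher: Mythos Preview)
The paper does not prove this proposition at all: it states the bound as a classical result and simply cites \cite[Part Three, Problem~206.2]{Polya1998Problems}. So there is no ``paper's own proof'' to compare against; any correct argument you supply goes beyond what the paper offers.

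Your outline follows the standard route (argument principle on a wide rectangle, dominant-term asymptotics on the vertical sides, Rolle-type control of the zeros of $\Real Q$ and $\Imag Q$ on the horizontal sides), and the vertical-side computation and the reduction to generic $\alpha,\beta$ are fine. However, as you yourself flag, the argument is not complete: you never establish that the combined horizontal contribution is bounded by $2\pi D$. The crude interval count you describe gives only $(D+1)\pi$ per horizontal side, hence $2\pi(D+1)$ total, which yields the weaker conclusion $\lvert m_{\alpha,\beta} - \lambda_\delta(\beta-\alpha)/2\pi\rvert \le D+1$. Closing that gap is precisely the content of the P\'olya--Szeg\H{o} problem; it requires pairing the two horizontal edges and tracking the net change of argument more carefully (exploiting that the limiting directions at $t\to\pm\infty$ on the two edges are governed by the \emph{same} leading coefficients $L_\ell$ and $L_0$, so there is cancellation). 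Your proposed alternative --- prove it first for pure exponential sums and then pass to quasipolynomials by a limiting/Hurwitz argument --- is viable, but note that it is not an independent proof: it simply invokes the very result the paper cites and adds an approximation layer on top. If you go that route, you should also check that the Hurwitz transfer gives both inequalities (upper and lower) in the limit, which needs a little care on the lower bound side.
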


Given a root $s_0 \in \mathbb C$ of a quasipolynomial $Q$, one immediately obtains, by letting $\beta = \alpha = \Imag s_0$ in the statement of Proposition~\ref{PropPolyaSzego}, the following link between the multiplicity of $s_0$ and the degree of $Q$.

\begin{corollary}
Let $Q$ be a quasipolynomial of degree $D$. Then any root $s_0 \in \mathbb C$ of $Q$ has multiplicity at most $D$.
\end{corollary}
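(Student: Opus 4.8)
The plan is to deduce the statement directly from the Pólya--Szegő bound of Proposition~\ref{PropPolyaSzego}, which already does all the work. Write $Q$ under the form \eqref{GenericQuasipolynomial} and apply Proposition~\ref{PropPolyaSzego} with the degenerate choice $\alpha = \beta = \Imag s_0$. The horizontal strip $\{s \in \mathbb C \suchthat \alpha \leq \Imag s \leq \beta\}$ then collapses to the horizontal line $\{s \in \mathbb C \suchthat \Imag s = \Imag s_0\}$, which of course contains the root $s_0$. Writing $m_{\alpha, \beta}$ for the number of roots of $Q$ on that line counted with multiplicities, the upper inequality in Proposition~\ref{PropPolyaSzego} reads $m_{\alpha, \beta} \leq \frac{\lambda_\delta (\beta - \alpha)}{2 \pi} + D = D$, since $\beta - \alpha = 0$.

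It then remains only to note that the multiplicity of $s_0$ as a root of $Q$ is at most $m_{\alpha, \beta}$: indeed $s_0$ sits on the line $\Imag s = \Imag s_0$, so its multiplicity is one of the nonnegative integers whose sum is the total count $m_{\alpha, \beta}$ of roots of $Q$ on that line. Chaining this with the previous inequality gives that the multiplicity of $s_0$ is at most $D$, which is the assertion. There is no real obstacle here; the only thing to be careful about is the bookkeeping observation that $m_{\alpha, \beta}$ tallies \emph{all} roots on the line with multiplicity and hence in particular dominates the multiplicity of the single root $s_0$.
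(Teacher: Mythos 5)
Your argument is correct and is exactly the paper's own proof: the corollary is obtained by taking $\beta = \alpha = \Imag s_0$ in Proposition~\ref{PropPolyaSzego}, so that $m_{\alpha,\beta} \leq D$, and observing that the multiplicity of $s_0$ is bounded by $m_{\alpha,\beta}$. Nothing to add.
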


Note that, since the quasipolynomial $\Delta$ from \eqref{Delta} has degree $2n$, any of its roots has multiplicity at most $2n$. The main result of this paper, Theorem~\ref{MainTheo}, proves that roots of the quasipolynomial $\Delta$ from \eqref{Delta} with maximal multiplicity are necessarily dominant, in the sense of the following definition.

\begin{definition}
\label{DefiDominant}
Let $Q: \mathbb C \to \mathbb C$ and $s_0 \in \mathbb C$ be such that $Q(s_0) = 0$. We say that $s_0$ is a \emph{dominant} (respectively, \emph{strictly dominant}) root of $Q$ if, for every $s \in \mathbb C \setminus \{s_0\}$ such that $Q(s) = 0$, one has $\Real s \leq \Real s_0$ (respectively, $\Real s < \Real s_0$).
\end{definition}

\subsection{Functional Birkhoff matrices}
\label{SecVander}

As outlined in the introduction, the main problem considered in this paper concerns roots of maximal multiplicity $2n$ of quasipolynomials under the form \eqref{Delta}. More precisely, we are interested in finding conditions on the coefficients of \eqref{MainSystTime} guaranteeing that the corresponding quasipolynomial \eqref{Delta} has a real root of multiplicity $2n$ at some given $s_0 \in \mathbb R$ and studying whether such a root is necessarily dominant. The first part of our goal can be recast as a particular problem of \emph{quasipolynomial interpolation}: given $s_0 \in \mathbb R$, one wishes to find a quasipolynomial $\Delta$ under the form \eqref{Delta} such that $\Delta(s_0) = \Delta^\prime(s_0) = \dotsb = \Delta^{(2n-1)}(s_0) = 0$. In this section, we discuss these kinds of interpolation problems and their links with the functional Birkhoff and Vandermonde structures, which have been exploited on recent works on time-delay systems such as \cite{Boussaada2016Characterizing, Boussaada2016Tracking}.

Vandermonde matrices appear naturally in classical problems of \emph{polynomial interpolation}, in which one is given pairwise distinct values $x_1, \dotsc, x_n \in \mathbb R$, corresponding values $y_1, \dotsc, y_n \in \mathbb R$, and one searches for a polynomial $P$ such that $P(x_i) = y_i$ for every $i \in \llbracket 1, n\rrbracket$. A first generalization of this polynomial interpolation problem is Hermite interpolation, in which one is given pairwise distinct values $x_1, \dotsc, x_n \in \mathbb R$, a nonnegative integer $r$, and values $(y_i^j)_{i \in \llbracket 1, n\rrbracket,\, j \in \llbracket 0, r\rrbracket}$, and one searches for a polynomial $P$ such that $P^{(j)}(x_i) = y_i^j$ for every $i \in \llbracket 1, n\rrbracket$ and $j \in \llbracket 0, r\rrbracket$, i.e., one wishes to match not only values but also all derivatives up to order $r$ at the given points $x_1, \dotsc, x_n$.

The \emph{Birkhoff interpolation problem} (see, e.g, \cite{Lorentz1971Birkhoff}) can be seen as a ``lacunary'' version of the Hermite interpolation problem, in the sense that, for each $x_i$, one does not necessarily want to match all derivatives up to order $r$, but only some derivatives. More precisely, given pairwise distinct values $x_1, \dotsc, x_n$, a matrix under the form
\begin{equation*}
\mathcal{E} = \begin{pmatrix}
e_{1,0} & \cdots & e_{1,r} \\
\vdots & \ddots & \vdots \\
e_{n,0} & \cdots & e_{n,r} 
\end{pmatrix},
\end{equation*}
such that $e_{i, j} \in \{0, 1\}$ for every $i \in \llbracket 1, n\rrbracket$ and $j \in \llbracket 0, r\rrbracket$ (called the \emph{incidence matrix}), and values $y_i^j$ for every $i \in \llbracket 1, n\rrbracket$ and $j \in \llbracket 0, r\rrbracket$ such that $e_{i, j} = 1$, the \emph{Birkhoff interpolation problem} is the problem of finding a polynomial $P$ with degree at most $D = \sum_{i=1}^n \sum_{j=0}^r e_{i, j} - 1$ such that $P^{(j)}(x_i) = y_i^j$ for every $i \in \llbracket 1, n\rrbracket$ and $j \in \llbracket 0, r\rrbracket$ such that $e_{i, j} = 1$. Solving a Birkhoff interpolation problem amounts to solving a linear system on the coefficients of $P$, determined by a matrix $\Upsilon$ called the \emph{Birkhoff matrix}.

Contrarily to other classical interpolation problems, Birkhoff interpolation problems may fail to have a solution or present multiple solutions, and this turns out to be highly dependent on the incidence matrix $\mathcal E$. An incidence matrix $\mathcal E$ is said to be \emph{poised} if, for all pairwise distinct $x_1, \dotsc, x_n$ and all $y_i^j \in \mathbb R$ for $i \in \llbracket 1, n\rrbracket$ and $j \in \llbracket 0, r\rrbracket$ such that $e_{i, j} = 1$, the corresponding Birkhoff interpolation problem admits a unique solution. Characterizations of poised incidence matrices are known for interpolation problems of low degrees, but the general case remains an open question (see, e.g., \cite{GonzalezVega1996Applying, Rouillier2001Solving}).

Polynomial interpolation problems are known to be useful in the control of dynamical systems, as illustrated in \cite[p.\ 121]{Kailath1980Linear}, where the controllability of a finite-dimensional dynamical system is characterized in terms of the invertibility of a Vandermonde matrix (see also \cite{Ha1980Note}). They may also appear in the study of some time-delay systems, as illustrated, for instance, in \cite{Niculescu2004Stabilizing, Michiels2014Stability} for the control of a chain of integrators by multiple delays. However, for problems such as pole placement of time-delay system, it is natural to consider interpolation problems involving quasipolynomials. It turns out that quasipolynomial interpolation problems of Birkhoff type have a quite similar structure to polynomial Birkhoff interpolation, but the corresponding Birkhoff matrix turns out to have a functional structure, similar to the functional confluent Vandermonde matrices introduced in \cite{Ha1980Note}.

This problem has been addressed in \cite{Boussaada2016Characterizing} and can be described in the following way. Let $x_1, \dotsc, x_n \in \mathbb R$ be pairwise distinct and, for $i \in \llbracket 1, n\rrbracket$, take $d_i \in \mathbb N^\ast$, which represents the number of derivatives at the point $x_i$ that one wishes to match. Assume that these derivatives are of orders $k_{i, 1}, \dotsc, k_{i, d_i} \in \mathbb N$, $i \in \llbracket 1, n\rrbracket$. Let $\delta = \sum_{i=1}^n d_i$ and let $\varphi: \mathbb R \to \mathbb R$ be a smooth function. The \emph{functional Birkhoff matrix} associated with the interpolation problem at the points $x_1, \dotsc, x_n$ with orders $k_{i, 1}, \dotsc, k_{i, d_i}$, $i \in \llbracket 1, n\rrbracket$, and with the function $\varphi$ is the matrix $\Upsilon \in \mathcal M_{\delta}(\mathbb R)$ given by
\[
\Upsilon = \begin{pmatrix}
\Upsilon^1 & \cdots & \Upsilon^n
\end{pmatrix},
\]
where, for $i \in \llbracket 1, n\rrbracket$,
\[
\Upsilon^i = \begin{pmatrix}
\kappa^{(k_{i, 1})}(x_i) & \cdots & \kappa^{(k_{i, d_i})}(x_i)
\end{pmatrix} \in \mathcal M_{\delta, d_i}(\mathbb R)
\]
and the function $\kappa: \mathbb R \to \mathbb R^\delta$ is given by
\[
\kappa(x) = \varphi(x) \begin{pmatrix}1 \\ x \\ \vdots \\ x^{\delta-1}\end{pmatrix}.
\]

Functional Birkhoff matrices, whose name reflect the presence of the smooth function $\varphi$, are the matrices describing the linear system solved by coefficients of the interpolating function in generalized interpolation problems. For polynomial interpolation, $\varphi$ is the constant function equal to $1$ and $\Upsilon$ reduces to a classical Birkhoff matrix. For quasipolynomial interpolation, as described in \cite{Boussaada2016Characterizing}, $\varphi$ can be taken as a power function, $\varphi(x) = x^s$, where $s-1$ is the order of the non-delayed part of the system. Similarly to Birkhoff polynomial interpolation, the non-degeneracy of functional Birkhoff matrices represent a fundamental assumption for investigating the algebraic multiplicity of zero as spectral value for a generic time-delay system. We refer the interested reader to \cite{Boussaada2016Characterizing} for further details.

\subsection{Confluent hypergeometric functions}
\label{SecHypergeom}

As it will be proved in Section~\ref{SecDominance}, when the quasipolynomial $\Delta$ from \eqref{Delta} admits a root of maximal multiplicity $2n$, it can be factorized in terms of a confluent hypergeometric function. This family of special functions has been extensively studied in the literature (see, e.g., \cite{Buchholz1969Confluent}, \cite[Chapter~VI]{Erdelyi1981Higher}, \cite[Chapter~13]{Olver2010NIST}). This section provides a brief presentation of the results that shall be of use in the sequel. We start by recalling the definition of Kummer's confluent hypergeometric functions used in this paper.

\begin{definition}
\label{DefiKummer}
Let $a, b \in \mathbb C$ and assume that $b$ is not a nonpositive integer. \emph{Kummer's confluent hypergeometric function} $M(a, b, \cdot): \mathbb C \to \mathbb C$ is the entire function defined for $z \in \mathbb C$ by the series
\begin{equation}
\label{DefiConfluent}
M(a, b, z) = \sum_{k=0}^{\infty} \frac{(a)_k}{(b)_k} \frac{z^k}{k!},
\end{equation}
where, for $\alpha \in \mathbb C$ and $k \in \mathbb N$, $(\alpha)_k$ is the \emph{Pochhammer symbol} for the \emph{ascending factorial}, defined inductively as $(\alpha)_0 = 1$ and $(\alpha)_{k+1} = (\alpha+k) (\alpha)_k$ for $k \in \mathbb N$.
\end{definition}

\begin{remark}
Note that the series in \eqref{DefiConfluent} converges for every $z \in \mathbb C$. As presented in \cite{Buchholz1969Confluent, Erdelyi1981Higher, Olver2010NIST}, the function $M(a, b, \cdot)$ satisfies \emph{Kummer's differential equation}
\begin{equation}
\label{KummerODE}
z \frac{\partial^2 M}{\partial z^2}(a, b, z) + (b - z) \frac{\partial M}{\partial z}(a, b, z) - a M(a, b, z) = 0.
\end{equation}
Other solutions of \eqref{KummerODE} are usually also called Kummer's confluent hypergeometric functions, but they shall not be used in this paper.
\end{remark}

We shall need the following classical integral representation of $M$, which can be found, for instance, in \cite{Buchholz1969Confluent, Erdelyi1981Higher, Olver2010NIST}.

\begin{proposition}
Let $a, b \in \mathbb C$ and assume that $\Real b > \Real a > 0$. Then
\begin{equation}
\label{EqKummerIntegral}
M(a, b, z) = \frac{\Gamma(b)}{\Gamma(a) \Gamma(b - a)} \int_0^1 e^{zt} t^{a-1} (1-t)^{b-a-1} \diff t,
\end{equation}
where $\Gamma$ denotes the Gamma function.
\end{proposition}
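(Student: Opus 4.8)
The plan is to start from the right-hand side of \eqref{EqKummerIntegral}, expand the exponential $e^{zt}$ as a power series in $t$, integrate term by term, and recognize the resulting integrals as values of the Euler Beta function. First I would fix $z \in \mathbb C$ and note that, since $\sum_{k=0}^\infty \frac{(zt)^k}{k!}$ converges to $e^{zt}$ uniformly for $t \in [0, 1]$ (on this compact set the partial sums and the limit are dominated by the convergent numerical series $\sum_{k} \frac{\abs{z}^k}{k!}$), one may exchange the summation with the integral against the fixed weight $t \mapsto t^{a-1}(1-t)^{b-a-1}$, which lies in $L^1(0,1)$ precisely because $\Real a > 0$ and $\Real(b-a) > 0$. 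This yields
\begin{equation*}
\frac{\Gamma(b)}{\Gamma(a)\Gamma(b-a)} \int_0^1 e^{zt} t^{a-1}(1-t)^{b-a-1} \diff t = \frac{\Gamma(b)}{\Gamma(a)\Gamma(b-a)} \sum_{k=0}^\infty \frac{z^k}{k!} \int_0^1 t^{a+k-1}(1-t)^{b-a-1} \diff t.
\end{equation*}

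Next I would invoke the classical identity $\int_0^1 t^{p-1}(1-t)^{q-1} \diff t = \frac{\Gamma(p)\Gamma(q)}{\Gamma(p+q)}$, valid whenever $\Real p > 0$ and $\Real q > 0$, applied with $p = a + k$ (so that $\Real p = \Real a + k > 0$) and $q = b - a$. Substituting, the general term of the series becomes
\begin{equation*}
\frac{\Gamma(b)}{\Gamma(a)\Gamma(b-a)} \cdot \frac{\Gamma(a+k)\Gamma(b-a)}{\Gamma(b+k)} = \frac{\Gamma(a+k)}{\Gamma(a)} \cdot \frac{\Gamma(b)}{\Gamma(b+k)},
\end{equation*}
and then the functional equation of the Gamma function in the form $\Gamma(\alpha + k) = (\alpha)_k \Gamma(\alpha)$ for $k \in \mathbb N$ and $\alpha$ not a nonpositive integer — which follows at once by induction on $k$ from $\Gamma(\alpha+1) = \alpha\Gamma(\alpha)$ and the inductive definition of the Pochhammer symbol given in Definition~\ref{DefiKummer} — shows this equals $\frac{(a)_k}{(b)_k}$. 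Summing over $k$ against the factor $\frac{z^k}{k!}$ reproduces exactly the series \eqref{DefiConfluent} defining $M(a, b, z)$, which finishes the proof.

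The argument is essentially routine; the only point deserving care is the interchange of summation and integration, which is clean here thanks to the uniform convergence of the exponential series on the compact interval $[0, 1]$ combined with the integrability of the weight $t^{a-1}(1-t)^{b-a-1}$ guaranteed by the hypothesis $\Real b > \Real a > 0$. Note in particular that this justification works for every $z \in \mathbb C$ at once, so no separate analytic-continuation step is required, and that the two uses of the hypotheses ($L^1$ of the weight and positivity of $\Real p$ in the Beta integral) are exactly the same condition.
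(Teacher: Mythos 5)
Your proof is correct. The paper itself does not prove this proposition --- it is quoted as a classical result with a pointer to the references \cite{Buchholz1969Confluent, Erdelyi1981Higher, Olver2010NIST} --- so there is no in-paper argument to compare against; what you give is the standard derivation found in those references: termwise integration of the exponential series against the weight $t^{a-1}(1-t)^{b-a-1}$, evaluation of each term by the Euler Beta integral $\int_0^1 t^{a+k-1}(1-t)^{b-a-1}\diff t = \Gamma(a+k)\Gamma(b-a)/\Gamma(b+k)$, and the identity $\Gamma(\alpha+k) = (\alpha)_k \Gamma(\alpha)$, which together reproduce the series \eqref{DefiConfluent}. Your justification of the interchange of sum and integral (uniform convergence of the exponential series on $[0,1]$ paired with the $L^1$ bound on the weight, the latter being exactly the hypothesis $\Real b > \Real a > 0$) is sound, and these hypotheses also give $\Real b > 0$, so $b$ is not a nonpositive integer and $M(a,b,\cdot)$ is indeed well defined.
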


The main result on confluent hypergeometric functions used in this paper is the following one on the location of the roots of some particular functions, proved in \cite{Wynn1973Zeros} using a continued fraction expansion of the ratio of two such functions.

\begin{proposition}
\label{PropWynn}
Let $a \in \mathbb R$ be such that $a > -\frac{1}{2}$.
\begin{enumerate}
\item If $z \in \mathbb C$ is such that $M(a, 2a+1, z) = 0$, then $\Real z > 0$.
\item If $z \in \mathbb C$ is such that $M(a+1, 2a+1, z) = 0$, then $\Real z < 0$.
\end{enumerate}
\end{proposition}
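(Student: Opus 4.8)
The plan is to reduce the statement to a known result about the zeros of confluent hypergeometric functions.  Proposition~\ref{PropWynn} is, in fact, not something one proves from scratch here; it is the main theorem of Wynn's paper \cite{Wynn1973Zeros}, and the role of this excerpt is merely to quote it in the form we need.  Accordingly, the "proof" I would give is essentially a translation: Wynn studies the zeros of functions of the form $M(a, c, z)$ and obtains, via a continued fraction expansion of the ratio $M(a+1, c+1, z)/M(a, c, z)$, half-plane localization results whenever the parameters satisfy suitable inequalities.  The two cases we state correspond to the choice $c = 2a+1$, which is exactly the self-reciprocal case that makes the integral representation \eqref{EqKummerIntegral} symmetric under $t \mapsto 1-t$ (since then $b - a - 1 = a - 1$), and this symmetry is what pairs the two statements.

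Concretely, first I would record the integral representation: for $a > 0$ (so that $\Real b = 2a+1 > \Real a = a > 0$ and the hypothesis of \eqref{EqKummerIntegral} holds), one has
\[
M(a, 2a+1, z) = \frac{\Gamma(2a+1)}{\Gamma(a)^2} \int_0^1 e^{zt} \bigl(t(1-t)\bigr)^{a-1} \diff t,
\]
and likewise, using $M(a+1, 2a+1, z) = e^z M(a, 2a+1, -z)$ (Kummer's transformation), the second function is obtained from the first by the substitution $z \mapsto -z$ up to the nonvanishing factor $e^z$.  Hence statement (b) is an immediate consequence of statement (a): if $M(a+1, 2a+1, z) = 0$ then $M(a, 2a+1, -z) = 0$, so $\Real(-z) > 0$, i.e.\ $\Real z < 0$.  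This reduces everything to proving (a) for $a > 0$, together with the boundary range $-\tfrac12 < a \leq 0$, which has to be handled by analytic continuation in the parameter $a$ since the integral representation degenerates there.

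For statement (a) itself, the argument of \cite{Wynn1973Zeros} goes through the continued fraction
\[
\frac{M(a, c, z)}{M(a+1, c+1, z)} = \text{(a continued fraction with coefficients that are affine in $z$)},
\]
valid in the cut plane, whose approximants are ratios of quasi-orthogonal polynomials; the key point is that when $c = 2a+1$ and $a > -\tfrac12$ the coefficients have signs forcing the continued fraction to map the right half-plane into itself (a Stieltjes-type positivity), so $M(a, 2a+1, z)$ cannot vanish for $\Real z \leq 0$.  I would present this only in outline, citing Wynn for the detailed estimates, and note that the threshold $a > -\tfrac12$ is sharp because it is exactly what keeps all the continued-fraction coefficients of the correct sign.

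The main obstacle, honestly, is that there is nothing to prove beyond bookkeeping: the genuine analytic content — the continued-fraction positivity estimate and its convergence — lives in \cite{Wynn1973Zeros} and reproving it would be a paper in itself.  So the real work in writing this section is (i) checking that the parameter normalization $b = 2a+1$ and the index shift in part (b) match Wynn's conventions exactly, since different sources order the arguments of $M$ differently and an off-by-one in $a$ versus $a+1$ would swap the two half-planes, and (ii) making sure the boundary case $a \in (-\tfrac12, 0]$ is covered — Wynn's theorem does include it, but if one only had the integral representation one would need a separate continuity-of-zeros argument.  I would therefore simply cite Proposition~\ref{PropWynn} to \cite{Wynn1973Zeros} and, if a self-contained proof is wanted, relegate the continued-fraction estimate to an appendix rather than attempt it inline.
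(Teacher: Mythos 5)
Your proposal takes essentially the same route as the paper: Proposition~\ref{PropWynn} is not proved there either, but is quoted as a result of \cite{Wynn1973Zeros} established via a continued fraction expansion, so citing Wynn and checking the parameter normalization is exactly what the paper does. One small slip in your side remarks: with $b = 2a+1$ one has $b - a - 1 = a$, not $a-1$, so the integral representation reads $M(a,2a+1,z) = \frac{\Gamma(2a+1)}{\Gamma(a)\Gamma(a+1)} \int_0^1 e^{zt} t^{a-1}(1-t)^{a} \diff t$ rather than the symmetric kernel $\bigl(t(1-t)\bigr)^{a-1}$ (that would be the case $b = 2a$); this is in fact the kernel appearing in the paper's factorization \eqref{FactorizationWidetildeDelta}. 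Your reduction of part (b) to part (a) via Kummer's transformation $M(a+1,2a+1,z) = e^z M(a,2a+1,-z)$ is correct and unaffected by this slip, as is your caveat that the range $-\tfrac12 < a \leq 0$ is covered by Wynn's theorem but not by the integral representation alone.
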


\subsection{Binomial coefficients}
\label{SecBinom}

In the sequel, we present some properties of binomial coefficients used in the paper. Even though the proofs of most of such properties are straightforward, they are provided for sake of completeness. We recall that, with the convention that $\binom{k}{j} = 0$ whenever $k < 0$, $j < 0$, or $j > k$, binomial coefficients satisfy the relation $\binom{k}{j} = \binom{k-1}{j-1} + \binom{k-1}{j}$ for every $(k, j) \in \mathbb Z^2 \setminus \{(0, 0)\}$. The first property on binomial coefficients needed in this paper is the following shifting identity.

\begin{proposition}
\label{PropProdBinom}
Let $j, k, \ell \in \mathbb N$ be such that $j \leq k \leq \ell$. Then
\[
\binom{k}{j} \binom{\ell}{k} = \binom{\ell}{j} \binom{\ell - j}{k - j} = \binom{\ell - k + j}{j} \binom{\ell}{\ell - k + j}.
\]
\end{proposition}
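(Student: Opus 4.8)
The plan is to prove both equalities by expanding every binomial coefficient into factorials, which is legitimate because the hypothesis $j \leq k \leq \ell$ guarantees that all the factorials that occur are of nonnegative integers (so none of the binomial coefficients involved is zero by the out-of-range convention). The guiding observation is that each of the three products should collapse to the same symmetric \emph{multinomial coefficient} in the three ``parts'' $j$, $k-j$, and $\ell-k$.

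First I would compute the left-hand side: from $\binom{k}{j} = \frac{k!}{j!\,(k-j)!}$ and $\binom{\ell}{k} = \frac{\ell!}{k!\,(\ell-k)!}$ the factor $k!$ cancels, giving
\[
\binom{k}{j}\binom{\ell}{k} = \frac{\ell!}{j!\,(k-j)!\,(\ell-k)!}.
\]
Next I would expand the middle term in the same way: $\binom{\ell}{j}\binom{\ell-j}{k-j} = \frac{\ell!}{j!\,(\ell-j)!}\cdot\frac{(\ell-j)!}{(k-j)!\,(\ell-j-(k-j))!}$, where the factor $(\ell-j)!$ cancels and $\ell-j-(k-j) = \ell-k$, yielding exactly the expression above. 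Finally, for the right-hand term I would set $m = \ell-k+j$; the hypotheses give $j \leq m \leq \ell$, and $\binom{m}{j}\binom{\ell}{m} = \frac{m!}{j!\,(m-j)!}\cdot\frac{\ell!}{m!\,(\ell-m)!}$, so the factor $m!$ cancels and substituting $m-j = \ell-k$ and $\ell-m = k-j$ recovers the same multinomial coefficient. This proves all three quantities are equal.

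I do not expect a genuine obstacle here; the only point needing a moment of care is verifying that the index $m = \ell-k+j$ lies in $\llbracket j, \ell \rrbracket$, so that $\binom{\ell-k+j}{j}$ and $\binom{\ell}{\ell-k+j}$ are indeed given by the factorial formula. As an alternative, one could give a purely combinatorial proof by counting, in three ways, the nested pairs $(A,B)$ of subsets of a set of cardinality $\ell$ with $A \subseteq B$, $\abs{A} = j$, and $\abs{B} = k$: choosing $B$ and then $A \subseteq B$; choosing $A$ and then the remaining $k-j$ elements of $B$ among the $\ell-j$ elements outside $A$; or choosing $B \setminus A$ first (a set of size $k-j$, using $\binom{\ell}{\ell-k+j} = \binom{\ell}{k-j}$) and then $A$ among the $\ell-k+j$ remaining elements.
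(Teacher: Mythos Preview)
Your proposal is correct and follows essentially the same approach as the paper: both expand every binomial coefficient into factorials and observe that the intermediate factors cancel to give the common expression $\dfrac{\ell!}{j!\,(k-j)!\,(\ell-k)!}$. The paper writes this as two separate chains of equalities (multiplying and dividing by $(\ell-j)!$ and by $(\ell-k+j)!$ respectively) rather than naming the multinomial coefficient, but the computation is identical.
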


\begin{proof}
One immediately computes
\begin{align*}
\binom{k}{j} \binom{\ell}{k} = \frac{k! \ell!}{(k-j)! (\ell-k)! j! k!} = \frac{\ell! (\ell - j)!}{(k-j)! (\ell-k)! j! (\ell - j)!} = \binom{\ell}{j} \binom{\ell - j}{k - j}
\end{align*}
and 
\begin{align*}
\binom{k}{j} \binom{\ell}{k} & = \frac{k! \ell!}{(k-j)! (\ell-k)! j! k!} \\
& = \frac{\ell! (\ell - k + j)!}{(k-j)! (\ell-k)! j! (\ell - k + j)!} = \binom{\ell - k + j}{j} \binom{\ell}{\ell - k + j}. \qedhere
\end{align*}
\end{proof}

Using Proposition~\ref{PropProdBinom}, one can prove the following Kronecker-type binomial formula for products of binomial coefficients with alternating signs.

\begin{proposition}
\label{PropSumBinom}
Let $j, k \in \mathbb N$ be such that $j \leq k$. Then
\[
\sum_{\ell=j}^{k} (-1)^{\ell-j} \binom{\ell}{j} \binom{k}{\ell} = 
\begin{dcases*}
1, & if $j = k$, \\
0, & otherwise.
\end{dcases*}
\]
\end{proposition}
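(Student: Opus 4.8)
The plan is to collapse the sum to a single application of the binomial theorem by first pulling out the factor that does not depend on the summation index. I would begin by invoking Proposition~\ref{PropProdBinom} with the triple $(j, \ell, k)$ playing the role of $(j, k, \ell)$ there; since $j \le \ell \le k$, the hypothesis is met and the first equality of that proposition yields $\binom{\ell}{j}\binom{k}{\ell} = \binom{k}{j}\binom{k-j}{\ell-j}$ for every $\ell$ with $j \le \ell \le k$. Substituting this into the sum extracts the constant factor $\binom{k}{j}$, leaving
\[
\sum_{\ell=j}^{k} (-1)^{\ell-j} \binom{\ell}{j} \binom{k}{\ell} = \binom{k}{j} \sum_{\ell=j}^{k} (-1)^{\ell-j} \binom{k-j}{\ell-j}.
\]

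Next I would reindex the remaining sum via $m = \ell - j$, turning it into $\sum_{m=0}^{k-j} (-1)^m \binom{k-j}{m}$, which is precisely the expansion of $(1-1)^{k-j}$ by the binomial theorem. Hence the whole expression equals $\binom{k}{j}\,(1-1)^{k-j} = \binom{k}{j}\,0^{k-j}$, with the convention $0^0 = 1$. For $j < k$ this is $0$, and for $j = k$ it is $\binom{k}{k} = 1$, which is exactly the asserted identity.

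I do not anticipate a genuine obstacle here: the only points requiring a little care are verifying that Proposition~\ref{PropProdBinom} is applied with the correct ordering of its three arguments (here the relevant chain of inequalities is $j \le \ell \le k$, matching the proposition's hypothesis after the renaming above) and recording the boundary convention $0^0 = 1$, so that the case $j = k$ is subsumed under the same closed-form evaluation.
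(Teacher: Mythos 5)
Your proposal is correct and follows essentially the same route as the paper: apply Proposition~\ref{PropProdBinom} to write $\binom{\ell}{j}\binom{k}{\ell} = \binom{k}{j}\binom{k-j}{\ell-j}$, factor out $\binom{k}{j}$, shift the index, and invoke the binomial expansion of $(1-1)^{k-j}$. The only cosmetic difference is that the paper handles $j=k$ by a separate direct check rather than via the $0^0=1$ convention.
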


\begin{proof}
The case $j = k$ follows from a straightforward computation. If $j < k$, then, using Proposition \ref{PropProdBinom}, we have
\begin{align*}
\sum_{\ell=j}^{k} (-1)^{\ell-j} \binom{\ell}{j} \binom{k}{\ell} & = \binom{k}{j} \sum_{\ell=j}^{k} (-1)^{\ell-j} \binom{k-j}{\ell-j} \\
& = \binom{k}{j} \sum_{\ell=0}^{k-j} (-1)^{\ell} \binom{k-j}{\ell} = \binom{k}{j} (1 + (-1))^{k-j} = 0. \qedhere
\end{align*}
\end{proof}

Proposition~\ref{PropSumBinom} provides an explicit formula for the inverse of the binomial matrix, defined as follows.
\begin{definition}
\label{DefiBinomial}
Let $N \in \mathbb N^\ast$. The \emph{binomial matrix} $B_N = (b_{i, j})_{i, j \in \llbracket 0, N-1\rrbracket}$ is the $N \times N$ matrix with integer coefficients given by $b_{i, j} = \binom{i}{j}$ for $i, j \in \llbracket 0, N-1\rrbracket$.
\end{definition}
Since $B_N$ is a lower triangular matrix and all its diagonal entries are equal to $1$, it is invertible. Its inverse can be easily computed from Proposition~\ref{PropSumBinom}.
\begin{corollary}
Let $B_N$ be the binomial matrix from Definition~\ref{DefiBinomial}. Then $B_N^{-1} = (c_{i, j})_{i, j \in \llbracket 0, N-1\rrbracket}$, where $c_{i, j} = (-1)^{i-j} \binom{i}{j}$.
\end{corollary}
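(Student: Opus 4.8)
The plan is to verify directly that the matrix $C_N = (c_{i,j})_{i,j \in \llbracket 0, N-1\rrbracket}$ with $c_{i,j} = (-1)^{i-j}\binom{i}{j}$ satisfies $B_N C_N = I_N$, the $N \times N$ identity matrix; since $B_N$ is square and invertible, this is enough to conclude that $C_N = B_N^{-1}$. First I would compute, for $i, j \in \llbracket 0, N-1\rrbracket$,
\[
(B_N C_N)_{i,j} = \sum_{\ell=0}^{N-1} b_{i,\ell}\, c_{\ell,j} = \sum_{\ell=0}^{N-1} (-1)^{\ell-j} \binom{i}{\ell} \binom{\ell}{j},
\]
and then observe that, by the convention fixing $\binom{k}{j} = 0$ for $j < 0$ or $j > k$, one has $\binom{\ell}{j} = 0$ for $\ell < j$ and $\binom{i}{\ell} = 0$ for $\ell > i$; hence the only terms of this sum that can be nonzero are those with $j \leq \ell \leq i$.

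Next I would split into two cases. If $i < j$, the set of indices $\ell$ with $j \leq \ell \leq i$ is empty, so $(B_N C_N)_{i,j} = 0$. If $i \geq j$, then
\[
(B_N C_N)_{i,j} = \sum_{\ell=j}^{i} (-1)^{\ell-j} \binom{\ell}{j} \binom{i}{\ell},
\]
which is precisely the sum treated in Proposition~\ref{PropSumBinom} with $k$ taken to be $i$; it therefore equals $1$ when $i = j$ and $0$ when $i > j$. Putting the cases together, $(B_N C_N)_{i,j}$ equals $1$ if $i = j$ and $0$ otherwise, i.e.\ $B_N C_N = I_N$.

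Finally, since $B_N$ is lower triangular with all diagonal entries equal to $1$, it is invertible (as already noted above), and the relation $B_N C_N = I_N$ forces $C_N = B_N^{-1}$, which is the assertion of the corollary. I do not anticipate any genuine difficulty: the computation is a one-line application of Proposition~\ref{PropSumBinom} once the summation range has been trimmed using the vanishing of out-of-range binomial coefficients, and the only point requiring a little care is exactly that bookkeeping. (One could just as well check $C_N B_N = I_N$ by the same argument, after rewriting $(-1)^{i-\ell} = (-1)^{i-j}(-1)^{\ell-j}$ to bring the sum into the form of Proposition~\ref{PropSumBinom}, but verifying one side is sufficient here.)
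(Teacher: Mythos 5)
Your proposal is correct and follows essentially the same route as the paper: compute the entries of $B_N C_N$, trim the sum to $j \leq \ell \leq i$ using the vanishing of out-of-range binomial coefficients, and conclude via Proposition~\ref{PropSumBinom} together with the invertibility of $B_N$. Your case split (zero when $i < j$, Proposition~\ref{PropSumBinom} when $i \geq j$) is in fact stated more carefully than in the paper's own proof, which inadvertently writes the inequality the wrong way around.
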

\begin{proof}
Let $C = (c_{i, j}^{(N)})_{i, j \in \llbracket 0, N-1\rrbracket}$ and define $M = (m_{i, j})_{i, j \in \llbracket 0, N-1\rrbracket}$ by $M = B C$. Then
\[
m_{ij} = \sum_{\ell = 0}^{N-1} (-1)^{\ell - j} \binom{i}{\ell} \binom{\ell}{j}.
\]
If $i > j$, all terms in this sum are zero. Otherwise,
\[
m_{ij} = \sum_{\ell = j}^{i} (-1)^{\ell - j} \binom{i}{\ell} \binom{\ell}{j},
\]
and, by Proposition~\ref{PropSumBinom}, one deduces that $M$ is the identity matrix, as required.
\end{proof}

Another auxiliary result we shall need in this paper is the following, concerning the sum of part of a row of binomial coefficients with alternating signs.

\begin{proposition}
\label{PropSumLine}
Let $k \in \mathbb N^\ast$ and $\ell \in \llbracket 0, k\rrbracket$. Then
\begin{equation}
\label{SumOfALineInThePascalTriangleWithAlternatingSigns}
\sum_{j=0}^{\ell} (-1)^j \binom{k}{j} = (-1)^{\ell} \binom{k-1}{\ell}.
\end{equation}
\end{proposition}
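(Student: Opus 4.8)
The plan is to prove \eqref{SumOfALineInThePascalTriangleWithAlternatingSigns} by a short telescoping argument based on the Pascal rule $\binom{k}{j} = \binom{k-1}{j-1} + \binom{k-1}{j}$, which the excerpt has already recalled for all $(k, j) \in \mathbb Z^2 \setminus \{(0,0)\}$ under the stated sign conventions. First I would substitute this identity into the left-hand side, splitting $\sum_{j=0}^{\ell} (-1)^j \binom{k}{j}$ as $\sum_{j=0}^{\ell} (-1)^j \binom{k-1}{j} + \sum_{j=0}^{\ell} (-1)^j \binom{k-1}{j-1}$.

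Next I would re-index the second sum by setting $i = j-1$, so that $\sum_{j=0}^{\ell} (-1)^j \binom{k-1}{j-1} = -\sum_{i=-1}^{\ell-1} (-1)^i \binom{k-1}{i}$; the $i = -1$ term vanishes by the convention $\binom{k-1}{-1} = 0$, leaving $-\sum_{i=0}^{\ell-1} (-1)^i \binom{k-1}{i}$. Adding this to the first sum, all terms of index $0, \dotsc, \ell-1$ cancel and only the $j = \ell$ term of the first sum survives, yielding exactly $(-1)^\ell \binom{k-1}{\ell}$, which is the right-hand side.

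Alternatively, the same identity follows by induction on $\ell$: the base case $\ell = 0$ is immediate since both sides equal $1$, and the inductive step reduces to checking that $(-1)^{\ell+1} \binom{k}{\ell+1} = (-1)^{\ell+1} \binom{k-1}{\ell+1} - (-1)^{\ell} \binom{k-1}{\ell}$, which is again just Pascal's rule. There is no genuine obstacle in either route; the only point requiring a little care is the bookkeeping around the boundary indices $j = 0$ and $j = \ell$ together with the vanishing conventions for binomial coefficients with out-of-range arguments, which is precisely what makes the telescoping close cleanly for every $\ell \in \llbracket 0, k\rrbracket$, including the extreme case $\ell = k$ where both sides equal $0$.
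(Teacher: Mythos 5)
Your argument is correct. Your primary route (substituting Pascal's rule $\binom{k}{j}=\binom{k-1}{j-1}+\binom{k-1}{j}$ into every summand, re-indexing, and letting the two resulting sums telescope, with the out-of-range terms $\binom{k-1}{-1}$ handled by the paper's vanishing convention) is a direct, non-inductive rearrangement, whereas the paper proves the identity by induction on $\ell$, using Pascal's rule only once in the inductive step; the two are the same idea packaged differently, your version simply ``unrolling'' the induction into a single global cancellation, which is arguably slightly cleaner, while the paper's induction keeps the bookkeeping at the boundary indices trivial since only the top term $j=\ell+1$ is ever touched. Your sketched alternative route is in fact exactly the paper's proof, including the base case $\ell=0$ and the reduction of the inductive step to $\binom{k}{\ell+1}-\binom{k-1}{\ell}=\binom{k-1}{\ell+1}$, and your remark on the extreme case $\ell=k$ (both sides equal $0$, via $\binom{k-1}{k}=0$) is consistent with the conventions stated in the paper.
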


\begin{proof}
Let $k \in \mathbb N^\ast$. We prove \eqref{SumOfALineInThePascalTriangleWithAlternatingSigns} for $\ell \in \llbracket 0, k\rrbracket$ by induction on $\ell$. Clearly, \eqref{SumOfALineInThePascalTriangleWithAlternatingSigns} is satisfied for $\ell = 0$, since both left- and right-hand sides of \eqref{SumOfALineInThePascalTriangleWithAlternatingSigns} are equal to $1$ in this case. Assume now that $\ell \in \llbracket 0, k-1\rrbracket$ is such that \eqref{SumOfALineInThePascalTriangleWithAlternatingSigns} is satisfied. Then
\begin{align*}
\sum_{j=0}^{\ell+1} (-1)^j \binom{k}{j} & = (-1)^\ell \binom{k-1}{\ell} + (-1)^{\ell + 1} \binom{k}{\ell + 1} = (-1)^{\ell + 1} \left[\binom{k}{\ell + 1} - \binom{k-1}{\ell}\right] \\
& = (-1)^{\ell + 1} \binom{k-1}{\ell+1},
\end{align*}
showing that \eqref{SumOfALineInThePascalTriangleWithAlternatingSigns} is satisfied with $\ell$ replaced by $\ell + 1$. Hence, by induction, \eqref{SumOfALineInThePascalTriangleWithAlternatingSigns} holds for every $\ell \in \llbracket 0, k\rrbracket$.
\end{proof}

The following identity is also useful.

\begin{proposition}
\label{PropHorribleSum}
For $j, k, n \in \mathbb N$, let
\begin{equation}
\label{HorribleSum}
S^n_{j, k} = \sum_{\ell = 0}^k (-1)^\ell \binom{n+k-j}{\ell} \binom{n+k-\ell}{n}.
\end{equation}
Then, for every $k, n \in \mathbb N$ and $j \in \llbracket 0, n\rrbracket$, one has
\begin{equation}
\label{SnjkEqualsBinom}
S^n_{j, k} = \binom{j}{k}.
\end{equation}
\end{proposition}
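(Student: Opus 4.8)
The plan is to prove \eqref{SnjkEqualsBinom} by a double induction, the outer one on $j$ running from $j = n$ downwards (or equivalently on $n - j$ upwards), and an inner induction on $k$. First I would dispose of the base case $j = n$: here the binomial $\binom{n+k-j}{\ell} = \binom{k}{\ell}$, so that $S^n_{n,k} = \sum_{\ell=0}^k (-1)^\ell \binom{k}{\ell}\binom{n+k-\ell}{n}$. This is a classical alternating sum that equals $\binom{0}{k}$ (i.e.\ $1$ if $k = 0$ and $0$ otherwise); it can be obtained, for instance, by finite-difference calculus — the $k$-th finite difference of the polynomial $m \mapsto \binom{n+m}{n}$ of degree $n$ evaluated appropriately — or by recognizing it as a terminating Vandermonde/Chu–Vandermonde evaluation. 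Since $\binom{n}{k}$ equals $1$ for $k=0$ and $0$ for $k > n$, and more to the point $\binom{j}{k}$ with $j = n$ is exactly $\binom{n}{k}$, I would rather want the base case $j = n$ to yield $\binom{n}{k}$; a cleaner route is therefore to start the outer induction at $k = 0$ for all $j$, where $S^n_{j,0} = \binom{n-j}{0}\binom{n}{n} = 1 = \binom{j}{0}$, and then argue on $k$.

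For the inductive step I would try to establish a Pascal-type recurrence for $S^n_{j,k}$ directly from \eqref{HorribleSum}, mimicking the recurrence $\binom{j}{k} = \binom{j-1}{k-1} + \binom{j-1}{k}$ satisfied by the target. Concretely, I would split the binomial $\binom{n+k-j}{\ell}$ using Pascal's rule in the form $\binom{n+k-j}{\ell} = \binom{n+k-1-j}{\ell} + \binom{n+k-1-j}{\ell-1}$ (valid for the extended binomials), reindex the second piece by $\ell \mapsto \ell + 1$, and also shift $\binom{n+k-\ell}{n}$ via Pascal's rule $\binom{n+k-\ell}{n} = \binom{n+k-1-\ell}{n} + \binom{n+k-1-\ell}{n-1}$. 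The bookkeeping should collapse the resulting four-ish sums into terms of the form $S^n_{j,k-1}$, $S^n_{j+1,k}$, $S^n_{j+1,k-1}$ (with appropriate $n$), together with boundary terms from the endpoints $\ell = 0$ and $\ell = k$ of the summation. If the algebra works out to, say, $S^n_{j,k} = S^{n}_{j+1,k} + (\text{something}) = S^n_{j-1,k-1} + S^n_{j-1,k}$ — matching $\binom{j}{k} = \binom{j-1}{k-1}+\binom{j-1}{k}$ — then the proposition follows by induction, using Proposition~\ref{PropSumLine} or Proposition~\ref{PropSumBinom} to evaluate any leftover boundary sums.

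The main obstacle I anticipate is precisely controlling those boundary terms and getting the index shifts to line up: the summand in \eqref{HorribleSum} mixes a ``moving upper index'' binomial $\binom{n+k-j}{\ell}$ with a ``moving everything'' binomial $\binom{n+k-\ell}{n}$, so a naive Pascal split produces sums whose ranges of $\ell$ do not immediately match those of $S^n_{\cdot,\cdot}$, and one must absorb the discrepancy using the vanishing conventions for out-of-range binomials plus an auxiliary alternating-sum identity. An alternative, possibly cleaner, approach that I would keep in reserve is a generating-function / coefficient-extraction argument: write $\binom{n+k-\ell}{n} = [x^n](1-x)^{-(k-\ell+1)}$, so that $S^n_{j,k} = [x^n]\, (1-x)^{-(k+1)} \sum_{\ell=0}^k (-1)^\ell \binom{n+k-j}{\ell}(1-x)^{\ell}$, interpret the inner sum (for $j \le n$, so $n+k-j \ge k$, hence the sum is a genuine partial sum of the binomial expansion of $(1-(1-x))^{n+k-j} = x^{n+k-j}$ truncated at degree $k$) via Proposition~\ref{PropSumLine}, and then extract the coefficient of $x^n$. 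This reduces the whole identity to a single, more transparent coefficient computation, at the cost of carefully justifying the truncation formula for the partial binomial sum.
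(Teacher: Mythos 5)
Your primary route (Pascal recurrence plus induction) has a genuine gap in the induction scheme. The recurrence you hope for, $S^n_{j,k} = S^n_{j-1,k-1} + S^n_{j-1,k}$, is correct and is in fact easier to get than you anticipate (a single application of Pascal's rule to $\binom{n+k-j}{\ell}$, followed by reindexing, with no need to split $\binom{n+k-\ell}{n}$ and no surviving boundary terms); this is exactly the recurrence the paper derives. But this recurrence lowers $j$, so together with the column $S^n_{j,0}=1$ it does \emph{not} determine the array: unwinding it eventually requires the values $S^n_{0,k}$ for $k\geq 1$ (equivalently, the vanishing $S^n_{j,k}=0$ for $k>j$, or the diagonal values $S^n_{j,j}$), about which your base case $k=0$ says nothing. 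Supplying that second boundary is precisely the nontrivial part of the paper's proof: it shows $S^n_{j,j}=S^n_{j-1,j-1}$ by summation by parts combined with Proposition~\ref{PropSumLine} (after rewriting via Proposition~\ref{PropProdBinom}), and only then do the recurrence and the first column yield \eqref{SnjkEqualsBinom}. Moreover, your evaluation of the $j=n$ case is wrong: the $k$-th finite difference at $0$ of the degree-$n$ polynomial $m\mapsto\binom{n+m}{n}$ is $\binom{n}{k}$, not $\binom{0}{k}$ (it vanishes only for $k>n$). Ironically, had you kept the correct value $S^n_{n,k}=\binom{n}{k}$, your original downward induction on $j$ would close: from row $j$ and the entry $S^n_{j-1,0}=1$ one gets row $j-1$ by induction on $k$ via $S^n_{j-1,k}=S^n_{j,k}-S^n_{j-1,k-1}=\binom{j}{k}-\binom{j-1}{k-1}=\binom{j-1}{k}$, giving an acceptable substitute for the paper's diagonal identity. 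As written, however, the plan neither verifies the recurrence nor supplies the missing boundary data, so it is not yet a proof.

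Your reserve coefficient-extraction idea is salvageable and, once fixed, gives a genuinely different and shorter argument than the paper's, but not by the mechanism you propose: Proposition~\ref{PropSumLine} has no $(1-x)^\ell$ weights and cannot evaluate the truncated inner sum. The correct mechanism is completion of the sum plus a degree count. For instance, write $\binom{n+k-\ell}{n}=[x^{k-\ell}](1-x)^{-(n+1)}$, so that $S^n_{j,k}=[x^k]\bigl((1-x)^{-(n+1)}\sum_{\ell=0}^{k}(-1)^\ell\binom{n+k-j}{\ell}x^\ell\bigr)$; since $j\leq n$ gives $n+k-j\geq k$, the terms $k<\ell\leq n+k-j$ have $x$-order greater than $k$ and may be added for free, whence $S^n_{j,k}=[x^k](1-x)^{n+k-j-(n+1)}=[x^k](1-x)^{k-j-1}$, which equals $\binom{j}{k}$ for $k\leq j$ and $0$ for $k>j$ (a polynomial of degree $k-j-1<k$). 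In your own normalization the same thing works because the tail terms become $(1-x)^{\ell-k-1}$, polynomials of degree at most $n-j-1<n$, hence invisible to $[x^n]$ --- but the justification is this degree count, not Proposition~\ref{PropSumLine}. If you make that completion step explicit, this route is cleaner than the paper's recurrence-plus-summation-by-parts proof.
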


\begin{proof}
If $j \neq n + k + 1$, then
\begin{align*}
S^n_{j, k} - S^n_{j-1, k} & = \sum_{\ell = 0}^k (-1)^\ell \left[\binom{n+k-j}{\ell} - \binom{n+k-j+1}{\ell}\right] \binom{n+k-\ell}{n} \displaybreak[0] \\
& = \sum_{\ell = 1}^k (-1)^{\ell-1} \binom{n+k-j}{\ell-1} \binom{n+k-\ell}{n} \displaybreak[0] \\
& = \sum_{\ell = 0}^{k-1} (-1)^{\ell} \binom{n+k-j}{\ell} \binom{n+k-\ell-1}{n} = S^n_{j-1, k-1}.
\end{align*}

For $j \geq 1$, using Proposition \ref{PropProdBinom}, we have
\[
S^n_{j, j} = \sum_{\ell = 0}^j (-1)^\ell \binom{n}{\ell} \binom{n+j-\ell}{n} = \sum_{\ell = 0}^j (-1)^\ell \binom{j}{\ell} \binom{n + j - \ell}{j}.
\]
For $\ell \in \llbracket 0, j\rrbracket$, let $a_\ell = (-1)^\ell \binom{j}{\ell}$, $b_\ell = \binom{n + j - \ell}{j}$, and $A_\ell = \sum_{m=0}^\ell a_m$. By Proposition \ref{PropSumLine}, one has $A_\ell = (-1)^\ell \binom{j-1}{\ell}$, and one immediately computes $b_\ell - b_{\ell+1} = \binom{n+j-\ell-1}{j-1}$ for $\ell \in \llbracket 0, j-1\rrbracket$. Noticing that $S^n_{j, j} = \sum_{\ell = 0}^j a_\ell b_\ell$, one computes, using summation by parts (see, e.g., \cite[Theorem 3.41]{Rudin1976Principles}), that
\[
S^n_{j, j} = \sum_{\ell = 0}^{j-1} A_\ell (b_\ell - b_{\ell + 1}) = \sum_{\ell = 0}^{j-1} (-1)^\ell \binom{j-1}{\ell} \binom{n+j-\ell-1}{j-1} = S^n_{j-1, j-1}.
\]
We also compute, for $j \in \llbracket 0, n\rrbracket$, that
\[S^n_{j, 0} = \binom{n-j}{0} \binom{n}{n} = 1.\]

We have thus shown that
\begin{subequations}
\begin{align}
S^n_{j, k} & = S^n_{j-1, k-1} + S^n_{j-1, k}, & \qquad & \text{ for } j \neq n + k + 1, \label{SInduction} \\
S^n_{j, j} & = S^n_{j-1, j-1}, & & \text{ for } j \geq 1, \label{SDiagonal} \\
S^n_{j, 0} & = 1, & & \text{ for } j \in \llbracket 0, n\rrbracket. \label{SFirstCol}
\end{align}
\end{subequations}
In particular, from \eqref{SDiagonal} and \eqref{SFirstCol}, one obtains by an immediate induction that $S^n_{j, j} = 1$ for every $j, n \in \mathbb N$. Together with \eqref{SInduction}, one obtains that $S^n_{j, j+1} = 0$ for every $j, n \in \mathbb N$ and, using an immediate inductive argument and \eqref{SInduction}, one obtains that $S^n_{j, k} = 0$ for every $j, k, n \in \mathbb N$ with $k > j$. Moreover, it also follows from \eqref{SDiagonal} and \eqref{SFirstCol} that $S^n_{j, k} = \binom{j}{k}$ whenever $n \in \mathbb N$, $j \in \llbracket 0, n\rrbracket$, and $k \in \{0, j\}$, and using \eqref{SInduction} and an immediate inductive argument, one obtains that this equality also holds for $k \in \llbracket 0, j\rrbracket$.
\end{proof}

The last identity we provide is the following sum of products of some binomial coefficients.

\begin{proposition}
\label{PropBinomPropertyGeneral}
Let $j, k \in \mathbb N$. Then, for every $\ell \in \llbracket 0, k\rrbracket$, one has
\begin{equation}
\label{BinomPropertyGeneral}
\binom{k}{j} = \sum_{m=0}^{\ell} \binom{\ell}{m} \binom{k-\ell}{j-m}.
\end{equation}
\end{proposition}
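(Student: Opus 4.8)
The identity \eqref{BinomPropertyGeneral} is a form of the Vandermonde convolution, and the plan is to establish it by induction on $\ell$, keeping $j$ and $k$ fixed, using only the Pascal recurrence and the conventions on binomial coefficients recalled at the beginning of Section~\ref{SecBinom}. I would first observe that, since $\ell \leq k$, the upper index $k - \ell$ of the binomial coefficients on the right-hand side is a nonnegative integer, so all terms of the sum are well defined under our conventions, those with $j - m < 0$ or $j - m > k - \ell$ simply vanishing.

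For the base case $\ell = 0$, the right-hand side of \eqref{BinomPropertyGeneral} reduces to the single term $\binom{0}{0}\binom{k}{j} = \binom{k}{j}$, so the identity holds. For the inductive step, I would assume \eqref{BinomPropertyGeneral} holds for some $\ell \in \llbracket 0, k-1\rrbracket$ and consider $\ell + 1$. Expanding $\binom{\ell+1}{m} = \binom{\ell}{m} + \binom{\ell}{m-1}$ inside $\sum_{m=0}^{\ell+1}\binom{\ell+1}{m}\binom{k-\ell-1}{j-m}$, discarding the vanishing term with $m = \ell+1$ coming from the first piece, and shifting the summation index in the second piece (the term originating from $m = -1$ again vanishing), one is left with $\sum_{m=0}^{\ell}\binom{\ell}{m}\bigl[\binom{k-\ell-1}{j-m} + \binom{k-\ell-1}{j-m-1}\bigr]$. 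Since $k - \ell \geq 1$, the pair $(k-\ell, j-m)$ is never $(0,0)$, so the Pascal recurrence applies and the bracket equals $\binom{k-\ell}{j-m}$; the induction hypothesis then identifies the resulting sum with $\binom{k}{j}$, completing the step.

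The only genuinely delicate point — and hence the step I would be most careful about — is the bookkeeping with the extended convention for binomial coefficients: one must verify that the boundary terms dropped during the reindexing (namely $\binom{\ell}{\ell+1}$ and $\binom{\ell}{-1}$) are genuinely zero, and that the invocation of the Pascal recurrence $\binom{k-\ell}{j-m} = \binom{k-\ell-1}{j-m} + \binom{k-\ell-1}{j-m-1}$ is legitimate, which is guaranteed here by $k - \ell \geq 1$. An alternative, equally short route would be to compare the coefficients of $x^j$ on the two sides of $(1+x)^k = (1+x)^\ell (1+x)^{k-\ell}$, but the inductive argument above stays closer to the elementary tools already used in this section.
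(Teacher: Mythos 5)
Your proof is correct and follows essentially the same route as the paper: induction on $\ell$ with base case $\ell = 0$, using the Pascal recurrence, a split of the sum, a reindexing with the vanishing boundary terms $\binom{\ell}{\ell+1}$ and $\binom{\ell}{-1}$, and a recombination via Pascal — the only cosmetic difference being that you start from the sum at level $\ell+1$ and reduce it to the induction hypothesis, whereas the paper starts from the hypothesis at level $\ell$ and builds up to level $\ell+1$. Your check that the Pascal recurrence is applicable because $k-\ell \geq 1$ (so the pair of indices is never $(0,0)$) is a careful touch consistent with the paper's stated conventions.
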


\begin{proof}
The proof is done by induction on $\ell$. For $\ell = 0$, \eqref{BinomPropertyGeneral} holds trivially. Assume that $\ell \in \llbracket 0, k-1\rrbracket$ is such that \eqref{BinomPropertyGeneral} holds. Then
\begin{align*}
\binom{k}{j} & = \sum_{m=0}^{\ell} \binom{\ell}{m} \binom{k-\ell}{j-m} = \sum_{m=0}^{\ell} \binom{\ell}{m} \left[\binom{k-\ell-1}{j-m-1} + \binom{k-\ell-1}{j-m}\right] \displaybreak[0] \\
& = \sum_{m=0}^{\ell} \binom{\ell}{m} \binom{k-\ell-1}{j-m-1} + \sum_{m=0}^{\ell} \binom{\ell}{m} \binom{k-\ell-1}{j-m} \displaybreak[0] \\
& = \sum_{m=1}^{\ell+1} \binom{\ell}{m-1} \binom{k-\ell-1}{j-m} + \sum_{m=0}^{\ell} \binom{\ell}{m} \binom{k-\ell-1}{j-m} \displaybreak[0] \\
& = \sum_{m=0}^{\ell+1} \left[\binom{\ell}{m-1} + \binom{\ell}{m}\right] \binom{k-\ell-1}{j-m} = \sum_{m=0}^{\ell+1} \binom{\ell+1}{m} \binom{k-(\ell+1)}{j-m},
\end{align*}
showing that \eqref{BinomPropertyGeneral} also holds with $\ell$ replaced by $\ell+1$. Hence the result is established by induction.
\end{proof}

\section{Statement of the main result}
\label{SecMainResult}

The main result we prove in this paper is the following characterization of real roots of maximal multiplicity of $\Delta$ and their dominance and the corresponding consequences for the stability of the trivial solution of \eqref{MainSystTime}.

\begin{theorem}
\label{MainTheo}
Consider the quasipolynomial $\Delta$ given by \eqref{Delta} and let $s_0 \in \mathbb R$.
\begin{enumerate}
\item\label{ItemA} The number $s_0$ is a root of multiplicity $2n$ of $\Delta$ if and only if, for every $k \in \llbracket 0, n-1\rrbracket$,
\begin{equation}
\label{Coeffs}
\left\{
\begin{aligned}
a_k & = \binom{n}{k} (-s_0)^{n-k} + (-1)^{n-k} n! \sum_{j=k}^{n-1} \binom{j}{k} \binom{2n-j-1}{n-1} \frac{s_0^{j-k}}{j!\tau^{n-j}}, \\
\alpha_k & = (-1)^{n-1} e^{s_0 \tau} \sum_{j=k}^{n-1} \frac{(-1)^{j-k} (2n-j-1)!}{k! (j-k)! (n-j-1)!} \frac{s_0^{j-k}}{\tau^{n-j}}.
\end{aligned}
\right.
\end{equation}
\item\label{ItemB} If \eqref{Coeffs} is satisfied, then $s_0$ is a strictly dominant root of $\Delta$.
\item\label{ItemC} If \eqref{Coeffs} is satisfied, then the trivial solution of \eqref{MainSystTime} is exponentially stable if and only if $a_{n-1}>-\frac{n^2}{\tau}$.
\end{enumerate}
\end{theorem}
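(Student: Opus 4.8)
The plan is to reduce everything to the normalized case $s_0 = 0$, $\tau = 1$ and then transport the conclusions back. Setting $\sigma = \tau(s - s_0)$ and $\widetilde\Delta(\sigma) = \tau^n \Delta\bigl(s_0 + \sigma/\tau\bigr)$, a direct computation shows that $\widetilde\Delta$ is again a quasipolynomial of the form \eqref{Delta} with delay $1$ and real coefficients, that $\sigma \mapsto s_0 + \sigma/\tau$ is an affine bijection sending $0$ to $s_0$ and $\{\Real \sigma < 0\}$ onto $\{\Real s < s_0\}$, and that it preserves the multiplicity of every root. The relation between the coefficients of $\Delta$ and those of $\widetilde\Delta$ comes from expanding $(\sigma + s_0\tau)^k$ via the binomial theorem and Proposition~\ref{PropBinomPropertyGeneral}. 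Thus it suffices to prove \ref{ItemA}, \ref{ItemB} and \ref{ItemC} for $\widetilde\Delta$ at the origin; the translation of \ref{ItemB} and \ref{ItemC} back to $\Delta$ is immediate, whereas the translation of \ref{ItemA} requires checking that the normalized coefficients, reinserted through the change of variables, produce exactly \eqref{Coeffs}, which is an elementary but lengthy manipulation using the binomial identities of Section~\ref{SecBinom}, most notably Propositions~\ref{PropHorribleSum} and~\ref{PropBinomPropertyGeneral}.

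For \ref{ItemA} in the normalized case, write $\Delta(s) = P(s) + e^{-s}R(s)$ with $P$ monic of degree $n$ and $\deg R \le n-1$. Then $0$ is a root of multiplicity $2n$ if and only if $e^s P(s) + R(s) = O(s^{2n})$ as $s \to 0$. Since $\deg R \le n-1$, the equations coming from the coefficients of $s^0, \dots, s^{n-1}$ merely prescribe $R$ as minus the part of degree $\le n-1$ of $e^s P(s)$, while those coming from the coefficients of $s^n, \dots, s^{2n-1}$ form the square linear system $\sum_{k=0}^{n}\frac{a_k}{(m-k)!} = 0$ for $m \in \llbracket n, 2n-1\rrbracket$, in the unknowns $a_0, \dots, a_{n-1}$ (with $a_n = 1$). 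This system is nonsingular (a functional Birkhoff matrix in the sense of Section~\ref{SecVander}), so its solution is unique; one verifies directly, using Propositions~\ref{PropSumLine} and~\ref{PropHorribleSum}, that it is $a_k = (-1)^{n-k}\tfrac{n!}{k!}\binom{2n-k-1}{n-1}$, and consequently $\alpha_k = (-1)^{n-1}\tfrac{(2n-k-1)!}{k!\,(n-k-1)!}$. These are exactly \eqref{Coeffs} for $s_0 = 0$, $\tau = 1$, which establishes \ref{ItemA} in general after undoing the change of variables.

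The heart of the argument is \ref{ItemB}, in the normalized case. Let $p(t) = \tfrac{1}{(n-1)!}\bigl(t^{n-1}(1-t)^n\bigr)^{(n-1)}$, a polynomial of degree $n$ satisfying $p(0) = 1$, $p(1) = 0$ and $\int_0^1 t^j p(t)\diff t = 0$ for $j \in \llbracket 0, n-2\rrbracket$ (the last by $j$ integrations by parts). By Proposition~\ref{PropIntegralPExp}, $s^{n+1}\int_0^1 p(t) e^{-st}\diff t = \sum_{k=0}^n p^{(k)}(0)\, s^{n-k} - e^{-s}\sum_{k=0}^n p^{(k)}(1)\, s^{n-k}$ is of the form \eqref{Delta}, it vanishes at $0$ to order at least $2n$ (the $n-1$ moment conditions kill the first $n-1$ Taylor coefficients of the integral and the prefactor $s^{n+1}$ adds $n+1$ more), and a Leibniz-rule evaluation of $p^{(k)}(0)$ and $p^{(k)}(1)$ identifies it with the $\Delta$ of \ref{ItemA}. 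Integrating by parts $n-1$ times (all boundary terms vanish because $t^{n-1}(1-t)^n$ has zeros of orders $n-1$ and $n$ at $0$ and $1$), one gets $\int_0^1 p(t) e^{-st}\diff t = \tfrac{s^{n-1}}{(n-1)!}\int_0^1 t^{n-1}(1-t)^n e^{-st}\diff t$, hence $\Delta(s) = \tfrac{1}{(n-1)!}\, s^{2n}\int_0^1 t^{n-1}(1-t)^n e^{-st}\diff t$, and the integral representation \eqref{EqKummerIntegral} with $a = n$, $b = 2n+1$, $z = -s$ gives
\[
\Delta(s) = \frac{n!}{(2n)!}\, s^{2n}\, M(n, 2n+1, -s).
\]
Since $M(n, 2n+1, 0) = 1$, the root at $0$ has multiplicity exactly $2n$; and if $s \neq 0$ is any root of $\Delta$, then $M(n, 2n+1, -s) = 0$, so the first part of Proposition~\ref{PropWynn} (applicable since $a = n > -\tfrac12$) forces $\Real(-s) > 0$, i.e. $\Real s < 0$. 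Therefore $0$ is strictly dominant, and transporting back, $s_0$ is strictly dominant for the original $\Delta$.

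Finally, \ref{ItemC} follows immediately: exponential stability of \eqref{MainSystTime} is equivalent to the spectral abscissa of $\Delta$ being negative, and by \ref{ItemB} this abscissa equals $s_0$, so stability holds if and only if $s_0 < 0$; specializing \eqref{Coeffs} to $k = n-1$, where only the term $j = n-1$ contributes, gives $a_{n-1} = -n s_0 - n^2/\tau$, hence $a_{n-1} > -n^2/\tau$ if and only if $s_0 < 0$. The step I expect to be the main obstacle is the factorization in \ref{ItemB}: guessing the right polynomial $p$ (equivalently, recognizing the canonical $\Delta$ as a Jacobi-type incomplete-Beta integral), carrying out the integrations by parts with no residual boundary terms, and matching the result to the Kummer function $M(n, 2n+1, \cdot)$ so that the root-localization result of Proposition~\ref{PropWynn} applies verbatim. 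The surrounding computations — the closed forms for $a_k$ and $\alpha_k$ and, above all, their transport through the change of variables into \eqref{Coeffs} — are routine but are where all the binomial identities of Section~\ref{SecBinom} are consumed.
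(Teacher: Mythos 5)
Your proposal is correct and follows the same overall strategy as the paper: normalize to $s_0=0$, $\tau=1$, determine the coefficients of maximal multiplicity from a linear system, factor the resulting quasipolynomial through the Beta-type integral $\int_0^1 t^{n-1}(1-t)^n e^{-zt}\diff t$, identify it with $\frac{n!}{(2n)!}z^{2n}M(n,2n+1,-z)$ via \eqref{EqKummerIntegral}, and conclude dominance from Proposition~\ref{PropWynn}; the reductions of \ref{ItemB} and \ref{ItemC} to the normalized case and the relation $a_{n-1}=-ns_0-n^2/\tau$ are handled exactly as in the paper. Two local steps differ. For \ref{ItemA} you multiply by $e^{s}$ and obtain the square system $\sum_{k=0}^{n} a_k/(m-k)!=0$, $m\in\llbracket n,2n-1\rrbracket$, for the \emph{non-delayed} coefficients, whereas the paper differentiates $\widetilde\Delta$ directly and first solves the analogous system for the delayed coefficients $\beta_k$, proving its nonsingularity by an explicit LU factorization ($B=AC$ via Proposition~\ref{PropBinomPropertyGeneral}) and verifying the closed form through Proposition~\ref{PropHorribleSum}; your version is equivalent, but the nonsingularity of the factorial matrix and the verification that $a_k=(-1)^{n-k}\frac{n!}{k!}\binom{2n-k-1}{n-1}$ solves it are asserted rather than carried out, so these routine checks would still need to be written down (mirroring the paper's $B=AC$ argument, e.g.\ after rescaling rows and columns by factorials). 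For the factorization you proceed constructively: starting from $p=\frac{1}{(n-1)!}\bigl(t^{n-1}(1-t)^n\bigr)^{(n-1)}$, the moment and boundary conditions show that $s^{n+1}\int_0^1 p(t)e^{-st}\diff t$ is a quasipolynomial of the form \eqref{Delta} with a zero of order $2n$ at the origin, which by the uniqueness in \ref{ItemA} must be the extremal $\widetilde\Delta$, and $n-1$ boundary-term-free integrations by parts then yield \eqref{FactorizationWidetildeDelta}; the paper instead verifies the same identity by expanding $\widetilde\Delta(z)/z^{2n}$ and matching against Proposition~\ref{PropIntegralPExp} with $p(t)=t^{n-1}(1-t)^n$. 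Your derivation is slightly slicker (no $1/z$-expansion bookkeeping) and is close in spirit to the paper's alternative Laplace-domain proof in Section~\ref{SecFurther}, but the key idea and the use of Wynn's theorem are the same.
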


Since the proof of Theorem~\ref{MainTheo} contains several ideas that are of independent interest, and to improve the organization and readability of the paper, we present the detailed proof of Theorem~\ref{MainTheo} in Section~\ref{SecProof}.

\begin{remark}
Let $s_0 \in \mathbb R$, $\Delta$ be the quasipolynomial given by \eqref{Delta}, and assume that the coefficients of $\Delta$ are given by \eqref{Coeffs}. Then, by considering the first equation in \eqref{Coeffs} with $k = n-1$, one obtains the simple relation between $s_0$, $\tau$, and $a_{n-1}$ given by
\begin{equation}
\label{RelationS0ANMinus1Tau}
s_0 = -\frac{a_{n-1}}{n} - \frac{n}{\tau}.
\end{equation}
In particular, Theorem~\ref{MainTheo}\ref{ItemC} is an immediate consequence of Theorem~\ref{MainTheo}\ref{ItemB}.
\end{remark}

\section{Proof of the main result}
\label{SecProof}
The proof of Theorem~\ref{MainTheo} consists in three steps: the normalization of the quasipolynomial $\Delta$, the establishment of the necessary and sufficient conditions guaranteeing the maximal multiplicity, and the proof of dominance of the multiple root with respect to the remaining spectrum.

\subsection{Normalization of the characteristic function}

The first step of the proof is to perform an affine change of variable in $\Delta$ in order to write it in a normalized form, in which the desired multiple root $s_0$ becomes $0$ and the delay $\tau$ becomes $1$. The next lemma provides relations between the coefficients of $\Delta$ and those of the quasipolynomial $\widetilde\Delta$ obtained after the change of variables.

\begin{lemma}
\label{LemmDeltaTilde}
Let $s_0 \in \mathbb R$ and consider the quasipolynomial $\widetilde\Delta: \mathbb C \to \mathbb C$ obtained from $\Delta$ by the change of variables $z = \tau(s - s_0)$ and multiplication by $\tau^n$, i.e.,
\begin{equation}
\label{DefiDeltaTilde}
\widetilde\Delta(z) = \tau^n \Delta\left(s_0 + \tfrac{z}{\tau}\right).
\end{equation}
Then
\begin{equation}
\label{DeltaTilde}
\widetilde\Delta(z) = z^n + \sum_{k=0}^{n-1} b_k z^k + e^{-z} \sum_{k=0}^{n-1} \beta_k z^k,
\end{equation}
where, for $k \in \llbracket 0, n-1\rrbracket$,
\begin{equation}
\label{RelationBA}
\left\{
\begin{aligned}
b_k & = \binom{n}{k} \tau^{n-k} s_0^{n-k} + \tau^{n-k} \sum_{j=k}^{n-1} \binom{j}{k} s_0^{j-k} a_j, \\
\beta_k & =  \tau^{n - k} e^{- s_0 \tau} \sum_{j=k}^{n-1} \binom{j}{k} s_0^{j - k} \alpha_j.
\end{aligned}
\right.
\end{equation}
\end{lemma}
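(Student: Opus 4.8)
The statement is a direct substitution-and-bookkeeping lemma: I would simply expand $\Delta\!\left(s_0 + \tfrac{z}{\tau}\right)$, multiply by $\tau^n$, and collect powers of $z$. The plan is to treat the non-delayed part $s^n + \sum_{k=0}^{n-1} a_k s^k$ and the delayed part $e^{-s\tau}\sum_{k=0}^{n-1}\alpha_k s^k$ separately, since the change of variables $z = \tau(s-s_0)$ turns $e^{-s\tau}$ into $e^{-s_0\tau}e^{-z}$, which is why the factor $e^{-s_0\tau}$ appears in $\beta_k$ but not in $b_k$.

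First I would write $s = s_0 + \tfrac{z}{\tau}$ and, for each monomial $s^j$ with $0 \le j \le n$, apply the binomial theorem:
\[
\tau^n s^j = \tau^n \left(s_0 + \tfrac{z}{\tau}\right)^j = \tau^{n} \sum_{k=0}^{j} \binom{j}{k} s_0^{j-k} \frac{z^k}{\tau^k} = \sum_{k=0}^{j} \binom{j}{k} \tau^{n-k} s_0^{j-k} z^k.
\]
Summing this over the non-delayed terms, the coefficient of $z^k$ in $\tau^n\bigl(s^n + \sum_{j=0}^{n-1} a_j s^j\bigr)$ is obtained by picking out the $z^k$ contribution from the $j=n$ term (which gives $\binom{n}{k}\tau^{n-k}s_0^{n-k}$, and equals $1$ when $k=n$ so that $\widetilde\Delta$ is monic of degree $n$) and from each $j \in \llbracket k, n-1\rrbracket$ (which gives $\binom{j}{k}\tau^{n-k}s_0^{j-k}a_j$), yielding exactly the formula for $b_k$ in \eqref{RelationBA}. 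The same expansion applied to $\tau^n e^{-s\tau}\sum_{j=0}^{n-1}\alpha_j s^j = e^{-s_0\tau} e^{-z} \sum_{j=0}^{n-1}\alpha_j \tau^n s^j$ gives, as the coefficient of $z^k$ inside the bracket multiplying $e^{-z}$, the quantity $\tau^{n-k} e^{-s_0\tau}\sum_{j=k}^{n-1}\binom{j}{k}s_0^{j-k}\alpha_j$, which is the formula for $\beta_k$.

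Finally I would note that the resulting function is $z^n + \sum_{k=0}^{n-1} b_k z^k + e^{-z}\sum_{k=0}^{n-1}\beta_k z^k$, matching \eqref{DeltaTilde}, and that $\widetilde\Delta$ is again a quasipolynomial of the same form \eqref{Delta} (with $\tau$ replaced by $1$, $s_0$ by $0$), so that all earlier structural results apply to it. There is no real obstacle here; the only point requiring a little care is the index manipulation in the double sum, namely exchanging $\sum_{j} a_j \sum_{k \le j}$ for $\sum_k \sum_{j \ge k}$, and checking the $j=n$ term separately to confirm the leading coefficient is $1$ and that it contributes $\binom{n}{k}\tau^{n-k}s_0^{n-k}$ to $b_k$ for $k \le n-1$.
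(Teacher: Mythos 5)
Your proposal is correct and follows essentially the same route as the paper: expand $\tau^n\Delta\bigl(s_0+\tfrac{z}{\tau}\bigr)$ by the binomial theorem, factor $e^{-s\tau}=e^{-s_0\tau}e^{-z}$, and exchange the order of summation to read off the coefficients of $z^k$ (the paper merely folds your separate $j=n$ term into the sum by setting $a_n=1$). No gaps.
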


\begin{proof}
To simplify the notations, let us define $a_n = 1$. Then the first equation in \eqref{RelationBA} can be written in a more compact manner as
\[
b_k = \tau^{n-k} \sum_{j=k}^{n} \binom{j}{k} s_0^{j-k} a_j.
\]
By \eqref{DefiDeltaTilde}, one has
\begin{align*}
\widetilde\Delta(z) & = \tau^n \sum_{j=0}^n a_j \left(s_0 + \frac{z}{\tau}\right)^j + e^{-\left(s_0 + \frac{z}{\tau}\right) \tau} \tau^n \sum_{j=0}^{n-1} \alpha_j \left(s_0 + \frac{z}{\tau}\right)^{j} \displaybreak[0] \\
& = \sum_{j=0}^n a_j \tau^{n-j} \left(s_0 \tau + z\right)^j + e^{-s_0 \tau} e^{-z} \sum_{j=0}^{n-1} \alpha_j \tau^{n-j} \left(s_0 \tau + z\right)^{j} \displaybreak[0] \\
& = \sum_{j=0}^n a_j \tau^{n-j} \sum_{k=0}^j \binom{j}{k} s_0^{j-k} \tau^{j-k} z^k + e^{-s_0 \tau} e^{-z} \sum_{j=0}^{n-1} \alpha_j \tau^{n-j} \sum_{k=0}^j \binom{j}{k} s_0^{j-k} \tau^{j-k} z^k \displaybreak[0] \\
& = \sum_{k=0}^n \left(\tau^{n-k} \sum_{j=k}^n \binom{j}{k} s_0^{j-k} a_j\right) z^k + e^{-z} \sum_{k=0}^{n-1} \left(\tau^{n-k} e^{-s_0 \tau} \sum_{j=k}^{n-1} \binom{j}{k} s_0^{j-k} \alpha_j\right) z^k \displaybreak[0] \\
& = z^n + \sum_{k=0}^{n-1} \left(\tau^{n-k} \sum_{j=k}^n \binom{j}{k} s_0^{j-k} a_j\right) z^k + e^{-z} \sum_{k=0}^{n-1} \left(\tau^{n-k} e^{-s_0 \tau} \sum_{j=k}^{n-1} \binom{j}{k} s_0^{j-k} \alpha_j\right) z^k,
\end{align*}
which is precisely \eqref{DeltaTilde} with coefficients given by \eqref{RelationBA}.
\end{proof}

The relations between the coefficients $b_0, \dotsc, b_{n-1}, \beta_0, \dotsc, \beta_{n-1}$ and $a_0,\allowbreak \dotsc,\allowbreak a_{n-1},\allowbreak \alpha_0,\allowbreak \dotsc,\allowbreak \alpha_{n-1}$ can be expressed under matrix form as
\[
b = T a + v, \qquad \beta = e^{-s_0 \tau} T \alpha,
\]
where
\[
b = \begin{pmatrix}
b_0 \\
\vdots \\
b_{n-1} \\
\end{pmatrix}, \quad \beta = \begin{pmatrix}
\beta_0 \\
\vdots \\
\beta_{n-1} \\
\end{pmatrix}, \quad a = \begin{pmatrix}
a_0 \\
\vdots \\
a_{n-1} \\
\end{pmatrix}, \quad \alpha = \begin{pmatrix}
\alpha_0 \\
\vdots \\
\alpha_{n-1} \\
\end{pmatrix}, \quad v = \begin{pmatrix}
\binom{n}{0} \tau^n s_0^n \\
\binom{n}{1} \tau^{n-1} s_0^{n-1} \\
\vdots \\
\binom{n}{n-1} \tau s_0
\end{pmatrix},
\]
and
\begin{equation}
\label{EqDefiT}
T = \begin{pmatrix}
\binom{0}{0} \tau^n & \binom{1}{0} \tau^n s_0 & \binom{2}{0} \tau^n s_0^2 & \cdots & \binom{n-2}{0} \tau^n s_0^{n-2} & \binom{n-1}{0} \tau^n s_0^{n-1} \\
0 & \binom{1}{1} \tau^{n-1} & \binom{2}{1} \tau^{n-1} s_0 & \cdots & \binom{n-2}{1} \tau^{n-1} s_0^{n-3} & \binom{n-1}{1} \tau^{n-1} s_0^{n-2} \\
0 & 0 & \binom{2}{2} \tau^{n-2} & \cdots & \binom{n-2}{2} \tau^{n-2} s_0^{n-4} & \binom{n-1}{2} \tau^{n-2} s_0^{n-3} \\
\vdots & \vdots & \vdots & \ddots & \vdots & \vdots \\
0 & 0 & 0 & \cdots & \binom{n-2}{n-2} \tau^2 & \binom{n-1}{n-2} \tau^2 s_0 \\
0 & 0 & 0 & \cdots & 0 & \binom{n-1}{n-1} \tau \\
\end{pmatrix}.
\end{equation}
Noticing that the confluent functional Vandermonde matrix $T$ is invertible, one may thus express $a$ and $\alpha$ in terms of $b$ and $\beta$ as
\begin{equation}
\label{RelationABMatrix}
a = T^{-1}(b - v), \qquad \alpha = e^{s_0 \tau} T^{-1} \beta.
\end{equation}
Our next result provides explicit expressions for \eqref{RelationABMatrix}.

\begin{lemma}
\label{LemmCoeffsInverseTransform}
Let $\tau > 0$, $s_0 \in \mathbb R$, and $a_0, \dotsc, a_{n-1}, \alpha_0, \dotsc, \alpha_{n-1}, b_0, \dotsc, b_{n-1}, \beta_0, \dotsc, \beta_{n-1}$ be real numbers satisfying \eqref{RelationBA} for every $k \in \llbracket 0, n-1\rrbracket$. Then, for $k \in \llbracket 0, n-1\rrbracket$,
\begin{equation}
\label{RelationAB}
\left\{
\begin{aligned}
a_k & = \binom{n}{k} (-s_0)^{n - k} + \sum_{j=k}^{n-1} (-1)^{j-k} \binom{j}{k} \frac{s_0^{j-k}}{\tau^{n-j}} b_j, \\
\alpha_k & = e^{s_0 \tau} \sum_{j=k}^{n-1} (-1)^{j-k} \binom{j}{k} \frac{s_0^{j-k}}{\tau^{n-j}} \beta_j.
\end{aligned}
\right.
\end{equation}
\end{lemma}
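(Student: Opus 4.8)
The plan is to invert explicitly the confluent functional Vandermonde matrix $T$ from \eqref{EqDefiT} and then simply read off \eqref{RelationAB} from the matrix identities \eqref{RelationABMatrix}. Concretely, I would introduce the upper-triangular matrix $U = (u_{k,j})_{k,j \in \llbracket 0, n-1\rrbracket}$ defined by $u_{k,j} = (-1)^{j-k} \binom{j}{k} \frac{s_0^{j-k}}{\tau^{n-j}}$ for $k \leq j$ and $u_{k,j} = 0$ for $k > j$, and claim that $U = T^{-1}$. Reading off the entries of $T$ from \eqref{EqDefiT} as $t_{k,j} = \binom{j}{k} \tau^{n-k} s_0^{j-k}$ for $k \leq j$, one computes
\[
(TU)_{k,\ell} = \sum_{j = k}^{\ell} (-1)^{\ell - j} \binom{j}{k}\binom{\ell}{j} \tau^{\ell - k} s_0^{\ell - k} = \tau^{\ell - k} s_0^{\ell - k} (-1)^{\ell - k} \sum_{j=k}^{\ell} (-1)^{j - k} \binom{j}{k}\binom{\ell}{j},
\]
and the last sum equals $\delta_{k,\ell}$ by Proposition~\ref{PropSumBinom} (after the obvious relabelling of indices), so $(TU)_{k,\ell} = \delta_{k,\ell}$; since $T$ is square and invertible, this yields $U = T^{-1}$.

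Once this is established, \eqref{RelationABMatrix} gives $a = U(b - v) = Ub - Uv$ and $\alpha = e^{s_0 \tau} U \beta$. The terms $Ub$ and $U\beta$ are precisely the sums $\sum_{j=k}^{n-1} (-1)^{j-k}\binom{j}{k}\frac{s_0^{j-k}}{\tau^{n-j}} b_j$ and $\sum_{j=k}^{n-1} (-1)^{j-k}\binom{j}{k}\frac{s_0^{j-k}}{\tau^{n-j}} \beta_j$ appearing in \eqref{RelationAB}, so the only remaining computation is $Uv$ with $v_j = \binom{n}{j} \tau^{n-j} s_0^{n-j}$. There the powers of $\tau$ cancel and one obtains $(Uv)_k = s_0^{n-k} \sum_{j=k}^{n-1} (-1)^{j-k} \binom{j}{k}\binom{n}{j}$. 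Applying Proposition~\ref{PropSumBinom} with upper index $n$ gives $\sum_{j=k}^{n} (-1)^{j-k} \binom{j}{k}\binom{n}{j} = \delta_{k,n} = 0$ (since $k \leq n-1$), so the truncated sum equals minus the missing $j = n$ term, namely $-(-1)^{n-k}\binom{n}{k}$, whence $(Uv)_k = -\binom{n}{k}(-s_0)^{n-k}$. Subtracting, $a_k = \binom{n}{k}(-s_0)^{n-k} + (Ub)_k$, which is the first line of \eqref{RelationAB}, and the second line is immediate from $\alpha = e^{s_0\tau} U\beta$.

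The argument is essentially routine once Proposition~\ref{PropSumBinom} is available, so I do not expect a genuine obstacle; the one point requiring care is the boundary contribution coming from the constant vector $v$, where Proposition~\ref{PropSumBinom} must be invoked with the ``out-of-range'' upper index $n$ and one must correctly account for the single term dropped when passing from the sum over $j \in \llbracket k, n\rrbracket$ to the sum over $j \in \llbracket k, n-1\rrbracket$. (Alternatively, one could bypass the matrix inversion and verify \eqref{RelationAB} by substituting it back into \eqref{RelationBA}, reducing again to Proposition~\ref{PropSumBinom}; the matrix formulation seems cleaner and makes the role of the invertibility of $T$ transparent.)
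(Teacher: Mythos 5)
Your argument is correct and coincides with the paper's own proof: the matrix $U$ you define is exactly the matrix $S$ used there, the identity $TU=\id$ is verified via Proposition~\ref{PropSumBinom} in the same way, and the boundary term coming from $v$ is handled by the same add-and-subtract of the $j=n$ term (again via Proposition~\ref{PropSumBinom} with upper index $n$). No gaps to report.
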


\begin{proof}
Let $T = \left(T_{j, k}\right)_{j, k \in \llbracket 0, n-1\rrbracket}$ be the matrix defined in \eqref{EqDefiT} and $S = \left(S_{j, k}\right)_{j, k \in \llbracket 0, n-1\rrbracket}$ be the matrix whose coefficients are given, for $j, k \in \llbracket 0, n-1\rrbracket$, by
\begin{equation}
\label{Tinverse}
S_{j, k} = \begin{dcases*}
0, & if $j > k$, \\
(-1)^{k-j} \binom{k}{j} \frac{1}{\tau^{n-k}} s_0^{k-j}, & if $j \leq k$.
\end{dcases*}
\end{equation}
We claim that $S = T^{-1}$. Indeed, let $M = T S$ and write $M = \left(M_{j, k}\right)_{j, k \in \llbracket 0, n-1\rrbracket}$. Hence, for $j, k \in \llbracket 0, n-1\rrbracket$, the coefficient $M_{j, k}$ is given by $M_{j, k} = \sum_{\ell=0}^{n-1} T_{j, \ell} S_{\ell, k}$. Since $T_{j, \ell} = 0$ for $\ell < j$ and $S_{\ell, k} = 0$ for $\ell > k$, one immediately obtains that $M_{j, k} = 0$ for $k < j$. For $k \geq j$, one has
\begin{align*}
M_{j, k} & = \sum_{\ell=j}^k T_{j, \ell} S_{\ell, k} = \sum_{\ell=j}^k \binom{\ell}{j} \tau^{n - j} s_0^{\ell - j} (-1)^{k-\ell} \binom{k}{\ell} \frac{1}{\tau^{n-k}} s_0^{k-\ell} \displaybreak[0] \\
& = (-\tau s_0)^{k-j} \sum_{\ell=j}^k (-1)^{\ell-j} \binom{\ell}{j} \binom{k}{\ell},
\end{align*}
and it follows from Proposition \ref{PropSumBinom} that $M_{j, k} = 1$ if $j = k$ and $M_{j, k} = 0$ for $k > j$. Hence $M = \id$, proving that $S = T^{-1}$.

The expression for $\alpha_k$ in \eqref{RelationAB} follows immediately from \eqref{RelationABMatrix} and \eqref{Tinverse}. Concerning the expression for $a_k$ in \eqref{RelationAB}, one obtains from \eqref{RelationABMatrix} and \eqref{Tinverse}, using also Proposition \ref{PropSumBinom}, that, for $k \in \llbracket 0, n-1\rrbracket$,
\begin{align*}
a_k & = \sum_{j=k}^{n-1} (-1)^{j-k} \binom{j}{k} \frac{1}{\tau^{n-j}} s_0^{j-k} \left(b_j - \binom{n}{j} \tau^{n-j} s_0^{n-j}\right) \displaybreak[0] \\
& = \sum_{j=k}^{n-1} (-1)^{j-k} \binom{j}{k} \frac{s_0^{j-k}}{\tau^{n-j}} b_j - s_0^{n-k} \sum_{j=k}^{n-1} (-1)^{j-k} \binom{j}{k} \binom{n}{j} \displaybreak[0] \\
& = \sum_{j=k}^{n-1} (-1)^{j-k} \binom{j}{k} \frac{s_0^{j-k}}{\tau^{n-j}} b_j - s_0^{n-k} \left[\sum_{j=k}^{n} (-1)^{j-k} \binom{j}{k} \binom{n}{j} - (-1)^{n-k} \binom{n}{k} \right] \displaybreak[0] \\
& = \binom{n}{k} (-s_0)^{n-k} + \sum_{j=k}^{n-1} (-1)^{j-k} \binom{j}{k} \frac{s_0^{j-k}}{\tau^{n-j}} b_j. \qedhere
\end{align*}
\end{proof}

\subsection{Real root of maximal multiplicity}

Now that we have established by Lemmas~\ref{LemmDeltaTilde} and \ref{LemmCoeffsInverseTransform} a correspondence between the coefficients of $\Delta$ and the normalized quasipolynomial $\widetilde\Delta$, we provide necessary and sufficient conditions on the coefficients of $\widetilde\Delta$ in order for $0$ to be a root of maximal multiplicity $2n$.

\begin{lemma}
\label{LemmCoeffsBetaB}
Let $n \in \mathbb N^\ast$, $b_0, \dotsc, b_{n-1}, \beta_0, \dotsc, \beta_{n-1} \in \mathbb R$, and $\widetilde\Delta$ be the quasipolynomial given by \eqref{DeltaTilde}. Then $0$ is a root of multiplicity $2n$ of $\widetilde\Delta$ if and only if, for every $k \in \llbracket 0, n-1\rrbracket$, one has
\begin{equation}
\label{BetaBMaxMultiplicity}
\left\{
\begin{aligned}
b_k & = (-1)^{n-k} \frac{n!}{k!} \binom{2n-k-1}{n-1}, \\
\beta_k & = (-1)^{n-1} \frac{(2n-k-1)!}{k! (n-k-1)!}.
\end{aligned}
\right.
\end{equation}
\end{lemma}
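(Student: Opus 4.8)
The plan is to expand $\widetilde\Delta$ as a power series around $0$ and impose that the coefficients of $z^0, z^1, \dots, z^{2n-1}$ all vanish, which is exactly the condition that $0$ is a root of multiplicity at least $2n$; since the degree of the quasipolynomial is $2n$, multiplicity exactly $2n$ then follows automatically (the coefficient of $z^{2n}$, coming solely from $z^n$ and from $e^{-z}\sum \beta_k z^k$ through $\sum_k \beta_k (-1)^n/n! \cdots$, is generically nonzero, but in fact we only need ``at least $2n$'', so I would just verify $0$ is not a root of multiplicity $2n+1$ or more, which is immediate from the degree bound). Writing $e^{-z} = \sum_{m\ge 0} (-1)^m z^m/m!$, the coefficient of $z^i$ in $\widetilde\Delta(z)$ is
\[
[z^i]\,\widetilde\Delta = [i = n] + b_i\,\chi_{\{i\le n-1\}} + \sum_{k=0}^{\min(i,n-1)} \beta_k \frac{(-1)^{i-k}}{(i-k)!},
\]
so the multiplicity-$2n$ condition is the linear system: for $i \in \llbracket 0, n-1\rrbracket$,
\[
b_i + \sum_{k=0}^{i} \beta_k \frac{(-1)^{i-k}}{(i-k)!} = 0,
\]
and for $i \in \llbracket n, 2n-1\rrbracket$,
\[
[i=n] + \sum_{k=0}^{n-1} \beta_k \frac{(-1)^{i-k}}{(i-k)!} = 0.
\]

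The first block determines $b_0, \dots, b_{n-1}$ uniquely once $\beta_0, \dots, \beta_{n-1}$ are known, so the real content is the second block: $n$ equations in the $n$ unknowns $\beta_0, \dots, \beta_{n-1}$. I would show this $n\times n$ system has a unique solution and that it is given by the claimed formula $\beta_k = (-1)^{n-1}(2n-k-1)!/(k!(n-k-1)!)$. For uniqueness, the coefficient matrix has entries $(-1)^{i-k}/(i-k)!$ for $i\in\llbracket n, 2n-1\rrbracket$, $k\in\llbracket 0,n-1\rrbracket$; reindexing $i = n+p$, $p\in\llbracket 0,n-1\rrbracket$, the entry is $(-1)^{n+p-k}/(n+p-k)!$, which is a nonsingular (essentially triangular-like, or a ``truncated exponential'') matrix — nonsingularity can be seen by noting its determinant is that of a matrix of the form $(c_{p-k})$ with $c_m = 0$ for $m < -(n-1)$... more simply, one can argue the homogeneous system $\sum_{k} \beta_k (-1)^{n+p-k}/(n+p-k)! = 0$ for all $p$ forces the polynomial-times-exponential $e^{-z}\sum \beta_k z^k$ to vanish to order $2n$ at $0$ while having degree $\le n-1$ in its polynomial part, hence (being of the stated quasipolynomial form with only $2n-1$... ) it must be identically zero, so all $\beta_k = 0$. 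So it suffices to verify that the claimed values satisfy the system.

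To verify the claimed $\beta_k$ satisfy $\sum_{k=0}^{n-1} \beta_k (-1)^{i-k}/(i-k)! = -[i=n]$ for $i\in\llbracket n,2n-1\rrbracket$, I substitute and reduce to a binomial identity: after pulling out signs and factorials, the claim becomes something like $\sum_{k=0}^{n-1} \binom{2n-k-1}{n-1}\binom{i}{k}(-1)^{?} = \pm[i=n]$ up to normalization, which I expect to follow from Proposition~\ref{PropHorribleSum} (the identity $S^n_{j,k} = \binom{j}{k}$) or from Proposition~\ref{PropSumLine}, possibly after the change of variable $\ell = i - k$. I would similarly plug the resulting $\beta_k$ into the first block to read off $b_i$ and again reduce to a binomial identity provable from Propositions~\ref{PropHorribleSum} and \ref{PropSumLine}. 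The main obstacle is purely bookkeeping: correctly matching the alternating-sign binomial sums that arise from convolving $e^{-z}$ with the polynomial part against the precise forms of the identities in Section~\ref{SecBinom}, and being careful that the index ranges in $S^n_{j,k}$ (which needs $j\in\llbracket 0,n\rrbracket$) are respected. An alternative, cleaner route for the verification — which I would mention — is to use Proposition~\ref{PropIntegralPExp}: if one can guess that $\widetilde\Delta(z)$ equals $z^{2n}$ times (a constant times) an integral $\int_0^1 p(t) e^{-zt}\,dt$ for a suitable polynomial $p$ of degree $n-1$, then expanding that integral via \eqref{IntegralPExp} and matching against \eqref{DeltaTilde} directly yields the formulas for $b_k$ and $\beta_k$ in \eqref{BetaBMaxMultiplicity}, turning the whole lemma into one application of Proposition~\ref{PropIntegralPExp} plus the uniqueness argument above; this is presumably the route the paper takes, since it is exactly the integral factorization advertised in the introduction.
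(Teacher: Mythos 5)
Your reduction to the linear system on $b_k, \beta_k$ is essentially the system the paper itself solves (in the equivalent variables $P^{(k)}(0) = k!\,b_k$, $Q^{(k)}(0) = k!\,\beta_k$), and your verification plan matches the paper's: the second block is checked against the claimed $\beta_k$ via Proposition~\ref{PropHorribleSum} after exactly the reindexing you indicate, and the first block then yields the $b_k$ via Propositions~\ref{PropBinomPropertyGeneral} and \ref{PropHorribleSum}. The one step that fails as you state it is the uniqueness argument for the $\beta$-block. The homogeneous system $\sum_{k=0}^{n-1}\beta_k(-1)^{n+p-k}/(n+p-k)! = 0$ for $p \in \llbracket 0, n-1\rrbracket$ says only that the Taylor coefficients of $e^{-z}Q(z)$, $Q(z) = \sum_k \beta_k z^k$, of orders $n, \dotsc, 2n-1$ vanish; the coefficients of orders $0, \dotsc, n-1$ are unconstrained, so this does \emph{not} force $e^{-z}Q(z)$ to vanish to order $2n$ at $0$, and the conclusion ``hence it is identically zero'' does not follow from what you wrote. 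Your first suggestion (direct nonsingularity of the ``truncated exponential'' matrix) is left unfinished, so as it stands the ``only if'' direction of the lemma is not established.

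The gap is easy to repair, in either of two ways. The soft way is to run your degree argument on the full system rather than on the $\beta$-block alone: if $(b,\beta)$ and $(b',\beta')$ both make $0$ a root of multiplicity at least $2n$, their difference $\sum_k (b_k - b_k') z^k + e^{-z}\sum_k(\beta_k - \beta_k') z^k$ is a quasipolynomial of degree at most $2n-1$ (the $z^n$ terms cancel) with a root of multiplicity at least $2n$ at $0$, hence is identically zero by the corollary of Proposition~\ref{PropPolyaSzego}, and linear independence of the functions $z^k$ and $e^{-z}z^k$ gives $b = b'$, $\beta = \beta'$; equivalently, subtract from $e^{-z}Q(z)$ its Taylor polynomial of degree $n-1$ before invoking the degree bound. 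The paper instead proves invertibility of the coefficient matrix $B$ of the second block explicitly, by exhibiting the LU factorization $B = AC$ via Proposition~\ref{PropBinomPropertyGeneral}, which gives $\det B = (-1)^n$ and also feeds the computation of the $b_k$. One further small correction: the integral factorization through Proposition~\ref{PropIntegralPExp} is not how the paper proves this lemma; it is the content of the separate Lemma~\ref{LemmFactorization} (although the time-domain argument of Section~\ref{SecFurther} does show that the factorization is equivalent to \eqref{BetaBMaxMultiplicity}, so your alternative route is viable once the uniqueness step above is in place).
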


\begin{proof}
Since the degree of the quasipolynomial $\widetilde\Delta$ is $2n$, $0$ is a root of multiplicity $2n$ of $\widetilde\Delta$ if and only if $\widetilde\Delta^{(k)}(0) = 0$ for every $k \in \llbracket 0, 2n-1\rrbracket$. Let $P, Q: \mathbb C \to \mathbb C$ be the polynomials defined by
\[
P(z) = z^n + \sum_{k=0}^{n-1}b_k z^k, \qquad Q(z) = \sum_{k=0}^{n-1} \beta_k z^k.
\]
Then $\widetilde\Delta(z) = P(z) + e^{-z} Q(z)$ for every $z \in \mathbb C$. Then, by an immediate inductive argument, one computes
\begin{equation}
\label{DeltaTildeDerivK}
\widetilde\Delta^{(k)}(z) = P^{(k)}(z) + e^{-z} \sum_{j=0}^k (-1)^{k-j} \binom{k}{j} Q^{(j)}(z), \qquad \forall z \in \mathbb C.
\end{equation}
Using \eqref{DeltaTildeDerivK} and the fact that $P$ and $Q$ are polynomials of degree $n$ and $n-1$, respectively, with $P^{(n)}(0) = n!$, one obtains that $0$ is a root of multiplicity $2n$ of $\widetilde\Delta$ if and only if
\begin{equation}
\label{SystPkQk}
\left\{
\begin{aligned}
& P^{(k)}(0) + \sum_{j=0}^k (-1)^{k-j} \binom{k}{j} Q^{(j)}(0) = 0, & \qquad & \forall k \in \llbracket 0, n-1\rrbracket, \\
& \sum_{j=0}^{n-1} (-1)^{n-j} \binom{n}{j} Q^{(j)}(0) = -n!, & & \\
& \sum_{j=0}^{n-1} (-1)^{k-j} \binom{k}{j} Q^{(j)}(0) = 0, & \qquad & \forall k \in \llbracket n+1, 2n-1\rrbracket.
\end{aligned}
\right.
\end{equation}
The $2n$ equations in \eqref{SystPkQk} form a linear system on the $2n$ variables $P^{(k)}(0), Q^{(k)}(0)$, $k \in \llbracket 0, n-1\rrbracket$, which can be written in matrix form as
\begin{equation}
\label{MatrixSystPkQk}
\left\{
\begin{aligned}
p + A q & = 0, \\
B q & = f, \\
\end{aligned}
\right.
\end{equation}
where
\[
p = \begin{pmatrix}P^{(0)}(0) \\ \vdots \\ P^{(n-1)}(0)\end{pmatrix}, \qquad
q = \begin{pmatrix}Q^{(0)}(0) \\ \vdots \\ Q^{(n-1)}(0)\end{pmatrix}, \qquad
f = \begin{pmatrix}-n! \\ 0 \\ \vdots \\ 0\end{pmatrix},
\]
\[
A = \begin{pmatrix}
\binom{0}{0} & 0 & 0 & \cdots & 0 \\
-\binom{1}{0} & \binom{1}{1} & 0 & \cdots & 0 \\
\binom{2}{0} & -\binom{2}{1} & \binom{2}{2} & \cdots & 0 \\
\vdots & \vdots & \vdots & \ddots & \vdots \\
(-1)^{n-1} \binom{n-1}{0} & (-1)^{n-2} \binom{n-1}{1} & (-1)^{n-3} \binom{n-1}{2} & \cdots & \binom{n-1}{n-1} \\
\end{pmatrix},
\]
\[
B = \begin{pmatrix}
(-1)^n \binom{n}{0} & (-1)^{n-1} \binom{n}{1} & (-1)^{n-2} \binom{n}{2} & \cdots & -\binom{n}{n-1} \\
(-1)^{n+1} \binom{n+1}{0} & (-1)^{n} \binom{n+1}{1} & (-1)^{n-1} \binom{n+1}{2} & \cdots & \binom{n+1}{n-1} \\
(-1)^{n+2} \binom{n+2}{0} & (-1)^{n+1} \binom{n+2}{1} & (-1)^{n} \binom{n+2}{2} & \cdots & -\binom{n+2}{n-1} \\
\vdots & \vdots & \vdots & \ddots & \vdots \\
-\binom{2n-1}{0} & \binom{2n-1}{1} & -\binom{2n-1}{2} & \cdots & (-1)^{n} \binom{2n-1}{n-1} \\
\end{pmatrix}.
\]

One has $B = A C$, where $C = (C_{j, k})_{j, k \in \llbracket 0, n-1\rrbracket}$ and $C_{j, k} = (-1)^{n-k+j}\binom{n}{k-j}$ for $j, k \in \llbracket 0, n-1\rrbracket$. Indeed, writing $A = (A_{j, k})_{j, k \in \llbracket 0, n-1\rrbracket}$ and $B = (B_{j, k})_{j, k \in \llbracket 0, n-1\rrbracket}$, one computes, for $j, k \in \llbracket 0, n-1\rrbracket$,
\begin{align*}
\sum_{\ell = 0}^{n-1} A_{j, \ell} C_{\ell, k} & = \sum_{\ell = 0}^{n-1} (-1)^{j-\ell} \binom{j}{\ell} (-1)^{n-k+\ell} \binom{n}{k-\ell} \displaybreak[0] \\
& = (-1)^{n-k+j} \sum_{\ell = 0}^{n-1} \binom{j}{\ell} \binom{n}{k-\ell} = (-1)^{n-k+j} \binom{n+j}{k} = B_{j, k},
\end{align*}
where we use Proposition~\ref{PropBinomPropertyGeneral}. Notice that the factorization $B = A C$ corresponds to the LU factorization of $B$. As a consequence of this factorization, one also obtains that $\det B = (-1)^n$ and, in particular, $B$ is invertible. Hence, \eqref{MatrixSystPkQk} admits a unique solution $(p, q) \in \mathbb R^{2n}$.

For $j^\prime, k^\prime, n^\prime \in \mathbb N$, let $S^{n^\prime}_{j^\prime, k^\prime}$ be defined by \eqref{HorribleSum}. Solving the second equation in \eqref{MatrixSystPkQk}, one gets
\begin{equation}
\label{QkExplicit}
Q^{(k)}(0) = (-1)^{n-1} n! \binom{2n-k-1}{n-k-1}, \qquad \forall k \in \llbracket 0, n-1\rrbracket.
\end{equation}
Indeed, let $\widetilde q = (q_k)_{k \in \llbracket 0, n-1\rrbracket}$ be defined by $q_k = (-1)^{n-1} n! \binom{2n-k-1}{n-k-1}$ for $k \in \llbracket 0, n-1\rrbracket$. Letting $x = B \widetilde q$ and writing $x = (x_j)_{j \in \llbracket 0, n-1\rrbracket}$, we have, for $j \in \llbracket 0, n-1\rrbracket$,
\begin{align*}
x_j & = \sum_{k=0}^{n-1} B_{j, k} q_k = \sum_{k=0}^{n-1} (-1)^{n+j-k} \binom{n+j}{k} (-1)^{n-1} n! \binom{2n-k-1}{n-k-1} \\
& = - n! \sum_{k=0}^{n-1} (-1)^{j-k} \binom{n+j}{k} \binom{2n-k-1}{n-k-1} = (-1)^{j+1} n! S^{n}_{n-j-1, n-1}.
\end{align*}
Hence, by Proposition \ref{PropHorribleSum},
\[
x_j = (-1)^{j+1} n! \binom{n-j-1}{n-j},
\]
and thus $x_0 = -n!$ and $x_j = 0$ for $j \in \llbracket 1, n-1\rrbracket$, which yields $x = f$. Hence $q = \widetilde q$ is a solution of the second equation of \eqref{MatrixSystPkQk} and, since this equation admits a unique solution, one deduces that \eqref{QkExplicit} holds.

One may now compute $P^{(k)}(0)$ for $k \in \llbracket 0, n-1\rrbracket$ using the first equation of \eqref{MatrixSystPkQk}. We have
\[
P^{(k)}(0) = - \sum_{j=0}^{k} (-1)^{k-j} \binom{k}{j} Q^{(j)}(0) = (-1)^{n-k} n! \sum_{j=0}^{k} (-1)^{j} \binom{k}{j} \binom{2n-j-1}{n}.
\]
Using Propositions \ref{PropBinomPropertyGeneral} and \ref{PropHorribleSum}, we obtain
\begin{align}
P^{(k)}(0) & = (-1)^{n-k} n! \sum_{j=0}^{k} (-1)^{j} \binom{k}{j} \sum_{\ell = 0}^{k-j} \binom{k-j}{\ell} \binom{2n-k-1}{n-\ell} \notag \displaybreak[0] \\
& = (-1)^{n-k} n! \sum_{\ell = 0}^{k} \left[\sum_{j=0}^{k-\ell} (-1)^{j} \binom{k}{j} \binom{k-j}{\ell} \right]\binom{2n-k-1}{n-\ell} \notag \displaybreak[0] \\
& = (-1)^{n-k} n! \sum_{\ell = 0}^{k} S^{\ell}_{0, k-\ell} \binom{2n-k-1}{n-\ell} = (-1)^{n-k} n! \binom{2n-k-1}{n-k}. \label{PkExplicit}
\end{align}
Finally, \eqref{BetaBMaxMultiplicity} follows from \eqref{QkExplicit} and \eqref{PkExplicit} by noticing that $P^{(k)}(0) = k! b_k$ and $Q^{(k)}(0) = k! \beta_k$ for $k \in \llbracket 0, n-1\rrbracket$.
\end{proof}

\begin{remark}
The proof of Lemma~\ref{LemmCoeffsBetaB} is carried out by solving the linear system \eqref{SystPkQk} on the $2n$ variables $P^{(k)}(0)$, $Q^{(k)}(0)$, $k \in \llbracket 0, n - 1\rrbracket$, which is obtained by imposing that $\widetilde\Delta^{(k)}(0) = 0$ for every $k \in \llbracket 0, 2n - 1\rrbracket$. Similar linear systems ensuring that $0$ is a multiple root of some quasipolynomial have already been obtained in the literature in some more general contexts.

This is the case, for instance, of \cite{Boussaada2016Characterizing}, where general quasipolynomials under the form \eqref{GenericQuasipolynomial} are considered and necessary and sufficient conditions for $0$ to be a root of maximal multiplicity of such quasipolynomials in terms of a linear system on their coefficients are provided. Applying \cite[Proposition~5.1]{Boussaada2016Characterizing} to the quasipolynomial $\widetilde\Delta$ from \eqref{DeltaTilde}, one obtains that $0$ is a root of maximal multiplicity $2n$ of $\widetilde\Delta$ if and only if the coefficients $b_0, \dotsc, b_{n-1}, \beta_0, \dotsc, \beta_{n-1}$ satisfy
\[
\left\{
\begin{aligned}
& b_k = - \beta_k - \sum_{\ell = 0}^{k-1} \frac{(-1)^{k-\ell} \beta_\ell}{(k - \ell)!}, & \qquad & \forall k \in \llbracket 0, n-1\rrbracket, \\
& 1 = - \sum_{\ell = 0}^{n-1} \frac{(-1)^{n - \ell} \beta_\ell}{(n - \ell)!}, & & \\
& 0 = - \sum_{\ell = 0}^{n-1} \frac{(-1)^{k - \ell} \beta_\ell}{(k - \ell)!}, & \qquad & \forall k \in \llbracket n+1, 2n-1\rrbracket.
\end{aligned}
\right.
\]
Using the fact that $P^{(k)}(0) = k! b_k$ and $Q^{(k)}(0) = k! \beta_k$ for $k \in \llbracket 0, n-1\rrbracket$, one immediately verifies that the above system is equivalent to \eqref{SystPkQk}.

Solving analytically these linear systems for general quasipolynomials under the form \eqref{GenericQuasipolynomial} is a nontrivial task. In our single-delay setting, the particular structure of the linear system allows for a solution to be analytically computed, this computation being the main part of the proof of Lemma~\ref{LemmCoeffsBetaB}.
\end{remark}

\subsection{Factorization of the characteristic quasipolynomial and dominance of the multiple root}
\label{SecDominance}

Conditions \eqref{BetaBMaxMultiplicity} from Lemma~\ref{LemmCoeffsBetaB} characterize the fact that $0$ is a root of multiplicity $2n$ of the quasipolynomial $\widetilde\Delta$ defined by \eqref{DeltaTilde}. It turns out that, under \eqref{BetaBMaxMultiplicity}, $\widetilde\Delta$ can be factorized as the product of $z^{2n}$ and an entire function expressed as an integral.

\begin{lemma}
\label{LemmFactorization}
Let $n \in \mathbb N^\ast$, $b_0, \dotsc, b_{n-1}, \beta_0, \dotsc, \beta_{n-1} \in \mathbb R$ be given by \eqref{BetaBMaxMultiplicity}, and $\widetilde\Delta$ be the quasipolynomial given by \eqref{DeltaTilde}. Then, for every $z \in \mathbb C$,
\begin{equation}
\label{FactorizationWidetildeDelta}
\widetilde\Delta(z) = \frac{z^{2n}}{(n-1)!} \int_0^1 t^{n-1} (1-t)^{n} e^{-zt} \diff t.
\end{equation}
\end{lemma}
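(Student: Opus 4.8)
The plan is to prove \eqref{FactorizationWidetildeDelta} by showing that the right-hand side, expanded via Proposition~\ref{PropIntegralPExp}, is a quasipolynomial of the form \eqref{DeltaTilde} whose coefficients are exactly those given by \eqref{BetaBMaxMultiplicity}. Concretely, set $p(t) = \frac{t^{n-1}(1-t)^n}{(n-1)!}$, a polynomial of degree $2n-1$, and apply Proposition~\ref{PropIntegralPExp} with $d = 2n-1$ to write
\[
\int_0^1 p(t) e^{-zt}\diff t = \sum_{k=0}^{2n-1} \frac{p^{(k)}(0) - p^{(k)}(1) e^{-z}}{z^{k+1}}.
\]
Multiplying by $z^{2n}$ gives
\[
z^{2n}\int_0^1 p(t)e^{-zt}\diff t = \sum_{k=0}^{2n-1} \bigl(p^{(k)}(0) - p^{(k)}(1)e^{-z}\bigr) z^{2n-1-k},
\]
which, after reindexing $j = 2n-1-k$, is a polynomial of degree at most $2n-1$ in $z$ plus $e^{-z}$ times another such polynomial; so the right-hand side of \eqref{FactorizationWidetildeDelta} is automatically a quasipolynomial of the shape $\widetilde P(z) + e^{-z}\widetilde Q(z)$ with $\deg\widetilde P, \deg\widetilde Q \le 2n-1$. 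It then remains to identify the coefficients with those of \eqref{DeltaTilde}–\eqref{BetaBMaxMultiplicity}.

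The heart of the computation is therefore evaluating the derivatives $p^{(k)}(0)$ and $p^{(k)}(1)$. Near $t=0$, $t^{n-1}$ forces $p^{(k)}(0) = 0$ for $k \le n-2$, so only $k \in \llbracket n-1, 2n-1\rrbracket$ contribute; using the Leibniz rule on $t^{n-1}(1-t)^n$ one gets, for such $k$,
\[
p^{(k)}(0) = \frac{1}{(n-1)!}\binom{k}{n-1}(n-1)!\,\frac{\diff^{\,k-n+1}}{\diff t^{\,k-n+1}}\Big[(1-t)^n\Big]_{t=0} = \binom{k}{n-1}(-1)^{k-n+1}\frac{n!}{(2n-1-k)!}.
\]
Writing the corresponding monomial $z^{2n-1-k}$ with exponent $m := 2n-1-k \in \llbracket 0, n-1\rrbracket$, this should match the claimed coefficient of $z^m$ in the non-delayed part, namely $b_m = (-1)^{n-m}\frac{n!}{m!}\binom{2n-m-1}{n-1}$ (and the leading term $m=n$, i.e.\ $k=n-1$, giving $z^n$ with coefficient $1$). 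Symmetrically, near $t=1$ the factor $(1-t)^n$ forces $p^{(k)}(1)=0$ for $k \le n-1$, so $k$ ranges over $\llbracket n, 2n-1\rrbracket$, i.e.\ the exponent $m = 2n-1-k$ ranges over $\llbracket 0, n-1\rrbracket$; the Leibniz rule at $t=1$ gives
\[
p^{(k)}(1) = \frac{1}{(n-1)!}\binom{k}{n}\,\frac{\diff^{\,k-n}}{\diff t^{\,k-n}}\Big[t^{n-1}\Big]_{t=1}\cdot n!\cdot(-1)^n = (-1)^n\binom{k}{n}\frac{n!}{(n-1)!}\frac{(n-1)!}{(2n-1-k)!},
\]
which upon simplification and the sign bookkeeping $-p^{(k)}(1)$ from Proposition~\ref{PropIntegralPExp} should reproduce $\beta_m = (-1)^{n-1}\frac{(2n-m-1)!}{m!(n-m-1)!}$.

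The main obstacle is purely bookkeeping: one must carefully apply the Leibniz rule, handle the reindexing $m = 2n-1-k$, and verify that the binomial-and-factorial expressions obtained for $p^{(k)}(0)$ and $-p^{(k)}(1)$ coincide term by term with $b_m$ and $\beta_m$ from \eqref{BetaBMaxMultiplicity}; the $k=n-1$ term must separately be checked to produce the monic leading term $z^n$. Two clean alternatives can simplify this. First, one may recognize the integral as a Beta-type integral and invoke the integral representation \eqref{EqKummerIntegral}: with $a = n$, $b = 2n+1$ one has $\int_0^1 t^{n-1}(1-t)^n e^{-zt}\diff t = \frac{\Gamma(n)\Gamma(n+1)}{\Gamma(2n+1)} M(n, 2n+1, -z)$, so \eqref{FactorizationWidetildeDelta} also exhibits the promised link to Kummer's function and sets up Section~\ref{SecDominance}. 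Second, and most economically, instead of computing all derivatives one can argue by uniqueness: the right-hand side $R(z) := \frac{z^{2n}}{(n-1)!}\int_0^1 t^{n-1}(1-t)^n e^{-zt}\diff t$ is, by the expansion above, a quasipolynomial of the form \eqref{DeltaTilde} for some real coefficients $b_0',\dots,b_{n-1}',\beta_0',\dots,\beta_{n-1}'$ (the monomial $z^n$ indeed has coefficient $1$ since $p^{(n-1)}(0) = \binom{n-1}{n-1}\cdot\frac{(n-1)!}{(n-1)!}\cdot n! \cdot \frac{1}{(n-1)!}$ works out to $1$ after the normalization), and it has a zero of order $2n$ at $z=0$ because the integrand is bounded near $z=0$; by Lemma~\ref{LemmCoeffsBetaB} the coefficients making $0$ a root of multiplicity $2n$ are unique and equal to \eqref{BetaBMaxMultiplicity}, hence $R \equiv \widetilde\Delta$. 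I would present the direct Leibniz computation as the primary proof (it is self-contained and does not lean on the Gamma-function representation), but mention the uniqueness shortcut as a remark.
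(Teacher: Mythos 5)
Your primary argument is correct and is essentially the paper's own proof read in the opposite direction: both rest on Proposition~\ref{PropIntegralPExp} together with the values of the derivatives of $t \mapsto t^{n-1}(1-t)^n$ at $0$ and $1$ (your Leibniz computation reproduces exactly \eqref{pk0} and \eqref{pk1}, which the paper obtains instead by expanding the polynomial in powers of $t$ and of $t-1$), after which the term-by-term match with \eqref{BetaBMaxMultiplicity} is the same bookkeeping. One cosmetic slip: in your parenthetical check of the leading term, the factor $n!$ should not appear (the zeroth derivative of $(1-t)^n$ at $t=0$ is $1$); your displayed general formula for $p^{(k)}(0)$ nevertheless yields the value $1$ at $k=n-1$, so monicity is fine.

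Your uniqueness shortcut is a genuinely different and valid route that the paper does not take. Proposition~\ref{PropIntegralPExp}, together with only the easy facts that $p^{(k)}(0)=0$ for $k\leq n-2$, $p^{(k)}(1)=0$ for $k\leq n-1$, and that the coefficient of $z^n$ equals $1$, shows the right-hand side of \eqref{FactorizationWidetildeDelta} is a quasipolynomial of the form \eqref{DeltaTilde}; since the integral is entire in $z$ and strictly positive at $z=0$, the factor $z^{2n}$ makes $0$ a root of multiplicity exactly $2n$, so Lemma~\ref{LemmCoeffsBetaB} forces its coefficients to be \eqref{BetaBMaxMultiplicity} and hence it coincides with $\widetilde\Delta$. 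This trades the full derivative bookkeeping for an appeal to Lemma~\ref{LemmCoeffsBetaB} (where the combinatorial work already lives); it is in the same spirit as the equivalence noted after the alternative proof in Section~\ref{SecFurther}, which the paper establishes via inverse Laplace transforms rather than via this uniqueness argument.
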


\begin{proof}
For $z \in \mathbb C \setminus \{0\}$, one has
\begin{align}
\widetilde\Delta(z) & = z^n + \sum_{k=0}^{n-1} (-1)^{n-k} \frac{n!}{k!} \binom{2n-k-1}{n-1} z^{k} + (-1)^{n-1} e^{-z} \sum_{k=0}^{n-1} \frac{(2n-k-1)!}{k! (n-k-1)!} z^k \notag \displaybreak[0] \\
& = \frac{z^{2n}}{(n-1)!} \Biggl[\frac{(n-1)!}{z^n} + \sum_{k=0}^{n-1} (-1)^{n-k} (n-1)! \frac{n!}{k!} \binom{2n-k-1}{n-1} \frac{1}{z^{2n-k}} \notag \\ 
& \hphantom{{} = \frac{z^{2n}}{(n-1)!} \Biggl[\frac{(n-1)!}{z^n}} {} + (-1)^{n-1} e^{-z} \sum_{k=0}^{n-1} (n-1)!\frac{(2n-k-1)!}{k! (n-k-1)!} \frac{1}{z^{2n-k}}\Biggr] \notag \displaybreak[0] \\
& = \frac{z^{2n}}{(n-1)!} \Biggl[\frac{(n-1)!}{z^n} + \sum_{k=n}^{2n-1} (-1)^{k-n+1} (n-1)! \frac{n!}{(2n-k-1)!} \binom{k}{n-1} \frac{1}{z^{k+1}} \notag \\ 
& \hphantom{{} = \frac{z^{2n}}{(n-1)!} \Biggl[\frac{(n-1)!}{z^n}} {} + (-1)^{n-1} e^{-z} \sum_{k=n}^{2n-1} (n-1)!\frac{k!}{(2n-k-1)! (k-n)!} \frac{1}{z^{k+1}}\Biggr] \notag \displaybreak[0] \\
& = \frac{z^{2n}}{(n-1)!} \Biggl[\sum_{k=n-1}^{2n-1} \frac{(-1)^{k-n+1} k! n!}{(2n-k-1)! (k-n+1)!} \frac{1}{z^{k+1}} \notag \\ 
& \hphantom{{} = \frac{z^{2n}}{(n-1)!} \Biggl[} {} + \sum_{k=n}^{2n-1} \frac{(-1)^{n-1} k! (n-1)!}{(2n-k-1)! (k-n)!} \frac{e^{-z}}{z^{k+1}}\Biggr]. \label{ExpansionWidetildeDelta}
\end{align}

Let $p$ be the polynomial given by $p(t) = t^{n-1}(1-t)^n$. One computes
\begin{align*}
p(t) & = t^{n-1} \sum_{k=0}^n (-1)^k \binom{n}{k} t^k = \sum_{k=n-1}^{2n-1} (-1)^{k-n+1} \binom{n}{k-n+1} t^{k} \\
& = \sum_{k=n-1}^{2n-1} \frac{(-1)^{k-n+1} k! n!}{(2n-k-1)!(k-n+1)!} \frac{t^k}{k!},
\end{align*}
and thus
\begin{equation}
\label{pk0}
p^{(k)}(0) = 
\begin{dcases*}
\frac{(-1)^{k-n+1} k! n!}{(2n-k-1)!(k-n+1)!}, & if $k \in \llbracket n-1, 2n-1\rrbracket$, \\
0, & otherwise.
\end{dcases*}
\end{equation}
Similarly, one computes
\begin{align*}
p(t) & = (-1)^n (t-1)^n ((t-1)+1)^{n-1} = (-1)^n (t-1)^n \sum_{k=0}^{n-1} \binom{n-1}{k} (t-1)^k \\
&  = (-1)^n \sum_{k=n}^{2n-1} \binom{n-1}{k-n} (t-1)^{k} = \sum_{k=n}^{2n-1} \frac{(-1)^n k! (n-1)!}{(2n-k-1)!(k-n)!} \frac{(t-1)^{k}}{k!},
\end{align*}
and thus
\begin{equation}
\label{pk1}
p^{(k)}(1) = 
\begin{dcases*}
\frac{(-1)^n k! (n-1)!}{(2n-k-1)!(k-n)!}, & if $k \in \llbracket n, 2n-1\rrbracket$, \\
0, & otherwise.
\end{dcases*}
\end{equation}

Combining \eqref{ExpansionWidetildeDelta} with \eqref{pk0} and \eqref{pk1} and using Proposition \ref{PropIntegralPExp}, one gets, for $z \in \mathbb C \setminus \{0\}$,
\[
\widetilde\Delta(z) = \frac{z^{2n}}{(n-1)!} \sum_{k=0}^{2n-1} \frac{p^{(k)}(0) - p^{(k)}(1) e^{-z}}{z^{k+1}} = \frac{z^{2n}}{(n-1)!} \int_0^1 t^{n-1} (1-t)^n e^{-zt} \diff t.
\]
Since \eqref{FactorizationWidetildeDelta} trivially holds for $z = 0$, one finally deduces that \eqref{FactorizationWidetildeDelta} holds for every $z \in \mathbb C$.
\end{proof}

The factorization \eqref{FactorizationWidetildeDelta} can also be written, thanks to \eqref{EqKummerIntegral}, as
\begin{equation}
\label{FactorizationKummer}
\widetilde\Delta(z) = \frac{n!}{(2n)!} z^{2n} M(n, 2n+1, -z),
\end{equation}
where $M$ is Kummer's confluent hypergeometric function defined in \eqref{DefiConfluent}. The next lemma uses properties of the roots of $M$ in order to deduce that $0$ is a dominant root of $\widetilde\Delta$.

\begin{lemma}
\label{LemmDominancy}
Let $n \in \mathbb N^\ast$, $b_0, \dotsc, b_{n-1}, \beta_0, \dotsc, \beta_{n-1} \in \mathbb R$ be given by \eqref{BetaBMaxMultiplicity}, and $\widetilde\Delta$ be the quasipolynomial given by \eqref{DeltaTilde}. Let $z$ be a root of $\widetilde\Delta$ with $z \neq 0$. Then $\Real z < 0$.
\end{lemma}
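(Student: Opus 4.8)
The plan is to exploit the factorization of $\widetilde\Delta$ just established and the known localization of the zeros of Kummer's confluent hypergeometric function. By Lemma~\ref{LemmFactorization}, or equivalently by the identity \eqref{FactorizationKummer}, for every $z \in \mathbb C$ one has
\[
\widetilde\Delta(z) = \frac{n!}{(2n)!}\, z^{2n}\, M(n, 2n+1, -z),
\]
with $M$ as in \eqref{DefiConfluent}. Since $z^{2n} \neq 0$ whenever $z \neq 0$, it follows at once that if $z \neq 0$ satisfies $\widetilde\Delta(z) = 0$, then necessarily $M(n, 2n+1, -z) = 0$.

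First I would observe that the pair of parameters $(n, 2n+1)$ fits exactly the pattern $(a, 2a+1)$ of the first item of Proposition~\ref{PropWynn}, with $a = n$; moreover $a = n \geq 1 > -\tfrac12$, so that Proposition~\ref{PropWynn} applies. Hence any complex zero $w$ of $M(n, 2n+1, \cdot)$ satisfies $\Real w > 0$. Applying this with $w = -z$ gives $\Real(-z) > 0$, that is, $\Real z < 0$, which is the claim.

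There is essentially no obstacle left at this stage: the substantive work — the explicit determination of the coefficients $b_k, \beta_k$ in \eqref{BetaBMaxMultiplicity} and the resulting integral factorization \eqref{FactorizationWidetildeDelta}, together with the rewriting \eqref{FactorizationKummer} via \eqref{EqKummerIntegral} — has already been carried out, and the delicate zero-free half-plane statement is imported from \cite{Wynn1973Zeros} as Proposition~\ref{PropWynn}. The only point that deserves (minimal) care is checking that the parameters match the hypotheses of Proposition~\ref{PropWynn}, and in particular that one is in case (a) rather than case (b): writing $M(n, 2n+1, \cdot)$ as $M(a, 2a+1, \cdot)$ forces $a = n$, whereas writing it as $M(a+1, 2a+1, \cdot)$ would require $2a+1 = 2n+1$ and $a+1 = n$ simultaneously, which is impossible; thus the correct conclusion is $\Real z < 0$ and not the opposite inequality.
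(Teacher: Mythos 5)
Your argument is correct and coincides with the paper's own proof: both rely on the factorization \eqref{FactorizationKummer} from Lemma~\ref{LemmFactorization} and on Proposition~\ref{PropWynn}(a) with $a = n$ to conclude that $\Real(-z) > 0$ for any nonzero root $z$, hence $\Real z < 0$. The extra check that the parameters fall under case (a) rather than (b) of Proposition~\ref{PropWynn} is a harmless (and correct) precaution.
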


\begin{proof}
By Lemma~\ref{LemmFactorization}, $\widetilde\Delta$ admits the factorization \eqref{FactorizationKummer}. Hence, if $z$ is a root of $\widetilde\Delta$ with $z \neq 0$, then $-z$ must be a root of $M(n, 2n+1, \cdot)$. It follows from Proposition~\ref{PropWynn} that $\Real(-z) > 0$, and thus $\Real(z) < 0$.
\end{proof}

\subsection{Conclusion of the proof of Theorem~\ref{MainTheo}}

We may now use Lemmas~\ref{LemmCoeffsInverseTransform}, \ref{LemmCoeffsBetaB}, and \ref{LemmDominancy} to conclude the proof of Theorem~\ref{MainTheo}.

\begin{proof}[Proof of Theorem \ref{MainTheo}]
To prove \ref{ItemA}, define $\widetilde\Delta$ from $\Delta$ as in \eqref{DefiDeltaTilde}. One immediately verifies that $s_0$ is a root of multiplicity $2n$ of $\Delta$ if and only if $0$ is a root of multiplicity $2n$ of $\widetilde\Delta$. The result then follows from Lemmas \ref{LemmCoeffsInverseTransform} and \ref{LemmCoeffsBetaB}.

Part \ref{ItemB} can be shown by noticing that, if $s$ is a root of $\Delta$ with $s \neq s_0$, then, by \eqref{DefiDeltaTilde}, $z = \tau(s - s_0)$ is a root of $\Delta$ with $z \neq 0$. Hence, by Lemma \ref{LemmDominancy}, $\Real(\tau(s - s_0)) < 0$, showing, since $\tau > 0$, that $\Real s < \Real s_0$.

Finally, \ref{ItemC} follows from \ref{ItemB}, \eqref{RelationS0ANMinus1Tau}, and the fact that the trivial solution of \eqref{MainSystTime} is exponentially stable if and only if its spectral abscissa is negative.
\end{proof}

\section{Further remarks on the factorization of the cha\-rac\-te\-ris\-tic quasipolynomial}
\label{SecFurther}

We have used in Section~\ref{SecDominance} the factorization \eqref{FactorizationWidetildeDelta} of the quasipolynomial $\widetilde\Delta$. Recalling the relation \eqref{DefiDeltaTilde} between $\widetilde\Delta$ and $\Delta$, one can rewrite the factorization \eqref{FactorizationWidetildeDelta} in terms of the characteristic quasipolynomial $\Delta$ of \eqref{MainSystTime} as
\[
\Delta(s) = \frac{\tau^n (s - s_0)^{2n}}{(n-1)!} \int_0^1 t^{n-1} (1-t)^n e^{-\tau (s - s_0) t} \diff t.
\]

The proof of the factorization \eqref{FactorizationWidetildeDelta} in Lemma~\ref{LemmFactorization} is based on Proposition~\ref{PropIntegralPExp}, which can be interpreted in terms of Laplace transforms as the computation of the Laplace transform of the function $t \mapsto p(t) \chi_{[0, 1]}(t)$. In this sense, the proof of Lemma~\ref{LemmFactorization} can be seen as the identification of two functions, $\widetilde\Delta$ and the right-hand side of \eqref{FactorizationWidetildeDelta}, in the Laplace domain with Laplace variable $z$. Lemma~\ref{LemmFactorization} can also be proved in the time domain by using inverse Laplace transforms, as we detail now.

\begin{proof}[Alternative proof of Lemma~\ref{LemmFactorization}]
We rewrite \eqref{FactorizationWidetildeDelta} for $z \in \mathbb C \setminus \{0\}$ as
\begin{equation}
\label{ProofTimeDomain1}
\frac{\widetilde\Delta(z)}{z^{2n}} = \frac{1}{(n-1)!} \int_0^1 t^{n-1} (1-t)^n e^{-z t} \diff t.
\end{equation}
Using the explicit expression \eqref{DeltaTilde} of $\widetilde\Delta$, one gets that \eqref{ProofTimeDomain1} is equivalent to
\begin{equation}
\label{ProofTimeDomain2}
\frac{1}{z^n} + \sum_{k=0}^{n-1} \frac{b_k}{z^{2n - k}} + e^{-z} \sum_{k=0}^{n-1} \frac{\beta_k}{z^{2n - k}} = \frac{1}{(n-1)!} \int_0^1 t^{n-1} (1-t)^n e^{-z t} \diff t.
\end{equation}
Applying the inverse Laplace transform to both sides of \eqref{ProofTimeDomain2}, we get that having \eqref{ProofTimeDomain2} for a.e.\ $z \in \mathbb C$ is equivalent to
\begin{multline*}
\frac{t^{n-1}}{(n-1)!} H(t) + H(t) \sum_{k=0}^{n-1} b_k \frac{t^{2n - k - 1}}{(2n - k - 1)!} + H(t-1) \sum_{k=0}^{n-1} \beta_k \frac{(t-1)^{2n - k - 1}}{(2n - k - 1)!} \\ = \frac{1}{(n-1)!} t^{n-1} (1-t)^n \chi_{[0, 1]}(t), \qquad \text{ for a.e.\ } t \in \mathbb R,
\end{multline*}
where $H$ denotes the Heaviside step function, i.e., the indicator function of $[0, +\infty)$. This is equivalent to
\begin{equation}
\label{ProofTimeDomain3}
\left\{
\begin{aligned}
& \frac{t^{n-1}}{(n-1)!} + \sum_{k=0}^{n-1} b_k \frac{t^{2n - k - 1}}{(2n - k - 1)!} = \frac{1}{(n-1)!} t^{n-1} (1-t)^n, & \quad & \text{for } t \in [0, 1], \\
& \frac{t^{n-1}}{(n-1)!} + \sum_{k=0}^{n-1} b_k \frac{t^{2n - k - 1}}{(2n - k - 1)!} + \sum_{k=0}^{n-1} \beta_k \frac{(t-1)^{2n - k - 1}}{(2n - k - 1)!} = 0, & & \text{for } t \in [1, +\infty).
\end{aligned}
\right.
\end{equation}
Since, in both above equalities, both sides are analytic functions of $t$, both equalities hold for every $t \in \mathbb R$, which allows one to rewrite \eqref{ProofTimeDomain3} equivalently as
\begin{equation}
\label{ProofTimeDomain4}
\left\{
\begin{aligned}
& \frac{t^{n-1}}{(n-1)!} + \sum_{k=0}^{n-1} b_k \frac{t^{2n - k - 1}}{(2n - k - 1)!} = \frac{1}{(n-1)!} t^{n-1} (1-t)^n, & \quad & \text{for all } t \in \mathbb R, \\
& \frac{1}{(n-1)!} t^{n-1} (1-t)^n + \sum_{k=0}^{n-1} \beta_k \frac{(t-1)^{2n - k - 1}}{(2n - k - 1)!} = 0, & & \text{for all } t \in \mathbb R.
\end{aligned}
\right.
\end{equation}

The first equation in \eqref{ProofTimeDomain4} can be rewritten as
\begin{equation}
\label{ProofTimeDomain5}
1 + \sum_{k=0}^{n-1} b_k t^{n - k} \frac{(n-1)!}{(2n - k - 1)!} = (1-t)^n, \qquad \text{for all } t \in \mathbb R,
\end{equation}
and, since both sides in the above equality are polynomials in $t$, they are equal if and only if their coefficients are equal. Using the binomial expansion for $(1-t)^n$, one obtains that \eqref{ProofTimeDomain5} is equivalent to
\[b_k \frac{(n-1)!}{(2n - k - 1)!} = \binom{n}{k} (-1)^{n-k}, \qquad \text{for all } k \in \llbracket 0, n-1\rrbracket,\]
i.e.,
\begin{equation}
\label{ProofTimeDomain6}
b_k = (-1)^{n-k} \binom{n}{k} \frac{(2n - k - 1)!}{(n-1)!}, \qquad \text{for all } k \in \llbracket 0, n-1\rrbracket.
\end{equation}

Similarly, the second equation in \eqref{ProofTimeDomain4} can be rewritten as
\begin{equation}
\label{ProofTimeDomain7}
\frac{t^{n-1}}{(n-1)!} (-1)^n + \sum_{k=0}^{n-1} \beta_k \frac{(t-1)^{n - k - 1}}{(2n - k - 1)!} = 0, \qquad \text{for all } t \in \mathbb R
\end{equation}
and, by using the binomial expansion in the term $t^{n-1} = ((t-1) + 1)^{n-1}$ and identifying coefficients of the terms of same degree, one obtains that \eqref{ProofTimeDomain7} is equivalent to
\[
\frac{(-1)^n}{(n-1)!} \binom{n-1}{k} + \frac{\beta_k}{(2n-k-1)!} = 0, \qquad \text{for all } k \in \llbracket 0, n-1\rrbracket,
\]
i.e.,
\begin{equation}
\label{ProofTimeDomain8}
\beta_k = (-1)^{n-1} \frac{(2n - k - 1)!}{k! (n - k - 1)!}, \qquad \text{for all } k \in \llbracket 0, n-1\rrbracket.
\end{equation}

To sum up, we have thus shown that the equality \eqref{FactorizationWidetildeDelta} for a.e.\ $z \in \mathbb C$ is equivalent to \eqref{ProofTimeDomain6} and \eqref{ProofTimeDomain8}, which are exactly \eqref{BetaBMaxMultiplicity}, yielding the conclusion.
\end{proof}

\begin{remark}
The above proof actually shows more than stated in Lemma~\ref{LemmFactorization}, namely the fact that the identity \eqref{FactorizationWidetildeDelta} is equivalent to the coefficients $b_0, \dotsc, b_{n-1}, \beta_0, \dotsc, \beta_{n-1}$ being given by \eqref{BetaBMaxMultiplicity}. The implication stated in the lemma is sufficient to our purposes, i.e., to conclude the proof of Theorem~\ref{MainTheo}.
\end{remark}

\section{Illustrative examples}\label{Appli}

\subsection{Improving the decay rate of a stable second-order control system without instantaneous velocity observation}
\label{SecExplSecondOrder}

Consider a stable second-order control system written under the form
\begin{equation}
\label{Ex1SecondOrderSystem}
y^{\prime\prime}(t) + 2 \zeta \omega y^\prime(t) + \omega^2 y(t) = u(t),
\end{equation}
where $\zeta, \omega$ are positive real numbers and $u(t)$ is a control input. Under no control, the characteristic polynomial of this equation is $\Delta_0(s) = s^2 + 2 \zeta \omega s + \omega^2$, whose roots are $s_\pm = -\zeta\omega \pm i \omega \sqrt{1 - \zeta^2}$ if $0 < \zeta \leq 1$ or $s_{\pm} = -\zeta\omega \pm \omega \sqrt{\zeta^2 - 1}$ if $\zeta \geq 1$. Hence, the spectral abscissa $\gamma_0$ of $\Delta$ is given by
\[
\gamma_0 = \begin{dcases*}
-\zeta\omega, & if $0 < \zeta \leq 1$, \\
-\omega \left[\zeta - \sqrt{\zeta^2 - 1}\right], & if $\zeta \geq 1$.
\end{dcases*}
\]
In particular, $\gamma_0 < 0$ and the system is exponentially stable.

A classical problem in control theory is to choose the control $u$ in feedback form in order to improve the stability properties of \eqref{Ex1SecondOrderSystem}, and more precisely to decrease the value of the spectral abscissa $\gamma_0$. If one may choose $u(t)$ as a function of both instantaneous measures $y(t)$ and $y^\prime(t)$, then the linear feedback
\begin{equation}
\label{Ex1FullFeedback}
u(t) = -a_1 y^\prime(t) - a_0 y(t)
\end{equation}
for some parameters $a_0, a_1 \in \mathbb R$ yields the closed-loop equation
\begin{equation}
\label{Ex1ClosedLoopFullFeedback}
y^{\prime\prime}(t) + (a_1 + 2 \zeta \omega) y^\prime(t) + (\omega^2 + a_0) y(t) = 0.
\end{equation}
Hence, by choosing the coefficients $a_0$, $a_1$ appropriately, one may place the roots of the characteristic polynomial of \eqref{Ex1ClosedLoopFullFeedback} anywhere in the complex plane (as long as they are real or form a complex-conjugate pair), which allows to obtain arbitrary values for the corresponding spectral abscissa, and hence arbitrary exponential decay rates of solutions.

In order to implement the control law \eqref{Ex1FullFeedback} for a practical system, one must be able to obtain instantaneous measures of $y(t)$ and $y^\prime(t)$ in order to use them in an instantaneous computation of the control $u(t)$ to be applied to the system. In some situations, the measure of $y(t)$ may be available sufficiently fast in order to be considered approximately instantaneous, but good approximations of the velocity $y^\prime(t)$ may require extra time, due either to a velocity estimation procedure from the signal $y(t)$ or to slower measure processes.

If one uses only a position feedback of the form $u(t) = - a_0 y(t)$ in \eqref{Ex1SecondOrderSystem}, the closed-loop system becomes
\[
y^{\prime\prime}(t) + 2 \zeta \omega y^\prime(t) + (\omega^2 + a_0) y(t) = 0,
\]
whose characteristic equation $\Delta_p(s) = s^2 + 2 \zeta\omega s + \omega^2 + a_0$ admits as roots $s_\pm = -\zeta\omega \pm i \omega \sqrt{1 + a_0 - \zeta^2}$ if $a_0 \geq \zeta^2 - 1$ and $s_{\pm} = -\zeta\omega \pm \omega \sqrt{\zeta^2 - 1 - a_0}$ if $a_0 \leq \zeta^2 - 1$. The spectral abscissa $\gamma_p$ of $\Delta_p$ is then
\[
\gamma_p = \begin{dcases*}
-\zeta\omega, & if $a_0 \geq \zeta^2 - 1$, \\
-\omega\left[\zeta - \sqrt{\zeta^2 - 1 - a_0}\right], & if $a_0 \leq \zeta^2 - 1$.
\end{dcases*}
\]
The spectral abscissa $\gamma_p$ can be minimized by choosing any $a_0 \in [\zeta^2 - 1, +\infty)$, in which case $\gamma_p = - \zeta\omega$. This coincides with $\gamma_0$ if $0 < \zeta \leq 1$ and improves $\gamma_0$ only in the case $\zeta > 1$.

The spectral abscissa $\gamma_p$ can be improved by considering a delayed feedback law when the velocity $y^\prime(t)$ is available for measure after some delay $\tau > 0$. Consider the control law
\begin{equation}
\label{Ex1Feedback}
u(t) = - a_0 y(t) - \alpha_1 y^\prime(t - \tau) - \alpha_0 y(t - \tau)
\end{equation}
in \eqref{Ex1SecondOrderSystem}, which yields the closed-loop system
\begin{equation}
\label{Ex1DelayedFeedback}
y^{\prime\prime}(t) + 2 \zeta \omega y^\prime(t) + (\omega^2 + a_0) y(t) + \alpha_1 y^\prime(t - \tau) + \alpha_0 y(t - \tau) = 0.
\end{equation}
Equation \eqref{Ex1DelayedFeedback} is of the form \eqref{MainSystTime} and its characteristic quasipolynomial is
\[
\Delta(s) = s^2 + 2 \zeta \omega s + \omega^2 + a_0 + e^{- s \tau} (\alpha_1 s + \alpha_0).
\]
The conditions \eqref{Coeffs} on the coefficients of $\Delta$ are satisfied for some $s_0 \in \mathbb R$ if and only if
\[
\begin{aligned}
2 \zeta \omega & = -\frac{4}{\tau} - 2 s_0, & \qquad\qquad\omega^2 + a_0 & = \frac{6}{\tau^2} + \frac{4}{\tau}s_0 + s_0^2, \\
\alpha_1 & = -\frac{2}{\tau} e^{s_0 \tau}, & \alpha_0 & = \frac{2}{\tau} e^{s_0 \tau} \left(s_0 - \frac{3}{\tau}\right).
\end{aligned}
\]
Hence, if one chooses $s_0, a_0, \alpha_1, \alpha_0$ as
\begin{equation}
\label{ExChoicesAAlpha}
\begin{aligned}
s_0 & = - \zeta \omega - \frac{2}{\tau}, & \qquad\qquad a_0 & = \frac{6}{\tau^2} + \frac{4}{\tau}s_0 + s_0^2 - \omega^2, \\
\alpha_1 & = -\frac{2}{\tau} e^{s_0 \tau}, & \alpha_0 & = \frac{2}{\tau} e^{s_0 \tau} \left(s_0 - \frac{3}{\tau}\right),
\end{aligned}
\end{equation}
then \eqref{Coeffs} is satisfied and, by Theorem~\ref{MainTheo}, $s_0$ is a strictly dominant root of $\Delta$ of multiplicity $4$. In particular, the spectral abscissa $\gamma$ of $\Delta$ is $\gamma = s_0 = - \zeta\omega - \frac{2}{\tau}$, which is strictly smaller than both $\gamma_0$ and $\gamma_p$. Hence, even when the instantaneous velocity $y^\prime(t)$ is not available for measure, one may design a control law depending on the delayed velocity $y^\prime(t - \tau)$ that stabilizes the closed-loop system with a spectral abscissa strictly better than both the open-loop one, $\gamma_0$, and the one obtained with only a position feedback, $\gamma_p$.

\begin{figure}[ht]
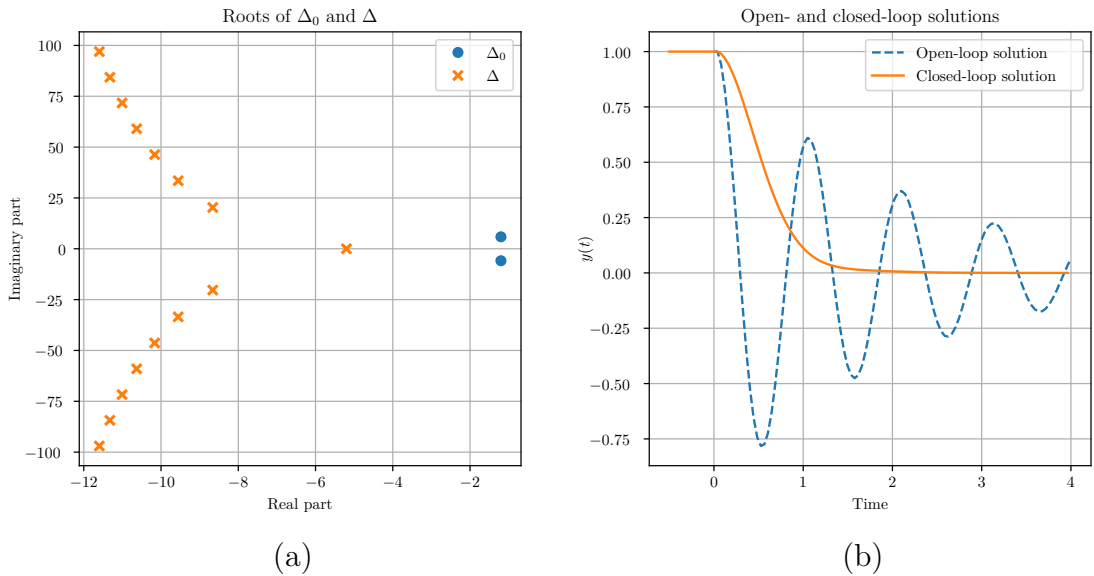

\centering
\begin{tabular}{@{} >{\centering} m{0.5\textwidth} @{} >{\centering} m{0.5\textwidth} @{}}
\resizebox{0.5\textwidth}{!}{\input{Figures/roots.pgf}} & \resizebox{0.5\textwidth}{!}{\input{Figures/sols.pgf}} \tabularnewline
(a) & (b) \tabularnewline
\end{tabular}
\caption{Comparison between the open-loop system \eqref{Ex1SecondOrderSystem} with $u = 0$ and the closed-loop system with time-delay in the velocity \eqref{Ex1DelayedFeedback} for $\zeta = 0.2$, $\omega = 6$, and a delay $\tau = 0.5$. (a) Roots of $\Delta_0$ (circles) and $\Delta$ (crosses). (b) Solution of \eqref{Ex1SecondOrderSystem} with $y(0) = 1$, $y^\prime(0) = 0$, $u(t) = 0$ (dashed line) and solution of \eqref{Ex1DelayedFeedback} with $y(t) = 1$, $y^\prime(t) = 0$ for $t \leq 0$ (solid line).}
\label{FigOLCL}
\end{figure}

A comparison between the open-loop system corresponding to \eqref{Ex1SecondOrderSystem} with $u(t) = 0$ and the closed-loop system \eqref{Ex1DelayedFeedback} with the feedback law \eqref{Ex1Feedback} is provided in Figure~\ref{FigOLCL} for the choice of parameters $\zeta = 0.2$, $\omega = 6$, and $\tau = 0.5$. In this case, the spectral abscissa $\gamma_0$ is $\gamma_0 = -1.2$, the root of multiplicity $4$ of $\Delta$ is $s_0 = -5.2$, the spectral abscissa $\gamma$ of $\Delta$ is also $\gamma = s_0 = -5.2$, and the coefficients of the feedback law \eqref{Ex1Feedback} ensuring that $s_0$ is a root of multiplicity $4$ are given by $a_0 = -26.56$, $\alpha_1 \approx -0.2971$, and $\alpha_0 \approx -3.327$. Figure~\ref{FigOLCL}(a) shows that, even though the spectrum of \eqref{Ex1DelayedFeedback} is infinite, its spectral abscissa is indeed smaller than that of \eqref{Ex1SecondOrderSystem}, which has a finite spectrum. The behavior of solutions is illustrated in Figure~\ref{FigOLCL}(b), in which the solution of \eqref{Ex1SecondOrderSystem} with initial condition $y(0) = 1$ and $y^\prime(0) = 0$ converges to $0$ much slower than the solution of the closed-loop system \eqref{Ex1DelayedFeedback} with initial condition defined by $y(t) = 1$ and $y^\prime(t) = 0$ for $t \leq 0$.

\subsection{Control of a transonic flow in a wind tunnel}

We consider in this section the application of Theorem~\ref{MainTheo} to the control of a transonic flow in a wind tunnel. The analysis of transonic flows is a challenging problem in compressible fluid dynamics, since a full model of the flow would involve considering the Navier--Stokes equations in a three-dimensional domain and boundary controls for temperature and pressure regulation. A first simplification presented in \cite{Tripp1983Development} assumes that the tunnel is a one-dimensional tube of varying cross-sectional area and that the flow is uniform across every cross section, which allows one to described the flow through a coupled system of nonlinear partial differential equations in one space dimension.

A further simplified model was presented in \cite{Armstrong1981Application} in order to analyze the response of the Mach number of the flow to changes in the guide vane angle. Instead of using partial differential equations, propagation phenomena are modeled in \cite{Armstrong1981Application} through a time-delay, leading to the time-delay system
\begin{equation}
\label{EqApplication}
\left\{
\begin{aligned}
\kappa m^\prime(t) + m(t) & = k \vartheta(t - \tau_0), \\
\vartheta^{\prime\prime}(t) + 2 \zeta \omega \vartheta^\prime(t) + \omega^2 \vartheta(t) & = \omega^2 u(t),
\end{aligned}
\right.
\end{equation}
in which $m$, $\vartheta$, and $u$ represent perturbations of the Mach number of the flow, the guide vane angle, and the input of the guide vane actuator, respectively, with respect to steady-state values. The parameters $\kappa$ and $k$ depend on the steady-state operating point and are assumed to be constant as long as $m$, $\vartheta$, and $u$ remain small, and satisfy $\kappa > 0$ and $k < 0$. The parameters $\zeta \in (0, 1)$ and $\omega > 0$ come from the design of the guide vane angle actuator and are thus independent of the operating point. The time-delay $\tau_0$ is assumed to depend only on the temperature of the flow. In the absence of control ($u(t) = 0$), the open-loop system \eqref{EqApplication} is exponentially stable.

The design of exponentially stabilizing feedback laws for \eqref{EqApplication} improving its stability properties has been considered, for instance, in \cite{Manitius1984Feedback}, in which the author designs controllers depending linearly on the state variables and on integrals of some of the state variables over an interval of length equal to the time-delay. This leads to a closed-loop system with finite spectrum. However, the practical implementation of these controllers is difficult due to the integral terms, which motivates the research for control laws with reduced implementation complexity. In general, one may not expect to obtain reduced-complexity controllers guaranteeing a closed-loop system with finite spectrum, but the multiplicity-induced-dominancy techniques developed in this paper are a good candidate for proposing reduced-complexity controllers while dealing with systems with infinite spectrum.

The goal of this section is to illustrate how Theorem~\ref{MainTheo} can be used to obtain a feedback controller for \eqref{EqApplication} improving its open-loop characteristics, with a reduced complexity with respect to the controller proposed in \cite{Manitius1984Feedback}. We assume that, up to redesigning the guide vane angle actuator, one may choose the values of its parameters $\zeta \in (0, 1)$ and $\omega > 0$. The control law we propose is
\begin{equation}
\label{ApplicationControlLaw}
u(t) = \beta_0 m(t - \tau_1) + \beta_1 m^\prime(t - \tau_1) + \beta_2 m^{\prime\prime}(t - \tau_1),
\end{equation}
where $\tau_1 > 0$ should be greater than or equal to the time-delay corresponding to measuring $m$ and its first two derivatives. Inserting \eqref{ApplicationControlLaw} into \eqref{EqApplication}, one obtains that the closed-loop characteristic quasipolynomial $\Delta$ is given by
\begin{multline}
\label{ApplicationDelta}
\frac{\Delta(s)}{\kappa} = s^{3} + \left(2 \omega \zeta + \frac{1}{\kappa}\right) s^{2} + \left(\omega^{2} + \frac{2 \omega \zeta}{\kappa}\right) s + \frac{\omega^{2}}{\kappa} \\ {} + \left(- \frac{\beta_{2} k \omega^{2}}{\kappa} s^{2} - \frac{\beta_{1} k \omega^{2}}{\kappa} s - \frac{\beta_{0} k \omega^{2}}{\kappa}\right) e^{- s \left(\tau_{0} + \tau_{1}\right)},
\end{multline}
where the division by $\kappa$ is performed in order to obtain a quasipolynomial under the form \eqref{Delta}, for which the coefficient of the term of highest degree $s^3$ is $1$. As a consequence of Theorem~\ref{MainTheo}, one gets the following result.

\begin{proposition}
\label{PropExpl}
Let $r_0 = -3 - \sqrt[3]{9} + \sqrt[3]{3}$ and $s_0 \in \mathbb R$. Then $s_0$ is a root of maximal multiplicity $6$ of the quasipolynomial $\Delta$ from \eqref{ApplicationDelta} if and only if
\begin{subequations}
\label{ExplConditionsMaximalMultiplicity}
\begin{align}
s_0 & = \frac{r_0}{\tau} - \frac{1}{\kappa}, \label{PropS0} \displaybreak[0] \\
\omega & = \sqrt{- \kappa \left(s_{0}^{3} + \frac{9 s_{0}^{2}}{\tau} + \frac{36 s_{0}}{\tau^{2}} + \frac{60}{\tau^{3}}\right)}, \label{PropOmega} \displaybreak[0] \\
\zeta & = - \frac{3 s_{0}}{2 \omega} - \frac{9}{2 \omega \tau} - \frac{1}{2 \omega \kappa}, \label{PropZeta} \displaybreak[0] \\
\beta_{0} & = - \frac{3 \kappa \left(s_{0}^{2} \tau^{2} - 8 s_{0} \tau + 20\right) e^{s_{0} \tau}}{k \omega^{2} \tau^{3}}, \label{PropBeta0} \displaybreak[0] \\
\beta_{1} & = \frac{6 \kappa \left(s_{0} \tau - 4\right) e^{s_{0} \tau}}{k \omega^{2} \tau^{2}}, \displaybreak[0] \\
\beta_{2} & = - \frac{3 \kappa e^{s_{0} \tau}}{k \omega^{2} \tau}, \label{PropBeta2}
\end{align}
\end{subequations}
where $\tau = \tau_0 + \tau_1$. Moreover, if \eqref{ExplConditionsMaximalMultiplicity} is satisfied, then $s_0 < 0$, $s_0$ is a strictly dominant root of $\Delta$, and the trivial solution of \eqref{EqApplication} with the control law \eqref{ApplicationControlLaw} is exponentially stable. 
\end{proposition}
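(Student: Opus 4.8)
The plan is to recognize Proposition~\ref{PropExpl} as an explicit instance of Theorem~\ref{MainTheo} with $n = 3$. Dividing \eqref{ApplicationDelta} by $\kappa$, the closed-loop characteristic function is a quasipolynomial of the form \eqref{Delta} with $n = 3$, delay $\tau = \tau_0 + \tau_1$, non-delayed coefficients $a_0 = \omega^2/\kappa$, $a_1 = \omega^2 + 2\omega\zeta/\kappa$, $a_2 = 2\omega\zeta + 1/\kappa$, and delayed coefficients $\alpha_j = -\beta_j k\omega^2/\kappa$ for $j \in \{0,1,2\}$. By Theorem~\ref{MainTheo}\ref{ItemA}, $s_0$ is a root of multiplicity $6$ of $\Delta$ if and only if the six identities in \eqref{Coeffs} hold for $n = 3$ and $k \in \{0,1,2\}$. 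I would first write out the right-hand sides of \eqref{Coeffs} explicitly in this case; this is a short computation with small binomial coefficients and gives, for the non-delayed part, $a_2 = -3 s_0 - 9/\tau$, $a_1 = 3 s_0^2 + 18 s_0/\tau + 36/\tau^2$, $a_0 = -s_0^3 - 9 s_0^2/\tau - 36 s_0/\tau^2 - 60/\tau^3$, together with analogous closed-form expressions for $\alpha_0, \alpha_1, \alpha_2$.

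The resulting system of six equations is then solved for the five parameters $\omega, \zeta, \beta_0, \beta_1, \beta_2$ together with the quantity $s_0$. The three $\alpha_k$ equations are linear in $\beta_k$; since $k \neq 0$ and $\omega \neq 0$, they give at once \eqref{PropBeta0}--\eqref{PropBeta2}. The $a_2$ equation is $2\omega\zeta + 1/\kappa = -3 s_0 - 9/\tau$, which is exactly \eqref{RelationS0ANMinus1Tau} for $n = 3$ written through $a_2 = 2\omega\zeta + 1/\kappa$, and yields \eqref{PropZeta} once $\omega$ is known. The $a_0$ equation is $\omega^2/\kappa = -s_0^3 - 9 s_0^2/\tau - 36 s_0/\tau^2 - 60/\tau^3$, i.e.\ \eqref{PropOmega}.

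The core of the proof is the remaining $a_1$ equation. Eliminating $\omega$ and $\zeta$ from it by means of the two previous relations leaves a single cubic equation in $s_0$ whose coefficients involve $\tau$ and $\kappa$; I expect the decisive simplification to be that, under the substitution $r = \tau\bigl(s_0 + 1/\kappa\bigr)$, every term containing $\kappa$ cancels and this cubic becomes $r^3 + 9 r^2 + 36 r + 60 = 0$. Depressing it via $y = r + 3$ gives $y^3 + 9 y + 6 = 0$, whose discriminant $-4\cdot 9^3 - 27\cdot 6^2 < 0$ shows it has a unique real root; Cardano's formula evaluates it as $y = \sqrt[3]{3} - \sqrt[3]{9}$, so $r = r_0$ and $s_0 = r_0/\tau - 1/\kappa$, which is \eqref{PropS0}. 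This elimination and identification of the cubic is the step I expect to demand the most care. One must also check that the value produced for $\omega$ is real and positive: setting $g(s) = s^3 + 9 s^2/\tau + 36 s/\tau^2 + 60/\tau^3$, one has $g'(s) = 3\bigl((s + 3/\tau)^2 + 3/\tau^2\bigr) > 0$, so $g$ is strictly increasing, while $r_0^3 + 9 r_0^2 + 36 r_0 + 60 = 0$ gives $g(r_0/\tau) = 0$; since $s_0 = r_0/\tau - 1/\kappa < r_0/\tau$ one gets $g(s_0) < 0$, hence $\omega^2 = -\kappa\, g(s_0) > 0$. The same inequality, together with $r_0 < 0$ and $1/\kappa > 0$, gives $s_0 < 0$.

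Since each of the above manipulations is an equivalence (the cubic having a unique real root and $s_0$ being assumed real), this establishes both directions of the stated characterization. Finally, when \eqref{ExplConditionsMaximalMultiplicity} holds, all of \eqref{Coeffs} is satisfied, so by Theorem~\ref{MainTheo}\ref{ItemB} the root $s_0$ is strictly dominant for $\Delta$; hence the spectral abscissa of the closed-loop system \eqref{EqApplication}--\eqref{ApplicationControlLaw} equals $s_0 < 0$, which (equivalently, by Theorem~\ref{MainTheo}\ref{ItemC} and $a_2 > -9/\tau$) yields exponential stability of its trivial solution.
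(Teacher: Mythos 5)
Your proposal is correct and follows essentially the same route as the paper: Theorem~\ref{MainTheo}\ref{ItemA} with $n=3$ gives exactly the six equations you list, the $\beta_j$ are read off from the delayed coefficients, $\omega$ and $\zeta$ come from the $a_0$ and $a_2$ equations, and the remaining $a_1$ equation reduces, after eliminating $\omega$ and $\zeta$, to the cubic $r^3+9r^2+36r+60=0$ in $r=\tau\bigl(s_0+\tfrac{1}{\kappa}\bigr)$, whose unique real root is $r_0$. The cancellation you anticipate does hold; the paper obtains it by recognizing the residual relation as the Taylor identity $Q(s_0\tau)+Q'(s_0\tau)\tfrac{\tau}{\kappa}+\tfrac12 Q''(s_0\tau)\tfrac{\tau^2}{\kappa^2}+\tfrac16 Q'''(s_0\tau)\tfrac{\tau^3}{\kappa^3}=Q\bigl(s_0\tau+\tfrac{\tau}{\kappa}\bigr)$ with $Q(X)=X^3+9X^2+36X+60$, and your monotonicity argument for the positivity of $\omega^2$, the sign of $s_0$, and the conclusion via parts \ref{ItemB} and \ref{ItemC} match the paper's proof.
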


\begin{proof}
By Theorem~\ref{MainTheo}, a real number $s_0$ is a root of maximal multiplicity $6$ of $\Delta$ if and only if
\begin{subequations}
\label{ExplConditionsMaximalMultiplicity-Start}
\begin{align}
2 \omega \zeta + \frac{1}{\kappa} & = - 3 s_{0} - \frac{9}{\tau}, \label{Syst1ZetaOmega} \displaybreak[0] \\
\omega^{2} + \frac{2 \omega \zeta}{\kappa} & = 3 s_{0}^{2} + \frac{18 s_{0}}{\tau} + \frac{36}{\tau^{2}}, \displaybreak[0] \\
\frac{\omega^{2}}{\kappa} & = - s_{0}^{3} - \frac{9 s_{0}^{2}}{\tau} - \frac{36 s_{0}}{\tau^{2}} - \frac{60}{\tau^{3}}, \label{Syst1OmegaSquare} \displaybreak[0] \\
- \frac{\beta_{2} k \omega^{2}}{\kappa} & = \frac{3 e^{s_{0} \tau}}{\tau}, \label{Syst1Beta2} \displaybreak[0] \\
- \frac{\beta_{1} k \omega^{2}}{\kappa} & = \frac{6 \left(- s_{0} \tau + 4\right) e^{s_{0} \tau}}{\tau^{2}}, \displaybreak[0] \\
- \frac{\beta_{0} k \omega^{2}}{\kappa} & = \frac{3 \left(s_{0}^{2} \tau^{2} - 8 s_{0} \tau + 20\right) e^{s_{0} \tau}}{\tau^{3}}. \label{Syst1Beta0}
\end{align}
\end{subequations}
Clearly, \eqref{Syst1Beta2}--\eqref{Syst1Beta0} is equivalent to \eqref{PropBeta0}--\eqref{PropBeta2}. Let us prove that \eqref{Syst1ZetaOmega}--\eqref{Syst1OmegaSquare} is equivalent to \eqref{PropS0}--\eqref{PropZeta}.

Let $Q$ be the polynomial given by $Q(X) = X^3 + 9 X^2 + 36 X + 60$ and notice that $r_0$ is its unique real root. The equations \eqref{Syst1ZetaOmega}--\eqref{Syst1OmegaSquare} can be rewritten as
\begin{subequations}
\label{ApplicationLast3}
\begin{align}
2 \omega \zeta \tau + \frac{\tau}{\kappa} & = - \frac{1}{2} Q^{\prime\prime}(s_0 \tau), \displaybreak[0] \\
\omega^{2} \tau^2 + \frac{2 \omega \zeta}{\kappa} \tau^2 & = Q^\prime(s_0 \tau), \displaybreak[0] \\
\frac{\omega^{2}}{\kappa} \tau^3 & = -Q(s_0 \tau),
\end{align}
\end{subequations}
which is equivalent to
\begin{gather*}
\begin{aligned}
\omega^{2} \tau^2 & = -\frac{\kappa}{\tau} Q(s_0 \tau), \\
2 \omega \zeta \tau & = \frac{\kappa^2}{\tau^2} Q(s_0 \tau) + \frac{\kappa}{\tau} Q^\prime(s_0 \tau),
\end{aligned} \displaybreak[0] \\
\frac{\kappa^3}{\tau^3} Q(s_0 \tau) + \frac{\kappa^2}{\tau^2} Q^\prime(s_0 \tau) + \frac{1}{2} \frac{\kappa}{\tau} Q^{\prime\prime}(s_0 \tau) + \frac{1}{6} Q^{\prime\prime\prime}(s_0 \tau) = 0.
\end{gather*}
Note that the last of the above equalities can be equivalently rewritten as
\begin{equation}
\label{InterestingEquality}
Q(s_0 \tau) + Q^\prime(s_0 \tau) \frac{\tau}{\kappa} + \frac{1}{2} Q^{\prime\prime}(s_0 \tau) \frac{\tau^2}{\kappa^2} + \frac{1}{6} Q^{\prime\prime\prime}(s_0 \tau) \frac{\tau^3}{\kappa^3} = 0.
\end{equation}
The right-hand side of the above equality corresponds to the Taylor expansion of order $3$ of $x \mapsto Q(s_0 \tau + x)$ around the origin and evaluated at $x = \frac{\tau}{\kappa}$. Since $Q$ is a polynomial of degree $3$, this Taylor expansion coincides with $Q\left(s_0 \tau + \frac{\tau}{\kappa}\right)$, and thus \eqref{InterestingEquality} is equivalent to $Q\left(s_0 \tau + \frac{\tau}{\kappa}\right) = 0$. Since the only real root of $Q$ is $r_0$, \eqref{InterestingEquality} is thus equivalent to
\[
s_0 = \frac{r_0}{\tau} - \frac{1}{\kappa}.
\]
Hence \eqref{ApplicationLast3} is equivalent to
\begin{subequations}
\begin{align}
s_0 & = \frac{r_0}{\tau} - \frac{1}{\kappa}, \label{s0} \\
\omega^{2} & = -\frac{\kappa}{\tau^3} Q(s_0 \tau), \label{OmegaSquare} \\
\zeta & = - \frac{3 s_{0}}{2 \omega} - \frac{9}{2 \omega \tau} - \frac{1}{2 \omega \kappa}.
\end{align}
\end{subequations}
One has $Q^\prime(X) = 3 (X^2 + 6 X + 12)$, which is positive for every $X \in \mathbb R$. Hence $Q$ is strictly increasing and, in particular, under \eqref{s0}, one has $Q(s_0 \tau) < Q\left(s_0 \tau + \frac{\tau}{\kappa}\right) = 0$, proving that the right-hand side of \eqref{OmegaSquare} is positive. Thus, if \eqref{s0} is satisfied, there exists a unique $\omega > 0$ such that \eqref{OmegaSquare} holds, and this $\omega$ is precisely the one from \eqref{PropOmega}. This concludes the proof that \eqref{Syst1ZetaOmega}--\eqref{Syst1OmegaSquare} is equivalent to \eqref{PropS0}--\eqref{PropZeta}.

Hence, $s_0$ is a root of maximal multiplicity of $\Delta$ if and only if \eqref{ExplConditionsMaximalMultiplicity} holds. The other assertions of the statement of the proposition follow immediately from \eqref{PropS0}, the fact that $r_0 < 0$, and Theorem~\ref{MainTheo}.
\end{proof}

\begin{remark}
We stress that the proof of Proposition~\ref{PropExpl} guarantees that, under \eqref{PropS0}, the argument of the square root in \eqref{PropOmega} is positive. Thus $\omega > 0$ is well-defined by \eqref{PropOmega}.
\end{remark}

\begin{remark}
For typical values of the parameters $k$, $\kappa$, $\tau_0$, $\zeta$, and $\omega$ (see \cite{Armstrong1981Application, Manitius1984Feedback}) and under no control, the dynamics of the deviations $\vartheta$ of the guide vane angle are considerably faster than those of the deviations $m$ of the Mach number, and thus one may expect \eqref{EqApplication} to be approximated by the first-order differential equation
\begin{equation}
\label{SimplifiedM}
\kappa m^\prime(t) + m(t) = 0.
\end{equation}
The characteristic polynomial of this equation admits a unique root at $s^*=-\frac{1}{\kappa}$. Since $r_0 < 0$, it follows from \eqref{PropS0} that the strictly dominant root $s_0$ of the quasipolynomial $\Delta$ from \eqref{ApplicationDelta} satisfies $s_0 < s^*$. Hence, the deviation of the Mach number $m$ has a faster decay rate in \eqref{EqApplication} with the control law \eqref{ApplicationControlLaw} than in the simplified open-loop system \eqref{SimplifiedM}.
\end{remark}

Thanks to Proposition~\ref{PropExpl}, given $k$, $\kappa$, $\tau_0$, and $\tau_1$, one may easily compute from \eqref{ExplConditionsMaximalMultiplicity} the unique possible real root of multiplicity $6$ of $\Delta$, $s_0$, as well as the parameters $\zeta$ and $\omega$ of the guide vane angle actuator and the parameters $\beta_0$, $\beta_1$, and $\beta_2$ of the control law \eqref{ApplicationControlLaw} ensuring that $s_0$ is indeed a root of multiplicity $6$ of the quasipolynomial $\Delta$.

As a numerical example, we consider the same system parameters as in \cite{Manitius1984Feedback}, given by $\kappa = \SI{1.964}{\second}$, $k = \SI{-0.67036}{\per\radian}$, and $\tau_0 = \SI{.33}{\second}$, corresponding to a linearization around the steady state with Mach number $\num{0.9}$ and air temperature $\SI{166}{\kelvin}$. The parameters $\zeta$, $\omega$, $\beta_2$, $\beta_1$, and $\beta_0$ computed according to \eqref{ExplConditionsMaximalMultiplicity} for $\tau_1 = \SI{0.33}{\second}$ and $\tau_1 = \SI{0.70}{\second}$ are provided in Table~\ref{TabParams}, together with the corresponding multiple root $s_0$.

\begin{table}[pht]
\centering
\caption{Parameters of the guide vane angle actuator and of the control law \eqref{ApplicationControlLaw} and location $s_0$ of the multiple root computed according to \eqref{ExplConditionsMaximalMultiplicity} for two different values of the delay $\tau_1$.}
\label{TabParams}
\begin{tabular}{ccccccc}
\toprule
$\tau_1$ [\si{\second}] & $s_0$ [\si{\per\second}] & $\zeta$ & $\omega$ [\si{\radian\per\second}] & $\beta_0$ & $\beta_1$ [\si{\second}] & $\beta_2$ [\si{\square\second}] \tabularnewline
\midrule
$\num{0.33}$ & $\num{-6.021}$ & $\num{0.3902}$ & $\num{5.020}$ & $\num{1.542}$ & $\num{0.2401}$ & $\num{0.00994}$ \tabularnewline
$\num{0.70}$ & $\num{-4.041}$ & $\num{0.4368}$ & $\num{3.292}$ & $\num{0.8161}$ & $\num{0.1943}$ & $\num{0.01226}$ \tabularnewline
\bottomrule
\end{tabular}
\end{table}

\begin{figure}[pht]
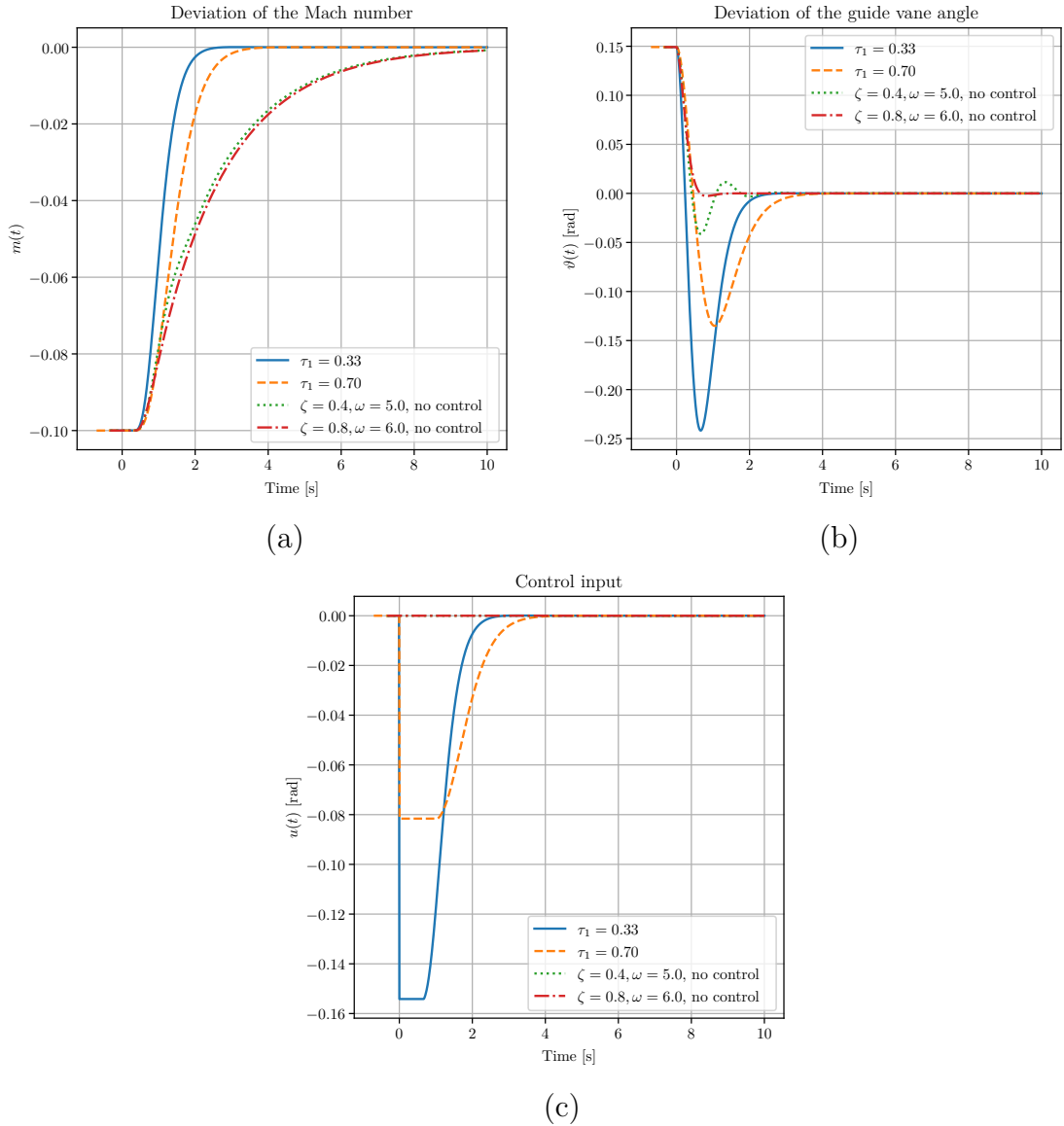

\centering

\begin{tabular}{@{} >{\centering} m{0.5\textwidth} @{} >{\centering} m{0.5\textwidth} @{}}
\resizebox{0.5\textwidth}{!}{\input{Figures/mach.pgf}} & \resizebox{0.5\textwidth}{!}{\input{Figures/angle.pgf}} \tabularnewline
(a) & (b) \tabularnewline
\end{tabular}

\resizebox{0.5\textwidth}{!}{\input{Figures/control.pgf}}

(c)
\caption{(a) Deviation $m(t)$ of the Mach number with respect to its steady-state value. (b) Deviation $\vartheta(t)$ of the guide vane angle with respect to its steady-state position. (c) Control input $u(t)$.}
\label{FigApplication}
\end{figure}

Figure~\ref{FigApplication} provides the evolution of the deviation $m(t)$ of the Mach number with respect to its steady-state value, the deviation $\vartheta(t)$ of the guide vane with respect to its steady-state position, and the control input $u(t)$ in the case where the initial conditions are given by $m(t) = \num{-0.1}$, $\vartheta(t) = \frac{m(0)}{k} \approx \SI{0.14917}{\radian}$, and $\vartheta^\prime(t) = 0$ for $t \leq 0$. Notice that this choice of initial conditions, which is the same as in \cite{Manitius1984Feedback}, ensures that $m^\prime(t) = 0$ and $m(t) = \num{-0.1}$ for $t \in (0, \tau_0)$.

Together with the solutions of \eqref{EqApplication} under the control law \eqref{ApplicationControlLaw} with the parameters given in Table~\ref{TabParams} computed with $\tau_1 = \SI{0.33}{\second}$ (solid line) and $\tau_1 = \SI{0.70}{\second}$ (dashed line), we also represent in Figure~\ref{FigApplication}, for comparison, the solutions of \eqref{EqApplication} with no control for two sets of parameters $\zeta$ and $\omega$, $\zeta = \num{0.4}$ and $\omega = \SI{5}{\radian\per\second}$ (dotted line), and $\zeta = {0.8}$ and $\omega = \SI{6}{\radian\per\second}$ (dash-dotted line). The first set of parameters $\zeta$ and $\omega$ is an approximation of the ones from the first row of Table~\ref{TabParams}, while the second set of parameters $\zeta$ and $\omega$ is the one used in \cite{Manitius1984Feedback}.

Figure~\ref{FigApplication} shows that the control law \eqref{ApplicationControlLaw} acting on the guide vane angle dynamics from \eqref{EqApplication} allows one to improve the convergence rate of the deviations $m$ of the Mach number of the flow, as required. As previously remarked, one observes in Figure~\ref{FigApplication} that \eqref{EqApplication} is already exponentially stable under no control, but $m$ converges faster to $0$ with the control law \eqref{ApplicationControlLaw}. The convergence rate of $m$ increases as $\tau_1$ decreases since, as seen in \eqref{PropS0} and illustrated in Table~\ref{TabParams}, the dominant root $s_0$ of $\Delta$ moves to the left in the complex plane as $\tau_1$ decreases.

Since the control $u$ acts on the dynamics of the guide vane angle deviations $\vartheta$ in \eqref{EqApplication}, one might expect important differences in the dynamics of $\vartheta$ with respect to the situation with no control, which is indeed observed in Figure~\ref{FigApplication}(b). The magnitude of the deviations $\vartheta$ of the guide vane angle increases when the control $u$ from \eqref{ApplicationControlLaw} is applied, as one may see comparing the solid line in Figure~\ref{FigApplication}(b), corresponding to the control \eqref{ApplicationControlLaw} computed with $\tau_1 = \SI{0.33}{\second}$, and the dotted line in the same figure, corresponding to the case with no control and with the choice of parameters $\zeta = \num{0.4}$ and $\omega = \SI{5}{\radian\per\second}$, close to the ones obtained with $\tau_1 = \SI{0.33}{\second}$ in Table~\ref{TabParams}. We also observe, in Figure~\ref{FigApplication}(c), that, as $\tau_1$ decreases, the amplitude of the control input $u$ increases.

In conclusion, the control law \eqref{ApplicationControlLaw}, based on Theorem~\ref{MainTheo}, allows one to improve the convergence rate of deviations of the Mach number in the linearized model of a transonic flow. This control law only requires delayed measures of the Mach flow deviations $m$ and improves the convergence rate of $m$ even when the measurement delay is relatively large. However, one of its major drawbacks is that it also requires measurements of the derivatives $m^{\prime}$ and $m^{\prime\prime}$, which may introduce noise in practical applications.

An interesting open problem is whether similar techniques can be applied for other kinds of control laws, easier to implement in practice and avoiding the noisy derivative terms. A natural candidate would be to consider control laws based on measurements of $m$ with a certain number of different delays, which is motivated by discrete approximations of the derivatives in \eqref{ApplicationControlLaw}. This would require an extension of the multiplicity-induced-dominancy techniques developed in this paper to equations more general than \eqref{MainSystTime}, involving several delays.

\subsection{Partial Pole Placement via Delay Action}

Based on the main result on this paper, Theorem~\ref{MainTheo}, as well as on other recent results on the multiplicity-induced-dominancy property for systems with time-delays such as \cite{Boussaada2020Multiplicity, Boussaada2018Further, Boussaada2016Multiplicity}, a Python software for the parametric design of stabilizing feedback laws with time-delays, called ``Partial Pole Placement via Delay Action'' (P3$\delta$ for short), has been developed.

Concerning Theorem~\ref{MainTheo}, given $n \in \mathbb N$, $\tau > 0$, and $s_0 \in \mathbb R$, P3$\delta$ computes the coefficients $a_0, \dotsc, a_{n-1}, \alpha_0, \dotsc, \alpha_{n-1}$ from \eqref{Coeffs} ensuring that $s_0$ is a root of maximal multiplicity $2n$ of the quasipolynomial $\Delta$ from \eqref{Delta}, plots the corresponding roots of $\Delta$ on any rectangle of the complex plane selected by the user, and traces the solution of \eqref{MainSystTime} for a given initial condition selected by the user from a family of initial conditions. P3$\delta$ also implements other features, which are detailed in \cite{BoussaadaPartial}. The software is freely available for download on \url{https://cutt.ly/p3delta}, where installation instructions, video demonstrations, and the user guide are also available.

\section{Concluding remarks}

In this paper we further investigated the recently emphasized property called multiplicity-induced-dominancy for single-delay retarded delay-differential equations. Thanks to the reduction of the corresponding characteristic function into an integral representation, we have shown that characteristic spectral values of maximal multiplicity are necessarily dominant for retarded delay-differential equations of arbitrary order. 
Finally, to demonstrate the applicability of such a property, illustrative examples were explored.

\section*{Acknowledgments}

This work is supported by a public grant overseen by the French National Research Agency (ANR) as part of the ``Investissement d'Avenir'' program, through the iCODE project funded by the IDEX Paris-Saclay, ANR-11-IDEX-0003-02.

\bibliographystyle{abbrv}
\bibliography{Bib}

\end{document}